\documentclass[11pt]{article}

%%%%%%%%%%%%%%%%%%%%%%%%%%%%%%%%%%%%%%%%%%%%%%%%%%%%%%%%%%%%%%%%%%%%%%%%%%%
%
%										PREAMBLE
%
%%%%%%%%%%%%%%%%%%%%%%%%%%%%%%%%%%%%%%%%%%%%%%%%%%%%%%%%%%%%%%%%%%%%%%%%%%%

\usepackage[utf8]{inputenc}
\usepackage[T1]{fontenc}
\usepackage{lmodern}
\usepackage[a4paper,top=3cm, bottom=3cm, left=2.5cm, right=2.5cm]{geometry} %
\usepackage{amsmath,amsthm,amsfonts,amssymb, stmaryrd,bbm,mathrsfs, dsfont}
\usepackage{mathtools}
\usepackage{graphicx}
\usepackage{enumitem}
\usepackage{url}
\usepackage{appendix}
\usepackage{enumitem} 
\newcounter{dummy}
\makeatletter
\newcommand\myitem[1][]{\item[#1]\refstepcounter{dummy}\def\@currentlabel{#1}}
\makeatother % give labels to items

\usepackage[english]{babel}

\usepackage[usenames,dvipsnames]{xcolor}
\usepackage[labelfont={bf,sf}, textfont={sf}]{caption}
\usepackage{subfigure}
\usepackage
[final]
{changes}

\definecolor{darkblue}{rgb}{0.0, 0.2, 0.6}
\usepackage[pdftitle={},
  pdfauthor={},
colorlinks=true,linkcolor=RoyalBlue,urlcolor=MidnightBlue,citecolor=darkblue,bookmarks=true,bookmarksopen=true,bookmarksopenlevel=2,unicode=true,linktocpage]{hyperref}

%Black and white version
%\usepackage[pdftitle={},
%  pdfauthor={},
%colorlinks=true,linkcolor=black,urlcolor=black,citecolor=black,bookmarks=true,bookmarksopen=true,bookmarksopenlevel=2,unicode=true,linktocpage]{hyperref}

%Equation numbers by section
\numberwithin{equation}{section}

\usepackage[capitalise,noabbrev]{cleveref}

%%%%%%%%%%%%%%%%%%%%%%%%%%%%%%%%%%%%%%%%%%%%%%%%%%%%%%%%%%%%%%%%%%%%%%%%%%%%%%

\makeatletter
\renewenvironment{abstract}{%
	\small
	\begin{center}%
		{\bfseries Abstract\vspace{-.5em}\vspace{\z@}}%
	\end{center}%
	\quotation}
{\endquotation}
\makeatother

%%%%%%%%%%%%%%%%%%%%%%%%%%%%%%%%%%%%%%%%%%%%%%%%%%%%%%%%%%%%%%%%%%%%%%%%%%%%%%
%
%												MACROS
%
%%%%%%%%%%%%%%%%%%%%%%%%%%%%%%%%%%%%%%%%%%%%%%%%%%%%%%%%%%%%%%%%%%%%%%%%%%%%%%

%---------------------------------------------------------------------------------------
% DEFINITIONS
%---------------------------------------------------------------------------------------

%Comments

%Usual
\newcommand\bb{\mathbb}
\renewcommand\cal{\mathcal}

\renewcommand\frak{\mathfrak}
\renewcommand\bf{\boldsymbol}
\renewcommand\tt{\mathtt}

\newcommand\eps{\varepsilon}

%Probabilities and measures

%LQG
\newcommand{\cle}{\mathrm{CLE}}

%Planar maps
\newcommand{\Deg}{\mathsf{Deg}}

%Functions

\newcommand{\abs}[1]{\left\lvert#1\right\rvert}

%----------------------------------------------------------------------------------------
% Environnements des théorèmes
%----------------------------------------------------------------------------------------

\theoremstyle{plain}
\newtheorem{thm}{Theorem}[section]
\newtheorem*{thm*}{Theorem}
\newtheorem{prop}[thm]{Proposition}
\newtheorem*{prop*}{Proposition}

\newtheorem{defn}[thm]{Definition}
\newtheorem*{Def*}{Definition}
\newtheorem{lem}[thm]{Lemma}
\newtheorem*{Cor*}{Corollary}
\newtheorem{Cor}[thm]{Corollary}

\newtheorem{Thmx}{Theorem}
 % "letter-numbered" theorems

\theoremstyle{definition}

\newtheorem*{Ex*}{Example}
\newtheorem{rem}[thm]{Remark}
\newtheorem*{rem*}{Remarques}

%TITLE & AUTHOR
\title{\Large{\textbf{The scaling limit of the volume of loop--$O(n)$ quadrangulations}}}
\author{Élie Aïdékon\footnote{\scriptsize SMS, Fudan University, China, \texttt{aidekon@fudan.edu.cn}} \and William Da Silva\footnote{\scriptsize University of Vienna, Austria, \texttt{william.da.silva@univie.ac.at}} \and Xingjian Hu\footnote{\scriptsize SMS, Fudan University, China, \texttt{22110180020@m.fudan.edu.cn}}}
\date{}

%%%%%%%%%%%%%%%%%%%%%%%%%%%%%%%%%%%%%%%%%%%%%%%%%%%%%%%%%%%%%%%%%%%%%%%%%%%
\begin{document}
%%%%%%%%%%%%%%%%%%%%%%%%%%%%%%%%%%%%%%%%%%%%%%%%%%%%%%%%%%%%%%%%%%%%%%%%%%%	

\maketitle

\begin{abstract}
We study the volume of rigid loop--$O(n)$ quadrangulations with a boundary of length $2p$ in the non-generic critical regime, {for all $n\in (0,2]$}. We prove that, as the half-perimeter $p$ goes to infinity, the volume scales in distribution to an explicit random variable. This limiting random variable is described in terms of the multiplicative cascades of Chen, Curien and Maillard \cite{chen2020perimeter}, or alternatively (in the dilute case) as the law of the area of a unit-boundary $\gamma$--quantum disc, as determined by Ang and Gwynne \cite{ang2021liouville}, for suitable $\gamma$. 

Our arguments go through a classification of the map into several regions, where we rule out the contribution of bad regions to be left with a tractable portion of the map. One key observable for this classification is a Markov chain which explores the nested loops around a size-biased vertex pick in the map, making explicit the spinal structure of the discrete multiplicative cascade.

{We stress that our techniques enable us to include the boundary case $n=2$, that we define rigorously, and where the nested cascade structure is that of a critical branching random walk. In that case the scaling limit is given by the limit of the derivative martingale and is inverse-exponentially distributed, which answers a conjecture of \cite{aidekon2022growth}.}
\end{abstract}

\tableofcontents

%---------------------------------------------------------------------------------------------------------------%
%
%					ACKNOWLEDGMENT
%
%---------------------------------------------------------------------------------------------------------------%

\bigskip
\noindent 
\textbf{Acknowledgments.} We thank Nicolas Curien for enlightening discussions on loop--$O(n)$ planar maps and for sharing with us the general argument leading to the proof of Theorem \ref{thm: main} assuming uniform integrability. We also thank Joonas Turunen for discussions pertaining to the phase diagram of the $O(2)$ model and Emmanuel Kammerer for sharing with us his recent work on that phase diagram in the case of bipartite maps. 
{ Finally, we are grateful to two anonymous referees for their very careful reading and detailed comments, which substantially improved the presentation.}
E.A. was supported by NSFC grant QXH1411004. W.D.S. acknowledges the support of the Austrian Science Fund (FWF) grant on  “Emergent branching structures in random geometry” (DOI: 10.55776/ESP534).

%---------------------------------------------------------------------------------------------------------------%
%
%					INTRO
%
%---------------------------------------------------------------------------------------------------------------%

\section{Introduction}
\label{sec: intro}

Loop--$O(n)$ planar maps form one of the classical models of statistical physics. They consist in sampling a planar map together with a collection of self-avoiding and non-touching nested loops. Upon driving the parameters of the model to criticality, it is predicted by physics { \cite{kostov1989n,eynard1992n,kostov1992multicritical,di19952d,eynard1995exact}} that a variety of universality classes of \emph{two-dimensional quantum gravity} may be reached in a suitable scaling limit of the maps. One key observable for the geometric features of these  maps is their \textbf{volume}, defined as the number of vertices.

In this paper, we investigate the question of the volume for the \textbf{rigid model on quadrangulations} in the non-generic critical regime, in the framework of Borot, Bouttier and Guitter \cite{borot2011recursive}. Precisely, we prove that the volume of these quadrangulations has an explicit scaling limit as their perimeter goes to infinity. The limit is described as the limit of the Malthusian martingale of the Chen--Curien--Maillard multiplicative cascade \cite{chen2020perimeter}. In the dilute regime, it also matches for suitable $\gamma$ the area of a unit-boundary $\gamma$--quantum disc, as determined by Ang and Gwynne \cite{ang2021liouville}. Our proof builds on the gasket decomposition of Borot, Bouttier and Guitter \cite{borot2011recursive} and the multiplicative cascades of Chen, Curien and Maillard \cite{chen2020perimeter}.

%---------------------------------------------------------------------------------------------------------------%
%		              THE O(n) MODEL
%---------------------------------------------------------------------------------------------------------------%
\subsection{Rigid loop--$O(n)$ quadrangulations}

\medskip
\noindent \textbf{Definition of the loop model.}
\added{To introduce the model, we follow the exposition given in \cite{chen2020perimeter}.} All planar maps considered in this paper are rooted, \added{\textit{i.e.} \ come with a distinguished oriented root edge}. The root face (or external face) $\frak f_r$ is defined to be the face of the map lying to the right of the oriented root edge. The other faces are called internal faces. %{A quadrangulation with a boundary, denoted by $\frak q$, } is a planar map whose internal faces are of degree $4$, while the external face may have a different (even) degree, called the \textbf{perimeter} of $\frak q$.
{We say that a planar map $\frak q$ is a \textbf{quadrangulation with a boundary} if all its internal faces are of degree $4$, while the external face may have a different (even) degree, called the \textbf{perimeter} of $\frak q$.}
A (rigid) loop configuration on $\frak q$ is a collection \added{$\bf \ell := \{\ell_1, \ell_2, \cdots, \ell_k\}$} of nested disjoint and self-avoiding loops in the dual map of $\frak q$ (\textit{i.e.} crossing faces of $\frak q$), avoiding the external face $\frak f_r$, and with the rigidity constraint that loops must exit faces through opposite edges. For $p\ge 0$, let $\cal O_p$ be the set of loop-decorated quadrangulations $(\frak q, \bf \ell)$ with perimeter $2p$.

\bigskip
\begin{figure}[h]
\begin{center}
\includegraphics[scale=0.65]{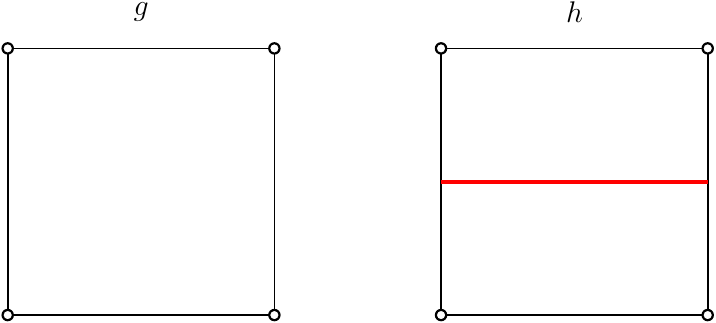}
\end{center}
\caption{The two types of faces of a loop-decorated quadrangulation $(\frak q,\bf \ell)$, with corresponding weights $g$ and $h$. Each loop receives additional (global) weight $n$.}
\label{fig: faces loop-decorated quadrangulation}
\end{figure}

The rigidity condition imposes that faces are of two possible types: either they are empty, or crossed by a loop through opposite edges (see \cref{fig: faces loop-decorated quadrangulation}). We now introduce the loop--$O(n)$ measure, depending on parameters $g,h\ge 0$ and $n\in (0,2]$. Each quadrangle receives local weight $g\ge 0$ or $h\ge 0$ according to its type (empty or crossed), whereas each loop gets an extra global weight $n$, so that the total weight of a loop-decorated quadrangulation $(\frak q,\bf\ell)$ is
\begin{equation} \label{eq: weights O(n)}
\mathsf{w}_{(n;g,h)}(\frak q,\bf\ell) := g^{|\frak q| -|\bf \ell |} h^{|\bf \ell |} n^{\#\bf\ell},
\end{equation}
where $|\frak q|$ is the number of internal faces of $\frak q$, $|\bf \ell |$ is the total length (\textit{i.e.} number of faces crossed) of the loops in $\bf\ell$, and $\#\bf\ell$ is the number of loops. When the partition function
\begin{equation} \label{eq: partition function O(n)}
F_p(n;g,h) := \sum_{(\frak q,\bf\ell)\in \cal{O}_p} \mathsf{w}_{(n;g,h)}(\frak q,\bf\ell),
\end{equation}
is finite, we say that $(n;g,h)$ is \emph{admissible}, and we introduce the loop--$O(n)$ probability measure
\[
\bb P^{(p)}_{(n;g,h)}(\cdot) := \frac{\mathsf{w}_{(n;g,h)}(\cdot)}{F_p(n;g,h)},
\]
on the set $\cal O_p$, associated with the weights \eqref{eq: weights O(n)}. In the sequel we shall often drop the subscript and write $\bb P^{(p)}$ since the weight sequence will be fixed once and for all. As an illustration, we represented in \cref{fig: loop-decorated quadrangulation} a loop-decorated quadrangulation with half-perimeter $p=20$.

\bigskip
\begin{figure}[h]
\begin{center}
\includegraphics[scale=0.7]{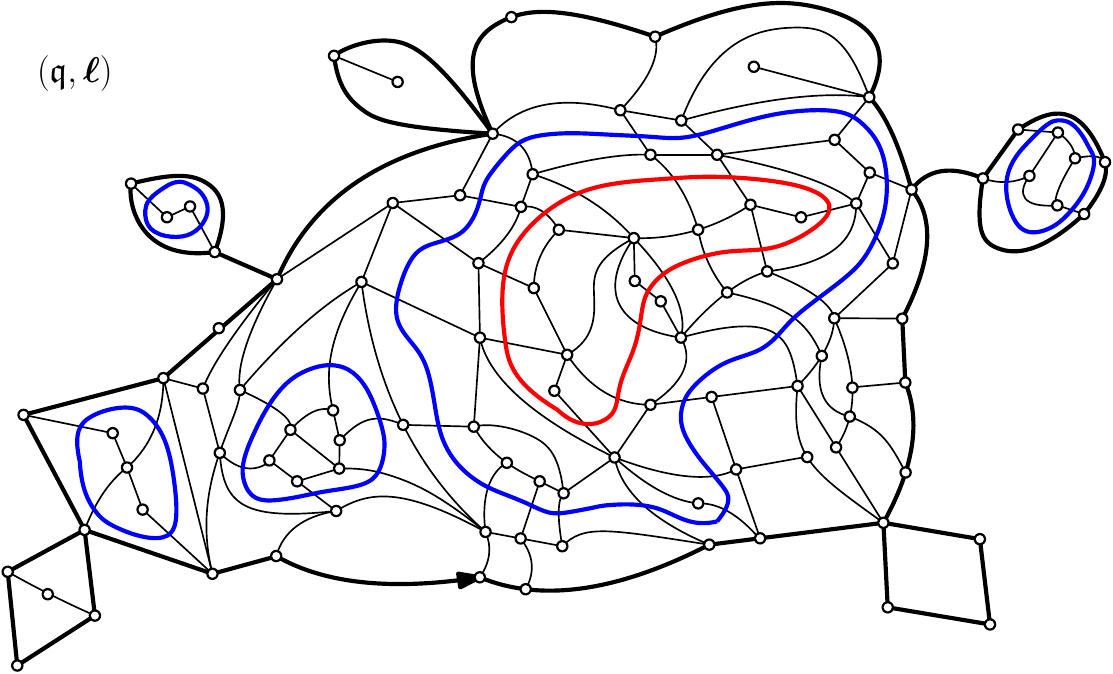}
\end{center}
\caption{A loop-decorated quadrangulation $(\frak q,\bf \ell)$. The boundary of the map is shown in bold, with a distinguished oriented root edge. The outermost loops are shown in blue, one interior loop is shown in red.}
\label{fig: loop-decorated quadrangulation}
\end{figure}

\bigskip
\noindent \textbf{Phase diagram.} In the case $n\in (0,2)$, Borot, Bouttier and Guitter \cite{borot2011recursive} used the gasket decomposition to classify the parameters into three phases (\emph{subcritical}, \emph{generic critical}, \emph{non-generic critical}) where the model exhibits very different large-scale geometry features (see \cref{fig: O(n) phase diagram}). Subcritical planar maps are expected to scale to Aldous’ continuum random tree (CRT). The generic critical regime is expected to lead to the same geometry as for regular quadrangulations, with scaling to the Brownian disk \cite{bettinelli2017compact}. Our paper deals with the \textbf{non-generic critical phase}. In this case the geometry is still not well understood, see \cite{legall2011scaling} for information on the gaskets of the maps. 
When $n\in (0,2)$, this regime is defined by the equations
\begin{equation}\label{non-generic line}
g  = \frac{3}{2+b^2}\bigg(h - \frac{2-n}{2b^2}h^2\bigg), \qquad
g  \leq \frac{3h}{2(b^2-2b+3)},
\end{equation}
where  $b:=\frac1\pi \arccos \frac{n}2$ (see \cite[Equations (6.15) and (6.17)]{borot2011recursive}). The phase is called \textbf{dense} when the above inequality is strict, and \textbf{dilute} when it is an equality (see \cref{fig: O(n) phase diagram}). {It is believed that in each phase respectively, loops are simple and mutually avoiding or touch themselves and each other, in a suitable scaling limit. For two functions $f$ and $g$ defined on an unbounded set $D \subset \bb R_+$, we write $f(x) \sim g(x)$ as $x\to\infty$ if the ratio $f/g$ tends to $1$ as $x\to\infty$ (with $x\in D$).} The partition function \eqref{eq: partition function O(n)} satisfies 
\begin{equation}\label{eq: asymptotics partition function}
F_p(n;g,h) \sim C h^{-p} p^{-\alpha-1/2}, \quad \text{as } p\to \infty,
\end{equation}
for some $C>0$, and
\begin{equation} \label{eq: relation alpha and n}
\alpha := \frac32 \pm \frac{1}{\pi} \arccos(n/2) \in (1,2) \setminus \Big\{\frac32\Big\},
\end{equation}

\noindent the signs $+$ and $-$ corresponding respectively to the dilute and dense phase. 
Much more is known on the geometry of Boltzmann planar maps \cite{budd2016peeling,budd2017geometry}, which describe the gasket of loop--$O(n)$ quadrangulations -- see \cref{sec: gasket decomposition} for a definition.  One key observable in the description of the intricate geometry of these maps is the so-called peeling exploration, which was introduced by Watabiki for triangulations \cite{watabiki1995construction} (see also \cite{angel2003growth,curien2017scaling}), extended to Boltzmann maps by Budd \cite{budd2016peeling} and describes a Markovian exploration of the planar map obtained by discovering the faces given by gradually peeling the edges of the map. 

The case $n=2$ is a boundary case. To the best of our knowledge, the phase diagram of this case has so far not been covered in the mathematical literature. Still, in \cref{s:partition function O(2)}, we show that the regime 
\begin{equation}\label{n=2 line}
g = \frac{3}{2}\bigg(h - \frac{\pi^2}{2}h^2\bigg), \quad g\leq \frac{h}{2},
\end{equation}

\noindent gives rise to a well-defined non-generic critical loop--$O(2)$ quadrangulation. Notice that \eqref{n=2 line} is simply the limit as $n\to 2$ of \eqref{non-generic line}. The partition function \eqref{eq: partition function O(n)}  satisfies 
\begin{equation}\label{asymptotics}
F_p(2;g,h) \sim\begin{cases}
 {C} h^{-p} p^{-2}, & \text{if } g=\frac{h}{2}, \\
 {C} h^{-p} p^{-2}\ln p, & \text{if }g<\frac{h}{2}.
\end{cases}
\end{equation}

\noindent {We refrain from using the terms \textit{dense} and \textit{dilute} to distinguish the phases $g=h/2$ and $g<h/2$ as we expect that in both cases, after suitable embedding and under appropriate (possibly different) scaling, the model converges to $\cle_4$ on critical Liouville quantum gravity.} 
In a paper that appeared on the same day on the arXiv, Kammerer used a more systematic approach to establish asymptotics for $F_p$ for the rigid loop--$O(2)$ model on bipartite planar maps (not necessarily quadrangulations). These asymptotics {involve, in general, a} slowly varying function $L(p)$ which is between a constant and $\ln(p)$. We still decided to include \eqref{n=2 line} and \eqref{asymptotics} in the appendix since we follow a more elementary approach by taking limits (that is specific to our model).

There has been very little work on the loop--$O(2)$ model. First, we mention that in \cite{budd2018peeling} and \cite{KammererEmmanuel2023Ol3m}, Boltzmann maps with face degree decay of order $k^{-2}$ are discussed, which apply to the gasket of loop--$O(2)$ quadrangulations when $g=\frac{h}{2}$ in our setting. Furthermore, Kammerer proved in particular in \cite{KammererEmmanuel2023Dotm} a scaling limit for some distances from the loops to the boundary, expressed in terms of $\cle_4$ on the critical quantum disc.

\begin{figure}[h]
\medskip
	\begin{center}
		\includegraphics[width=0.75\textwidth]{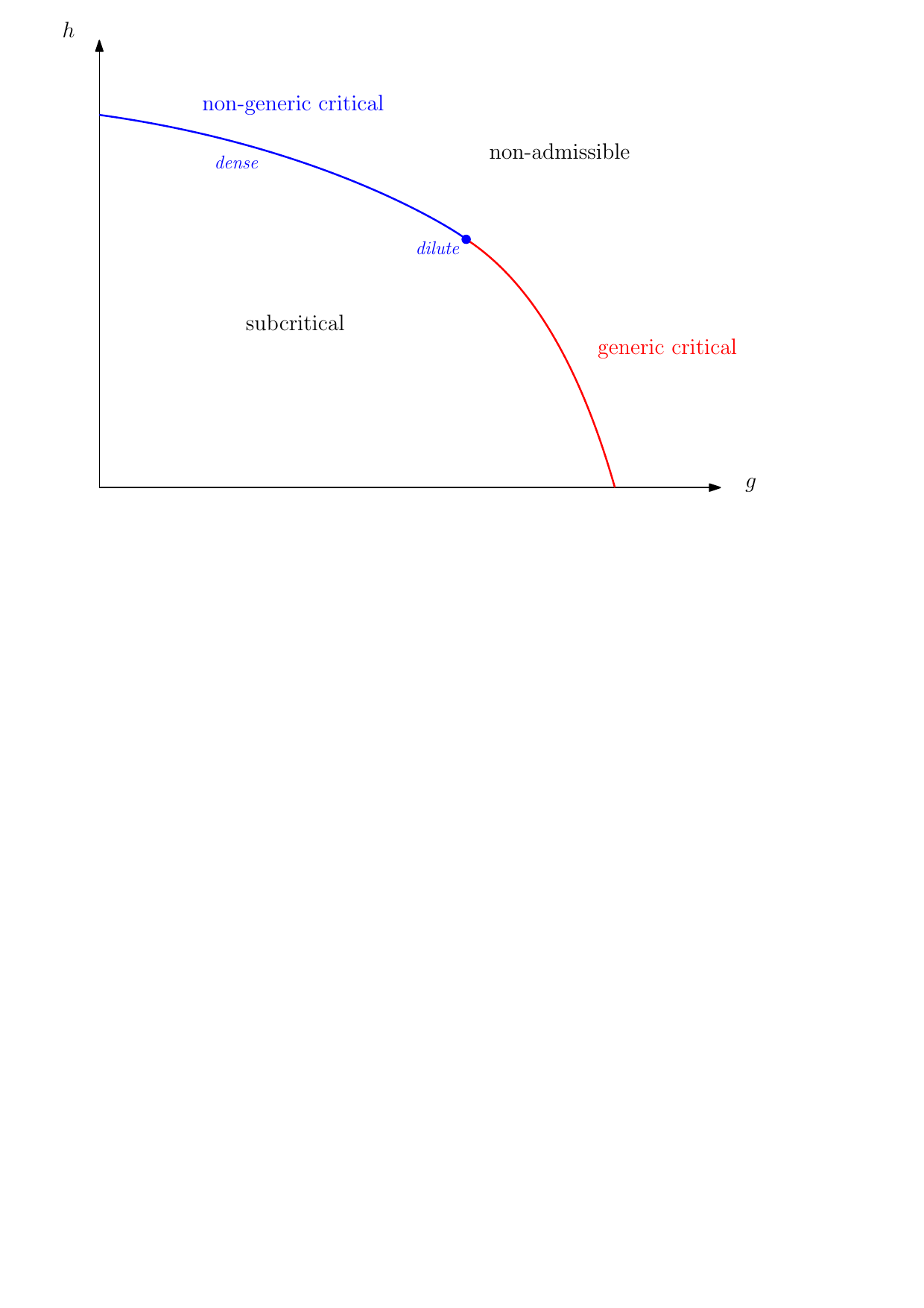}
	\end{center}
		\caption{Phase diagram of the $O(n)$ model on quadrangulations \cite{borot2011recursive,budd2018peeling} for fixed $n\in(0,2)$ (the diagram looks roughly the same for all $n$). The critical line separates the \emph{non-admissible} region (where the partition function \eqref{eq: partition function O(n)} blows up) from the subcritical regime, where the maps are believed to yield trees in the limit. On the critical line, interesting behaviours are expected, with convergence either to the Brownian disk in the generic critical regime (red), or other objects in the non-generic critical regime (blue). The latter can be further split into dense (blue line) or dilute (blue point) phases, where $\alpha := \frac32 - \frac{1}{\pi} \arccos(n/2)$ or $\alpha := \frac32 + \frac{1}{\pi} \arccos(n/2)$ \added{in \eqref{eq: relation alpha and n}} respectively.}
		\label{fig: O(n) phase diagram}
\end{figure}

\bigskip

\fbox{
\begin{minipage}{0.9\textwidth}
%\smallskip
\emph{From now on, we fix a set of parameters $(n;g,h)$ in the \textbf{non-generic critical region}, \emph{i.e.}\ satisfying either \eqref{non-generic line} when $n\in (0,2)$ or \eqref{n=2 line} when $n=2$. The index $\alpha$ is then fixed as in \eqref{eq: relation alpha and n}, setting $\alpha=\frac32$ when $n=2$.}
\smallskip
\end{minipage}
}

%---------------------------------------------------------------------------------------------------------------%
%		              THE O(n) MODEL
%---------------------------------------------------------------------------------------------------------------%
\subsection{Previously known results on the volume}
\label{sec: intro prev known}

\noindent \textbf{Volume of Boltzmann maps.} The volume of a planar map, defined as its number of vertices, plays a central role in the development of planar maps, both from the combinatorial and probabilistic perspectives. In the 1960s, Tutte \cite{tutte1962census} first enumerated planar triangulations or quadrangulations of a polygon with fixed volume, relying on a recursive procedure known as Tutte's equation. 

In the case of loop--$O(n)$ quadrangulations {($n\in(0,2)$)}, the volume of the gasket is fairly well understood (recall that the gasket corresponds to a Boltzmann planar map -- see \cref{sec: gasket decomposition} for more details). In fact, the scaling limit of the volume of Boltzmann planar maps as the perimeter goes to infinity was established by Budd \cite{budd2016peeling}, and Budd and Curien \cite{budd2017geometry}. To clarify, let us consider under $\bb P$ a Boltzmann planar map $\frak B^{(p)}_{\hat{\bf{g}}}$ with non-generic critical weight sequence $\hat{\bf{g}}$ (as made explicit by \eqref{non-generic line}--\eqref{n=2 line} and \eqref{eq: fixed point equation}), and fixed perimeter $2p$. Then the \emph{expectation} of the volume $V(\frak B^{(p)}_{\hat{\bf{g}}})$ of $\frak B^{(p)}_{\hat{\bf{g}}}$ is easily determined as a ratio of (pointed/non-pointed) Boltzmann partition functions.\footnote{{Similarly to \eqref{eq: partition function O(n)}, the (non-pointed) Boltzmann partition function is defined by summing the corresponding Boltzmann weights over all possible bipartite maps $\mathfrak{m}$ with perimeter $2p$. The \emph{pointed} Boltzmann partition function corresponds to summing the same weights over all pairs $(\mathfrak{m}, v)$, where $\mathfrak{m}$ is as above and $v$ is a vertex of $\mathfrak{m}$.}}
The analysis of these partition functions yields the {following} estimate: {for some constant $\Gamma>0$ depending on the weight sequence, and with $\alpha$ as in \eqref{eq: relation alpha and n},}
\begin{equation} \label{eq: mean volume boltzmann}
\bb E \big[V(\frak B^{(p)}_{\hat{\bf{g}}})\big] \sim \Gamma p^{\alpha}, \quad \text{as } p\to\infty.
\end{equation}
Budd and Curien further proved \cite[Proposition 3.4]{budd2017geometry} that, as a matter of fact,
\begin{equation} \label{eq: scaling volume boltzmann}
p^{-\alpha}V(\frak B^{(p)}_{\hat{\bf{g}}}) \overset{(\mathrm{d})}{\longrightarrow} V_\infty, \quad \text{as } p\to \infty,
\end{equation}
for some explicit random variable $V_{\infty}$. The proof expresses the Laplace transform of the volume in terms of Boltzmann partition functions, carrying out the analysis on these (pointed/non-pointed) partition functions. 

Much less is known about the volume of the loop--$O(n)$ quadrangulations themselves. One reason is that similar techniques do not apply since, contrary to Boltzmann maps, there does not seem to be any tractable expression of the pointed $O(n)$ partition function. Instead, Budd \cite{budd2018peeling} takes a different, more probabilistic, route going through \emph{ricocheted random walks} to work out the asymptotics of the pointed $O(n)$ partition function in the case $n\in (0,2)$. His result in \cite[Proposition 9]{budd2018peeling} is the following. Let $(\frak q, \bf \ell)$ be a loop--$O(n)$ quadrangulation under $\bb P^{(p)}_{(n;g,h)}$, with $n\in (0, 2)$ (recall that we fixed a non-generic critical set of parameters $(n;g,h)$). Then {there exists a constant $\Lambda>0$ such that} the expected volume satisfies
\begin{equation} \label{eq: mean volume}
\overline{V}(p) := \bb E^{(p)}_{(n;g,h)} [V(\frak q)] \sim \Lambda p^{\theta_\alpha}, \quad \text{as } p\to \infty,
\end{equation}
where
\begin{equation} \label{eq: def theta_alpha}
\theta_\alpha := \min(2, 2\alpha-1).
\end{equation}
Note that $\theta_\alpha$ separates the dilute and dense phases described after \eqref{eq: relation alpha and n}, in the sense that in the dilute phase $\theta_\alpha = 2$, whereas in the dense phase $\theta_\alpha = 2\alpha-1$. By convention, we also set $\overline{V}(0) = 0$.

In the case $n=2$, we will establish in \cref{s:construction O(2)} the following volume estimates. For the gasket viewed as a Boltzmann map $\frak B^{(p)}_{\hat{\bf{g}}}$ with weight sequence $\hat{\bf{g}}$ as in \cref{eq: fixed point equation}, 
\begin{equation}\label{eq: mean volume boltzmann n=2}
\bb E \big[V(\frak B^{(p)}_{\hat{\bf{g}}})\big] \sim 
\begin{cases}
p^{\frac32}, & \text{ if } g = \frac{h}{2},\\
\frac{p^{\frac32}}{\ln p}, & \text{ if } g < \frac{h}{2}. 
\end{cases}
\end{equation}
\noindent In addition, the {mean} volume itself satisfies
\begin{equation}\label{eq: mean volume 2}
\overline{V}(p) \sim \Lambda \begin{cases}
  p^2, & \text{if } g=\frac{h}{2}, \\
 \frac{p^2}{\ln(p)}, & \text{if }g< \frac{h}{2}.
\end{cases}
\end{equation}

Despite the information on the mean volume \eqref{eq: mean volume} provided by Budd, no scaling limit in the spirit of \eqref{eq: scaling volume boltzmann} is known for the volume of loop--$O(n)$ quadrangulations. The main result of the present paper addresses this question by providing an explicit scaling limit.

\bigskip
\noindent \textbf{The Chen--Curien--Maillard multiplicative cascades.} In \cite{chen2020perimeter}, Chen, Curien and Maillard give an explicit and very convincing conjecture on the scaling limit of the volume of loop--$O(n)$ quadrangulations when $n\in(0, 2)$. They introduce a discrete cascade $(\chi^{(p)}(u), u\in \cal U)$ {indexed by the Ulam tree $\cal U$}, which informally records the half-perimeters of the $O(n)$ loops at each \emph{generation}, starting from the outermost loops (generation $1$) and exploring each loop in an inductive way. They prove that as the perimeter of the quadrangulation goes to infinity, the discrete cascade scales to a continuous \textbf{multiplicative cascade} $(Z_{\alpha}(u),u\in\cal U)$. This limiting branching process is related to the jumps of a spectrally positive $\alpha$--stable process, with $\alpha \in \big(1, \frac32\big)\cup \big(\frac32, 2\big)$ given by \eqref{eq: relation alpha and n}. More details are provided in \cref{sec: CCM results}.

One key feature of their analysis is the (additive) Malthusian martingale $(W_{\ell}, \ell\ge 0)$ of the multiplicative cascade, which sums over loops at generation $\ell\ge 0$:
\begin{equation} \label{eq: intro def martingale W_ell}
W_{\ell} := \sum_{|u|=\ell} (Z_{\alpha}(u))^{\theta_{\alpha}}, \quad \ell\ge 0,
\end{equation}
with $\theta_\alpha$ as in \eqref{eq: def theta_alpha}. We summarise part of their results in the following statement. Set
\begin{equation} \label{eq: intro psi}
\psi_{\alpha,\theta}(q)
:=
\frac{1}{\Gamma(\alpha-1/2)} \int_0^{\infty} \mathrm{e}^{-q^{2/\theta}y-1/y} y^{-(\alpha+1/2)} \mathrm{d}y.
\end{equation}

\begin{thm} \label{thm: CCM convergence martingales}
\emph{(\cite[Theorem 9]{chen2020perimeter}.)}

\noindent The martingale $(W_{\ell})_{\ell\ge 0}$ of \eqref{eq: intro def martingale W_ell} converges in $L^1$ as $\ell\to\infty$ towards a positive limit $W_{\infty}$. Moreover, the law of $W_{\infty}$ is determined by its Laplace transform as follows:
\begin{itemize}
\item in the dilute case ($\alpha>3/2$),
\begin{equation} \label{eq: dilute W_infty}
\bb E[\mathrm{e}^{-q W_{\infty}} ] =
\psi_{\alpha,\theta_{\alpha}} ((\alpha-3/2)q),
\end{equation}
\item in the dense case ($\alpha<3/2$),
\begin{equation} \label{eq: dense W_infty}
\bb E[\mathrm{e}^{-q W_{\infty}} ] =
\psi_{\alpha,\theta_{\alpha}} \bigg(\frac{\Gamma(\alpha+1/2)}{\Gamma(3/2-\alpha)} q\bigg).
\end{equation}
\end{itemize}
\end{thm}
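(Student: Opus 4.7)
The plan is to apply the now-classical theory of Mandelbrot multiplicative cascades and branching random walks (Kahane--Peyrière, Biggins, Lyons, Liu, Kyprianou) to the cascade $(Z_\alpha(u))_{u\in \mathcal U}$. Recall that, conditionally on a parent half-perimeter $Z_\alpha(u)$, the children half-perimeters $(Z_\alpha(uv),\,|v|=1)$ are (up to an overall scaling by $Z_\alpha(u)$) the jumps of a spectrally positive $\alpha$--stable process killed upon hitting $0$. The first task is therefore to verify the Malthusian identity
\[
\mathbb E\Big[\sum_{|u|=1} Z_\alpha(u)^{\theta_\alpha}\Big] \;=\; 1,
\]
which selects the exponent $\theta_\alpha=\min(2,2\alpha-1)$. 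This reduces to a moment computation against the explicit Lévy measure, and it is precisely here that the dilute and dense phases bifurcate, according to whether the exponent sits at the boundary of the integrability range ($\theta=2$ for $\alpha>3/2$) or at an interior solution ($\theta=2\alpha-1$).

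Once $(W_\ell)$ is identified as a non-negative martingale, almost sure convergence to some $W_\infty\ge 0$ is automatic. To upgrade to $L^1$ convergence (hence $\mathbb E[W_\infty]=1$ and $W_\infty>0$ almost surely), I would invoke Biggins' theorem, whose two ingredients are: (i) $\theta_\alpha$ lies in the interior of the admissibility domain of the Laplace transform $\Phi(\theta):=\mathbb E\bigl[\sum_{|u|=1}Z_\alpha(u)^\theta\bigr]$, with $\Phi'(\theta_\alpha)$ of the correct sign; and (ii) the moment condition $\mathbb E[W_1\log^+ W_1]<\infty$. Both reduce to direct estimates on the stable Lévy measure; (i) is the delicate point in the dilute case, where $\theta_\alpha=2$ sits at the upper edge of integrability of jumps but the derivative is still strictly positive thanks to the truncation coming from the total mass of the first generation. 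A probabilistically equivalent route is Lyons' spinal change-of-measure, ruling out degeneracy of $W_\infty$ by a recurrence/transience dichotomy of the spine walk.

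The identification of the Laplace transform of $W_\infty$, which is the main obstacle, proceeds via the smoothing fixed-point equation
\[
W_\infty \;\stackrel{(\mathrm d)}{=}\; \sum_{|u|=1} Z_\alpha(u)^{\theta_\alpha}\, W_\infty^{(u)},
\]
with $(W_\infty^{(u)})$ i.i.d.\ copies of $W_\infty$, independent of the first-generation weights. Conditioning on the first generation and using the Poisson point-process description of the stable jumps, the Laplace transform $\phi(q):=\mathbb E[\mathrm e^{-qW_\infty}]$ solves a functional equation of the form
\[
-\log \phi(q) \;=\; \int_0^{\infty}\bigl(1-\phi(q z^{\theta_\alpha})\bigr)\,\nu_\alpha(\mathrm dz),
\]
with $\nu_\alpha$ the explicit $\alpha$--stable Lévy measure. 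Motivated by the scaling $\phi(q)=\phi(\lambda q)$ enjoyed by any candidate of the form $\psi_{\alpha,\theta_\alpha}(c q)$, and by the fact that $\psi_{\alpha,\theta_\alpha}$ itself is, up to a substitution $s=1/y$, a Bessel-type integral representing the Laplace transform of an explicit stable-related density, I would plug the ansatz $\phi(q)=\psi_{\alpha,\theta_\alpha}(cq)$ into the equation and verify it by a Mellin-transform computation; the reflection formula for $\Gamma$ governs the dense case, and the constant $c$ is pinned down by the normalisation $\mathbb E[W_\infty]=1$, which yields $c=\alpha-3/2$ in the dilute phase and $c=\Gamma(\alpha+1/2)/\Gamma(3/2-\alpha)$ in the dense phase. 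Uniqueness of the solution among Laplace transforms of mean-$1$ variables is then Liu's theorem on smoothing transformations, so the substantive difficulty really lies in exhibiting the right ansatz, which is the heart of the computation.
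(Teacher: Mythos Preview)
The paper does not prove this statement at all: Theorem~\ref{thm: CCM convergence martingales} is quoted verbatim from Chen, Curien and Maillard \cite[Theorem~9]{chen2020perimeter} and used as a black box (only the $L^1$ convergence $W_\ell\to W_\infty$ is actually invoked, in \eqref{eq: convergence CCM additive martingale}). So there is no ``paper's own proof'' to compare against; your write-up is a sketch of how the cited result might be established.

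That said, your sketch contains a genuine gap. The offspring law $\nu_\alpha$ in \eqref{eq: def nu_alpha} is \emph{not} a Poisson point process: it is the law of the ranked jumps of a stable process up to the random hitting time $\tau$, further tilted by $1/\tau$. Hence the smoothing equation you write,
\[
-\log \phi(q) \;=\; \int_0^{\infty}\bigl(1-\phi(q z^{\theta_\alpha})\bigr)\,\nu_\alpha(\mathrm dz),
\]
which would be correct for a Poissonian offspring with intensity measure $\nu_\alpha$, is not the right equation here; the correct fixed-point identity is $\phi(q)=\mathbb E\bigl[\prod_{|u|=1}\phi\bigl(q\,Z_\alpha(u)^{\theta_\alpha}\bigr)\bigr]$ with the dependent family $(Z_\alpha(u))_{|u|=1}$ drawn from $\nu_\alpha$, and the product does not factor through a L\'evy--Khintchine exponent. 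The actual computation in \cite{chen2020perimeter} exploits this specific stable structure (and, in the dilute case, the link with the inverse-Gamma law) rather than a generic Poissonian smoothing argument. Your remarks on $L^1$ convergence via Biggins/Lyons are broadly in the right spirit, but note from \cref{prop: CCM expression of biggins} that $\phi_\alpha$ is only finite on $(\alpha,\alpha+1)$ and $\theta_\alpha$ is the \emph{smaller} root of $\phi_\alpha=1$; the relevant derivative and $x\log x$ checks must be done against this explicit expression, not against raw stable-jump integrability.
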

In particular, in the dilute case, $W_\infty$ follows the inverse-Gamma distribution with parameters $(\alpha-1/2, \alpha-3/2)$. In the same paper, the authors further make the conjecture that $W_\infty$ describes the scaling limit of the volume $V(\frak q)$ appropriately normalised by \eqref{eq: mean volume}. The arguments supporting this conjecture are the following. Consider the conditional expectation $V_{\ell}$ of $V(\frak q)$ given the information \emph{outside} all the loops at generation $\ell$. It is plain that $V_{\ell}$ is a uniformly integrable martingale that converges to $V(\frak q)$ as $\ell \to \infty$. On the other hand, one should expect $V_\ell$ to be \emph{close} to $\Lambda p^{\theta_\alpha} W_{\ell}$ as the half-perimeter $p$ of $\frak q$ goes to infinity. Indeed, one can see that the portion outside loops at generation $\ell$ is negligible.\footnote{{Heuristically, this is due to the fact that, in expectation (recall \eqref{eq: mean volume boltzmann} and \eqref{eq: mean volume}), the volume of the gasket is of order $p^{\alpha}$, while that of $\mathfrak{q}$ is of order $p^{\theta_{\alpha}}$, with $\theta_{\alpha}>\alpha$. A rigorous statement in this direction will be obtained in \cref{thm: gasket estimate}.}} 
Hence using the gasket decomposition, and Budd's asymptotics \eqref{eq: mean volume} on the mean volume, $V_\ell$ should be close to
\[
\widetilde{V}_\ell = \Lambda \sum_{|u|=\ell} (\chi^{(p)}(u))^{\theta_{\alpha}},
\]
which is the discrete analogue of $W_\ell$. The scaling of $(\chi^{(p)}(u), u\in \cal U)$ towards $(Z_{\alpha}(u),u\in\cal U)$ then provides the heuristics. In a nutshell (taking $\ell$ large), we end up with the conjecture that $V(\frak q)$ should be close to $\Lambda p^{\theta_\alpha} W_{\infty}$ as $p\to\infty$. These arguments can be turned into a proof provided some uniform integrability is known on the volumes. We emphasise that we do not know any direct way to establish the desired uniform integrability, except as a consequence of the results of the present paper. 

In the case $n=2$, however, $W_{\infty}=0$  a.s. and the volumes are not uniformly integrable. We will see that, in spite of the mean asymptotics \eqref{eq: mean volume}, the correct renormalization is actually $\frac{p^2}{\ln(p)}$ when $g = \frac{h}{2}$ and $\frac{p^2}{(\ln(p))^2}$ when $g < \frac{h}{2}$ (note that $\theta_{\alpha} = 2$ {when $n=2$}), and $W_\ell$ should be replaced with the so-called derivative martingale 

\begin{equation} \label{eq: intro def martingale D_ell}
D_{\ell} := - 2\sum_{|u|=\ell} (Z_{\alpha}(u))^{2}\ln (Z_{\alpha}(u)), \quad \ell\ge 0.
\end{equation}

\noindent This martingale is well-known in the context of branching Brownian motion \cite{lalleysellke, neveuderivative, kyprianouKPP} and branching random walks \cite{Liuderivative, Kyprianouderivative, biggins2004measure}.  
Although it is a signed martingale, its almost sure limit $D_\infty$ exists and is positive. Moreover $1/D_\infty$ is exponentially distributed with parameter $1$, see \cref{p:derivative martingale}. 

{
We provide more context on branching random walks, the \emph{boundary} case and the role of the derivative martingale in \cref{sec:bdry case derivative}.
}

\bigskip
\noindent \textbf{CLE on LQG.} A tantalising conjecture from physics, dating back to Nienhuis (see the survey \cite{kager2004guide}), is that after appropriate embedding, the aforementioned loop--$O(n)$ quadrangulations (with a boundary) are described in the scaling limit by a suitable $\gamma$--quantum disc, on top of which is drawn an independent $\text{CLE}_\kappa$, where the parameters are given in terms of \eqref{eq: relation alpha and n} by
\begin{equation} \label{eq: gamma and kappa}
\gamma = \sqrt{\min(\kappa, 16/\kappa)},
\quad \text{and} \quad
\frac{4}{\kappa} =\alpha -\frac12.
\end{equation}
Similar conjectures hold for many other important models of planar maps, such as the Fortuin--Kasteleyn model, and remain today the main challenge in random conformal geometry.

When $n\in (0,2)$, our main result on the volume of loop--$O(n)$ quadrangulations relates to this conjecture in that, as proved by Ang and Gwynne \cite{ang2021liouville}, in the dilute case, the limiting random variable $W_\infty$ describes the law of the \textbf{area} of a \emph{unit-boundary} $\gamma$--quantum disc (with $\gamma$ as in \eqref{eq: gamma and kappa}). As a consequence, our result can be rephrased as a scaling limit result for the volume of loop--$O(n)$ quadrangulations towards the area of its quantum analogue. As pointed out in \cite{chen2020perimeter}, the multiplicative cascade $Z_\alpha$ can also be constructed directly in the continuum by recording the perimeters of the nested loops in the $\text{CLE}_{\kappa}$ drawn on top of the unit-boundary quantum disc. This construction is essentially a consequence of \cite{miller2017cle}.

When $n=2$, the model is expected to converge to $\cle_4$ on critical Liouville quantum gravity ($\gamma=2, \kappa=4$). In this case it was first conjectured in \cite{aidekon2022growth} that the volume of loop--$O(2)$ quadrangulations scales to the duration of a Brownian half-plane excursion from $(0,0)$ to $(1,0)$, which is inverse-exponentially distributed. This conjecture is bolstered by the convergence of the derivative martingale related to the growth-fragmentation embedded in the half-plane excursion (see \cite[Section 5]{aidekon2022growth}), and the connection with the \emph{intrinsic areas} defined in \cite{bertoin2018martingales}. On the other hand, the critical mating of trees established by Aru, Holden, Powell and Sun \cite{aru2023brownian} shows that the duration of the Brownian half-plane excursion describes the law of the area of a critical quantum disc. Therefore, the conjecture of \cite{aidekon2022growth} can be translated into a convergence statement for the volume of loop--$O(2)$ quadrangulations towards their quantum area analogue. Our paper solves this conjecture and provides the explicit scaling in $\ln(p)/\overline{V}(p)$, revealing a logarithmic correction to the mean behaviour.

%---------------------------------------------------------------------------------------------------------------%
%		              MAIN RESULT
%---------------------------------------------------------------------------------------------------------------%
\subsection{Main result and outline}
\label{sec: main result + outline}

\noindent \textbf{Statement of the main result.} The main result of this paper is an explicit scaling limit for the volume of (non-generic critical) loop--$O(n)$ quadrangulations, proving a conjecture of Chen, Curien and Maillard \cite{chen2020perimeter} when $n\in(0, 2)$ and giving the analogous result in the boundary case $n=2$. 
The scaling limit is described in terms of the limit $W_\infty$ of the Malthusian martingale of the multiplicative cascades (see \cref{thm: CCM convergence martingales}) when $n\in(0, 2)$ and in terms of the limit $D_{\infty}$ of the derivative martingale when $n = 2$. The law of $W_\infty$, \textit{resp.}\ $D_\infty$, is explicitly given by \eqref{eq: dilute W_infty}--\eqref{eq: dense W_infty},  \textit{resp.}\ \cref{p:derivative martingale}. We denote by $V = V(\frak q)$ the volume (\textit{i.e.} the number of vertices) of the quadrangulation $\frak q$. Recall from \cref{eq: mean volume} and \cref{eq: mean volume 2} the notation and asymptotics of the expected volume $\overline{V}(p)$.

\begin{Thmx}\label{thm: main}
The following convergence in distribution holds for the volume of rigid loop--$O(n)$ quadrangulations: as $p\to\infty$, when $n \in(0, 2)$, 
\[
\frac{1}{\overline{V}(p)} V\overset{(\mathrm{d})}{\longrightarrow} W_{\infty},
\]
when $n = 2$, 
\[
\frac{\ln p}{\overline{V}(p)} V\overset{(\mathrm{d})}{\longrightarrow} D_{\infty}.
\]
\end{Thmx}

The proof uses a two-step procedure. First, we fix some large threshold  $M>0$ and show that for $M$ large enough, one can neglect 
{as $p\to\infty$ the contribution of vertices that are not included in any loop of perimeter smaller than $2M$.}
By the gasket decomposition, the remaining portion of the map then consists in many independent loop-decorated maps with perimeter less than $2M$. In the second step, we establish a classification {in \cref{def: bad vertices}} of this remaining portion of the map into good or bad regions. 
We prove that one can rule out the contribution of bad regions to the volume, and estimate the size of the good region, where the volume is square integrable. Our estimates mainly rely on the analysis on a Markov chain $\tt S$ {introduced in \cref{sec: markov chain}}, which records the half-perimeters of the nested loops around a typical vertex of the map.

{The classification alluded to above is quite involved, but at a very high level, it can be understood as follows. We will say that a loop (or the region inside it) is good if:
\begin{itemize}
    \item[(a)] The sequence of perimeters of the nested loops around it (in $(\mathfrak{q},\boldsymbol{\ell})$) is tamed;
    \item[(b)] These nested loops do not carry too many loops, whose perimeters conspire to inflate their volume.
\end{itemize}
We will express this in terms of the Chen--Curien--Maillard cascade $(\chi^{(p)}(u), u\in \cal U)$. It will result in a set of constraints on branches of the tree $\cal U$ up to good labels $v$, that roughly ensure, respectively, that:
\begin{itemize}
    \item[(a)] The values of $\chi^{(p)}$ along these branches stay below a barrier;
    \item[(b)] At each generation along these branches, the offspring particles do not carry ``unusually large'' values of $\chi^{(p)}$.\footnote{{To be more precise, we stress that what will matter is not the value of the (half-)perimeters $\chi^{(p)}$ \textit{per se}, but rather the ``expected volume'' they should carry. It would be more accurate to say that this expected volume should be small compared to that of the parent loop.}}
\end{itemize}
The classification induced by this set of constraints will be estimated using branching arguments.}
\added{
{In that respect}, the structure of the proof bears close connections to that of martingale convergence for branching random walks. The key feature in our approach is the \emph{branching Markov chain} $(\chi^{(p)}(u), u\in\cal U)$ recording the half-perimeters of the loops. In order to prove the convergence of the renormalised volume, we leverage information on the \emph{spine} $\tt S$ to shed light on the behaviour of typical loops in the branching Markov chain. In particular, we establish moment estimates on $\tt S$, and control the offspring of the loops. This idea is similar to the setting of branching random walks (see for instance \cite{lyons1997simple,biggins2004measure,shi2016branching}).
}

\bigskip
\noindent \textbf{Organisation of the paper.} The paper is organised as follows. We start \cref{sec: prelim} with some preliminaries, recalling the setup of Chen, Curien and Maillard \cite{chen2020perimeter} and explaining the tree encoding of Boltzmann planar maps \textit{via} the Bouttier--Di Francesco--Guitter and Janson--Stefánsson transformations. Then we introduce the perimeter cascade of Chen, Curien and Maillard and its scaling limit.

In \cref{sec: discrete Biggins transform}, we discuss the discrete Biggins transform, for which we provide uniform tail estimates. This will enable us to control the offspring of typical loops in the nested cascade, {so as to estimate the constraint (b) outlined above}. We will also prove its convergence to the Biggins transform of the continuous cascade in an $L^p$ sense. 

\cref{sec: markov chain estimates} is the main technical part of the paper. We introduce the Markov chain $\tt S$, making explicit the spine structure of the discrete multiplicative cascade. 
{As already mentioned, this Markov chain describes the sequence of half-perimeters of nested loops around a \emph{typical} vertex of the quadrangulation. This will allow us to translate a few estimates from the whole perimeter cascade $(\chi^{(p)}(u),u\in\cal U)$ to the single trajectory $\tt S$ (such a procedure is referred to as \emph{many-to-one} in the branching literature). The gasket threshold $M$ of the discussion following \cref{thm: main} and the constraint (a) that is sketched above suggest that we will need to understand the behaviour of the perimeter cascade confined within two barriers. This will thus translate into estimating}
the hitting times and Green function of $\tt S$, in both cases $n\in (0,2)$ and $n=2$. 
{In the latter case, we use a new coupling argument between the discrete and continuous cascade which is of independent interest. We will see that, because the cascade is critical in this case, logarithmic corrections emerge as a cost for introducing the upper barrier.}

In \cref{sec: classification}, we present our {rigorous} classification of the map into \emph{good} or \emph{bad} regions. 
On the one hand, we shall see that the estimates in \cref{sec: markov chain estimates} are tailored to rule out the contribution of the bad regions to the volume. On the other hand, we prove that the remaining good volume is square integrable and establish a second moment estimate for this good region. 

We conclude the proof of \cref{thm: main} in \cref{sec: proof main result} by tuning all the parameters of our estimates. This allows to neglect the bad regions, and to approximate the rescaled volume by the additive martingale or the derivative martingale of the \emph{continuous} cascade.
{
We stress that \cref{sec: proof main result} also presents three important \textbf{flowcharts} (Figures~\ref{fig: proof diagram 1}, \ref{fig: proof diagram 2} and \ref{fig: proof diagram 3}), which should help the reader understand how the estimates of Sections~\ref{sec: markov chain estimates} and \ref{sec: classification} come together.
}

Finally, the Appendix is devoted to the rigorous definition and basic properties of the rigid loop--$O(2)$ model on quadrangulations.
We first present in \cref{s:partition function O(2)} standard general results in the $O(n)$ case for $n\in (0,2)$. We then use these results in \cref{s:construction O(2)} to construct the rigid loop--$O(2)$ model and derive some of its basic properties. In \cref{s:tail distribution S}, we prove a key estimate for the Markov chain $\tt S$ in the case $n = 2$.

%---------------------------------------------------------------------------------------------------------------%
%
%					  PRELIMINARIES
%
%---------------------------------------------------------------------------------------------------------------%
\section{Preliminaries}
\label{sec: prelim}

%---------------------------------------------------------------------------------------------------------------%
%		              THE GASKET DECOMPOSITION
%---------------------------------------------------------------------------------------------------------------%
\subsection{The gasket decomposition}
\label{sec: gasket decomposition}
One fruitful approach towards understanding the geometry of $O(n)$--decorated planar maps relies on the \textbf{gasket decomposition} that was formalised by Borot, Bouttier and Guitter in \cite{borot2011recursive} (see also \cite[Section 8]{legall2011scaling}). This decomposition reveals a spatial Markov property that has been used to provide a lot of geometric information on the planar maps coupled to $O(n)$ models as well as their gaskets, see \cite{budd2017geometry,budd2018infinite,bertoin2018martingales,budd2018peeling,chen2020perimeter,albenque2022geometric} to name but a few. We follow the exposition given in \cite{chen2020perimeter}, which extends to the case when $n=2$.

Let $(\frak q, \bf \ell)$ be a fixed loop-decorated quadrangulation, and let $l\ge 1$ be the number of \emph{outermost} loops in $\bf \ell$, that is, loops that can be reached from the boundary of $\frak q$ without crossing another loop. Erasing these loops together with the edges they cross, one ends up disconnecting the map into $l+1$ connected components (see \cref{fig: gasket decomposition}):
\begin{itemize}
\item The connected component containing the external face is called the \textbf{gasket}. The gasket is a (rooted bipartite) planar map whose internal faces are either quadrangles from the original map $\frak q$, or the $l$ \textbf{holes} left by the removal of outermost loops.  The perimeter of the gasket is the same as that of $\frak q$. Moreover, one may label the holes $\frak h_1,\ldots, \frak h_l$ of $\frak g$ in some deterministic way that we fix from now on.
\item The other $l$ components consist of the loop-decorated quadrangulations inside the holes (rooted in some deterministic fashion), with perimeters given by the degrees of the holes.
\end{itemize}

In other words, the loop-decorated quadrangulation $(\frak q, \bf\ell)$ decomposes in the following way: given its gasket $\frak g$, one may recover $(\frak q, \bf\ell)$ by gluing into each face of $\frak g$ of degree $2k$, $k\ge 1$, a loop-decorated quadrangulation with perimeter $2k$, surrounded by a necklace of $2k$ quadrangles crossed by a loop -- with the caveat that for $k=2$ one might as well glue a plain quadrangle (with no loop).

By definition of the model (see \eqref{eq: partition function O(n)}), it is easily seen that the gasket $\frak g$ of a loop--$O(n)$ decorated quadrangulation $(\frak q, \bf \ell)$ with $(n,g,h)$--weights is Boltzmann distributed \cite{borot2011recursive,legall2011scaling}. More precisely, under \added{$\bb P^{(p)} = \bb P^{(p)}_{(n;g,h)}$}, the probability for the gasket to be a given map $\frak m$ with perimeter $2p$ is proportional to the weight
\begin{equation}\label{eq: Boltzmann weights}
w_{\bf{\hat{g}}} :=
\prod_{\frak f \in \textsf{Faces}(\frak m) \setminus \frak f_r} \hat{g}_{\mathsf{deg}(\frak f)/2},
\end{equation}
where the weight sequence $\bf{\hat{g}} = (\hat{g}_k,k\ge 1)$ is given in terms of the $O(n)$ partition function \eqref{eq: partition function O(n)} as
\begin{equation} \label{eq: fixed point equation}
\forall k\ge 1, \quad  \hat{g}_k := g\delta_{k,2} + n h^{2k} F_k(n;g,h).
\end{equation}
Finally, it is known that the weights in \eqref{eq: Boltzmann weights} can be normalised to form a probability measure, by equivalence of admissibility of sequences related through \eqref{eq: fixed point equation} (see \cite{budd2018peeling}). We comment on that point when $n=2$ in \cref{s:partition function O(2)}.

\begin{figure}[h]
\begin{center}
\includegraphics[scale=0.8]{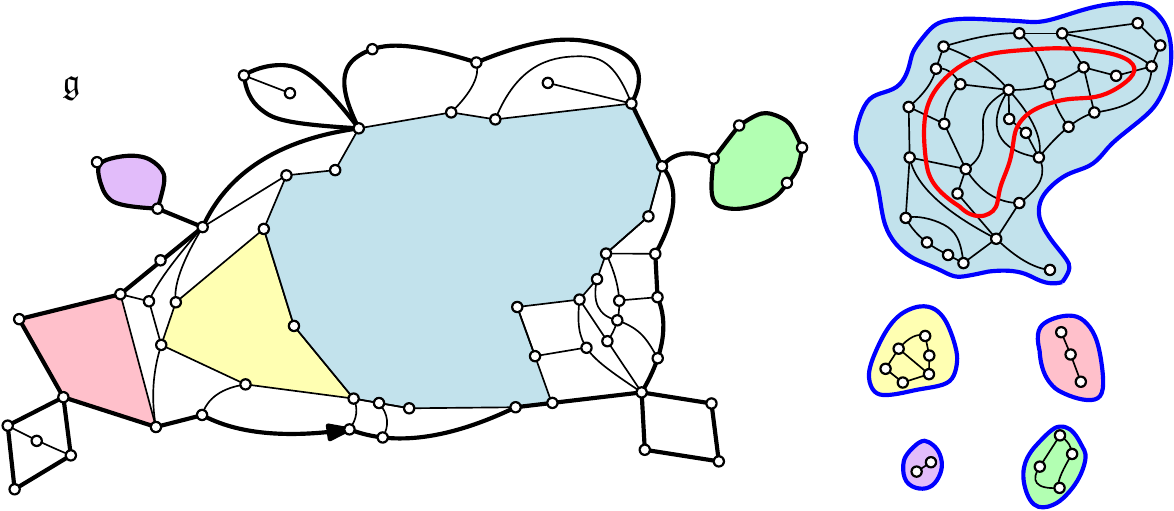}
\end{center}
\caption{The gasket decomposition of the loop-decorated quadrangulation $(\frak q,\bf\ell)$ in \cref{fig: loop-decorated quadrangulation}. On the left is the gasket $\frak g$, whereas on the right are the other connected components. All of these come with a deterministic choice of root edge, that we did not represent. \added{To be precise, we stress that the gluing operation also requires \emph{rooted} holes (this is discussed thoroughly in \cite[Section 2.2]{borot2011recursive}).}}
\label{fig: gasket decomposition}
\end{figure}

This property of the gasket, together with the following spatial Markov property (borrowed from \cite[Lemma 1]{budd2018peeling}), is the cornerstone of the gasket decomposition approach, relating properties of the loop-decorated maps to that of their gaskets. It is also the starting point of the multiplicative cascades of Chen, Curien and Maillard \cite{chen2020perimeter}.

\begin{prop} \label{prop: spatial markov gasket}
\emph{(Spatial Markov property of the gasket decompositon.)}

\noindent Under \added{$\bb P^{(p)}$}, let $(\frak q, \bf \ell)$ be a loop--$O(n)$ decorated quadrangulation with perimeter $2p$. Conditionally on the gasket $\frak g$ and its holes $\frak h_1,\ldots,\frak h_l$ of degrees $2h_1,\ldots,2h_l$, the loop-decorated maps $(\frak q_1, \bf \ell_1),\ldots, (\frak q_l, \bf \ell_l)$ filling in the holes $\frak h_1,\ldots, \frak h_l$ are independent with respective laws given by \added{$\bb P^{(h_1)}, \ldots, \bb P^{(h_l)}$}.
\end{prop}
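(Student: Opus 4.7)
The plan is to extract the conditional distribution directly from the multiplicative structure of the $O(n)$ weights in \eqref{eq: weights O(n)}. The proof is essentially a weight factorisation combined with a bijective re-assembly statement, with no deep probabilistic input.

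First, I would fix a gasket $\frak g$ with labelled holes $\frak h_1,\ldots,\frak h_l$ of degrees $2h_1,\ldots,2h_l$, and for each $i$ a loop-decorated quadrangulation $(\frak q_i,\bf\ell_i)\in\cal O_{h_i}$ with the prescribed deterministic root. Re-gluing these pieces into $\frak g$ --- together with the necklace of $2h_i$ crossed quadrangles surrounding each hole and the corresponding outermost loop --- reconstructs a loop-decorated quadrangulation $(\frak q,\bf\ell)\in\cal O_p$. The key observation is the additive decomposition
\begin{align*}
|\frak q| &= E(\frak g) + {\textstyle\sum_{i=1}^l 2h_i} + {\textstyle\sum_{i=1}^l |\frak q_i|},\\
|\bf\ell| &= {\textstyle\sum_{i=1}^l 2h_i} + {\textstyle\sum_{i=1}^l |\bf\ell_i|},\\
\#\bf\ell &= l + {\textstyle\sum_{i=1}^l \#\bf\ell_i},
\end{align*}
where $E(\frak g)$ denotes the number of empty (\emph{i.e.}~non-hole) quadrangles of $\frak g$. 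Plugging these identities into \eqref{eq: weights O(n)} yields the multiplicative factorisation
\[
\mathsf{w}_{(n;g,h)}(\frak q,\bf\ell) \;=\; g^{E(\frak g)} \cdot \bigg(\prod_{i=1}^l n\,h^{2h_i}\bigg) \cdot \prod_{i=1}^l \mathsf{w}_{(n;g,h)}(\frak q_i,\bf\ell_i).
\]

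Second, I would sum this identity over $(\frak q_i,\bf\ell_i)\in\cal O_{h_i}$ with $\frak g$ held fixed; by \eqref{eq: partition function O(n)}, this produces the marginal weight of the gasket, namely $g^{E(\frak g)} \prod_i n h^{2h_i} F_{h_i}(n;g,h)$, which incidentally reproduces the Boltzmann weights $\hat g_k$ in \eqref{eq: fixed point equation}. Dividing the factorisation by this marginal then gives
\[
\bb P^{(p)}\big((\frak q_i,\bf\ell_i)_{i=1}^l \in \cdot \,\big|\, \frak g, \frak h_1,\ldots,\frak h_l\big) \;=\; \prod_{i=1}^l \frac{\mathsf{w}_{(n;g,h)}(\frak q_i,\bf\ell_i)}{F_{h_i}(n;g,h)},
\]
which is exactly the product measure $\bigotimes_{i=1}^l \bb P^{(h_i)}$, delivering both the independence and the announced marginal laws.

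The only delicate point is the bijectivity of the gasket decomposition itself: one must verify that every $(\frak q,\bf\ell)\in\cal O_p$ whose gasket (with its distinguished holes) equals $\frak g$ corresponds to a unique tuple of rooted sub-maps $(\frak q_i,\bf\ell_i)$, so that the sum above really is over $\prod_{i=1}^l \cal O_{h_i}$. This is essentially the content of \cite[Section 2.2]{borot2011recursive}, and amounts to fixing a canonical rooting on each sub-map from the ambient combinatorial data of $\frak g$. Apart from this combinatorial bookkeeping, no substantial obstacle is expected.
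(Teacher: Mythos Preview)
Your argument is correct and is precisely the standard weight-factorisation proof of the spatial Markov property. Note, however, that the paper does not give its own proof of this proposition: it is stated as ``borrowed from \cite[Lemma 1]{budd2018peeling}'' and used as a black box. Your proposal therefore supplies a proof where the paper simply cites one, and the approach you take (multiplicative decomposition of $\mathsf{w}_{(n;g,h)}$ along the gasket, summation over fillings, then division by the marginal) is exactly the argument underlying the cited references \cite{borot2011recursive,budd2018peeling}.
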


\begin{rem} We shall often need a \emph{stopping line}\footnote{{Stopping lines are special random collections of particles (in this case, loops) that play the role of stopping times for branching processes. An example of stopping line that is relevant to the present paper is the collection of loops whose perimeters have dropped below $2M$ for the first time (\textit{i.e.}\ loops with perimeter smaller than $2M$ which are not strictly contained in a loop with perimeter smaller than $2M$). We refer to \cite{jagers1989general,chauvin1991prod} for general background on the notion.}} 
version of the above spatial Markov property.
We feel free to simply mention it as we go without providing the details.
\end{rem}

%---------------------------------------------------------------------------------------------------------------%
%		             		 BDG BIJECTION
%---------------------------------------------------------------------------------------------------------------%
\subsection{The Bouttier--Di Francesco--Guitter bijection and the Janson--Stefánsson trick}
\label{sec: BDG+JS}

\noindent \textbf{The Bouttier--Di Francesco--Guitter (BDG) bijection.} The BDG bijection \cite{bouttier2004planar} gives a way to encode (pointed) bipartite planar maps \emph{via} trees that is particularly suited to Boltzmann planar maps \cite{marckert2005invariance,bettinelli2017compact}. It will be convenient to work with the slight modification of \cite{chen2020perimeter}. Let $(\frak m,\rho)$ be a \textbf{pointed} bipartite planar map of perimeter $2p$, that is a planar map together with a distinguished vertex $\rho$. The following algorithm uses a four-step procedure to build a forest out of $(\frak m,\rho)$ -- see \cref{fig: BDG}:

\bigskip
\begin{minipage}{0.91\textwidth}
\noindent \textsf{Step $1$.} \emph{Draw a dual vertex inside each face of $\frak m$, including the external face. Dual vertices are coloured black, while primal vertices are white. Label the white vertices with their distances with respect to $\rho$. As $\frak m$ is bipartite, the labels of any two adjacent vertices differ by exactly $1$.}

\smallskip
\noindent \textsf{Step $2$.} \emph{Connect a white vertex to a black vertex (corresponding to some face) if the next clockwise white vertex around that face has a smaller label.}

\smallskip
\noindent \textsf{Step $3$.} \emph{Remove the edges of $\frak m$ and the marked vertex $\rho$. This produces a tree \cite{bouttier2004planar}.}

\smallskip
\noindent \textsf{Step $4$.} \emph{Remove the external black vertex $v_{\textnormal{ext}}$ and its neighbouring edges. This results in $p$ trees, rooted at the neighbours of $v_{\textnormal{ext}}$. We choose one of them, $\frak t_1$, to be the first one uniformly at random, and shift all the labels in the trees so that the root of $\frak t_1$ has label $0$. The output is actually a forest of \emph{mobiles}.}
\end{minipage}

\bigskip
\noindent We refer to \cite{bouttier2004planar,bettinelli2017compact} for more details on mobiles and for a description of the reverse construction. From now on we forget the labels in the trees.

\begin{figure}[h]
\bigskip
\begin{center}
\includegraphics[scale=0.78]{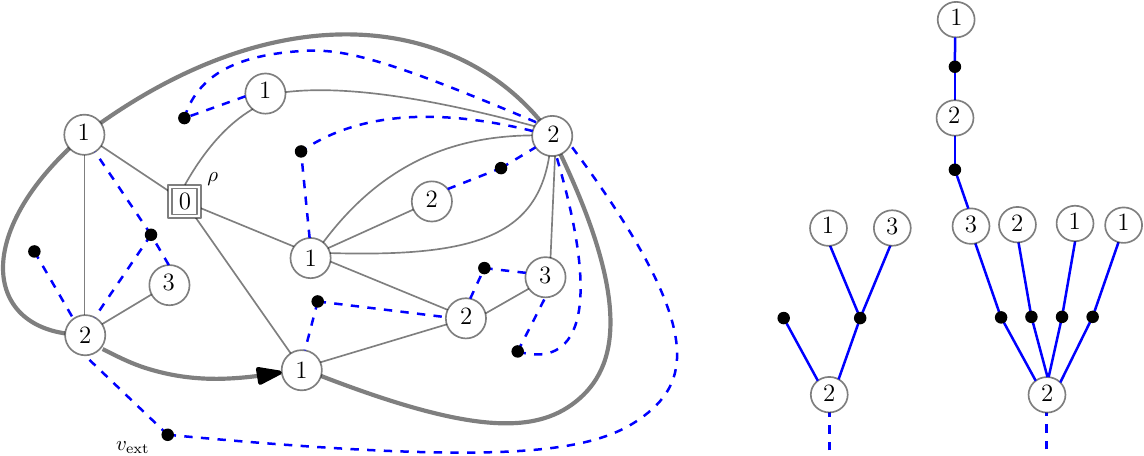}
\end{center}
\caption{The Bouttier--Di Francesco--Guitter bijection. Left: the planar map $\frak m$ is represented in blue, with its marked (square) vertex $\rho$. We draw additional (dashed) edges between a vertex and a face according to Step 2. Right: The forest obtained by disconnecting the external vertex $v_{\textnormal{ext}}$ -- the number of trees corresponds to the half-perimeter of $\frak m$.}
\label{fig: BDG}
\end{figure}

The BDG bijection works particularly well with Boltzmann planar maps \cite{marckert2005invariance}. It is possible to define \emph{pointed} Boltzmann planar maps $\frak m_{\bullet}:=(\frak m,\rho)$ with perimeter $2p$, where $\frak m$ is Boltzmann distributed, and conditionally on $\frak m$, $\rho$ is a uniformly chosen vertex. In other words, the pointed measure is given by the same weights as in \eqref{eq: Boltzmann weights}, but different normalising constant taking care of the uniformly chosen vertex (one can prove that such a normalisation is possible, see \cite[Corollary 23]{curien2019peeling}). Let $\frak m_{\bullet}$ be a pointed Boltzmann planar map with perimeter $2p$, and $\mathscr F^{(p)}_{\text{BDG}}$ the forest of mobiles constructed by running the BDG algorithm. Marckert and Miermont \cite[Proposition 7]{marckert2005invariance} proved that $\mathscr F^{(p)}_{\text{BDG}}$ is a two--type Galton--Watson forest with reproduction law given explicitly in terms of $\bf{\hat{g}}$.

\medskip
\noindent \textbf{The Janson--Stefánsson (JS) trick.} Using the Janson--Stefánsson trick \cite{janson2015scaling}, one can transform the two-type Galton--Watson trees output of the BDG bijection into Galton--Watson trees (with no types). The transformation keeps the same vertices but changes the set of edges, with the interesting feature that white vertices are mapped to leaves, whereas black vertices are mapped to internal vertices. It goes recursively as follows (see \cref{fig: JS}):

\bigskip
\begin{minipage}{0.91\textwidth}
\emph{Suppose a plane bipartite tree $\frak t$ is given (the root $r$ is taken to be white). Consider the children of $r$, say $r_1,\ldots, r_J$ \added{(labelled from left to right)}. Set $r_1$ as the new root. Draw an edge between $r_j$ and $r_{j+1}$ $(1\le j\le J-1)$, and finally an edge between $r_J$ and $r$. Then carry on for the next generations: if $w\ne r$ is a white vertex, denote its offspring by $b_1,\ldots,b_K$ and its parent by $b_0$. Connect $b_0$ to $b_1$, then $b_k$ to $b_{k+1}$ $(1\le k\le K-1)$ and finally $b_K$ to $w$ (if $w$ has no children, we just connect $b_0$ to $w$).
}
\end{minipage}

\begin{figure}[h]
\bigskip
\begin{center}
\includegraphics[scale=0.85]{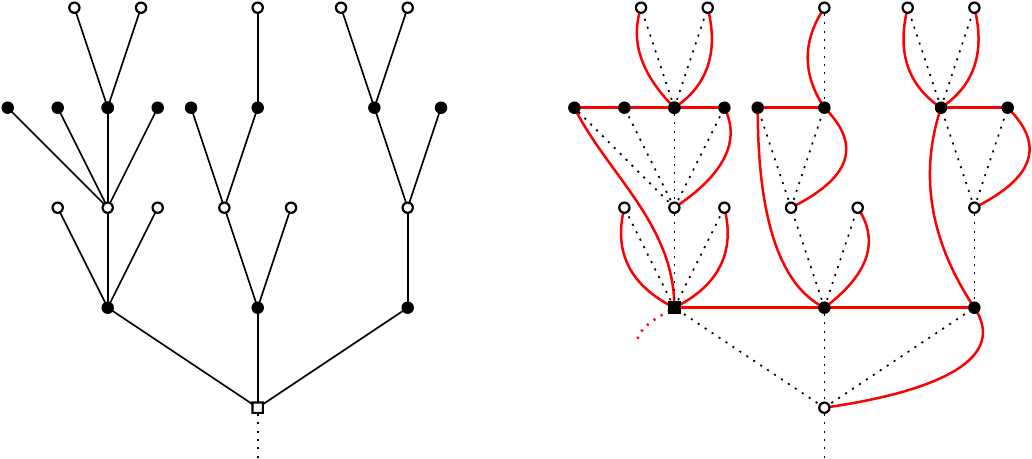}
\end{center}
\caption{The Janson--Stefánsson trick. On the left is the original tree $\frak t$, and on the right the new tree in red (with the old tree $\frak t$ in dotted line). The root vertex is represented by a square. Observe that the white vertices are the leaves of the new tree.}
\label{fig: JS}
\end{figure}

\bigskip
\noindent Then Janson and Stefánsson \cite{janson2015scaling} derived the law of the image $\mathscr F^{(p)}_{\textnormal{JS}}$ under this mapping of the above forest $\mathscr F^{(p)}_{\text{BDG}}$ (obtained after running the BDG algorithm on a pointed Boltzmann map). Explicitly, $\mathscr F^{(p)}_{\textnormal{JS}}$ is a forest of $p$ i.i.d. Galton-Watson trees with offspring distribution
\begin{equation}\label{def:muJS}
\mu_{\textnormal{JS}}(0) =Z_{\bf{\hat{g}}}^{-1}, \quad \text{and} \quad \mu_{\textnormal{JS}}(k) = \dbinom{2k-1}{k-1} \hat{g}_k Z_{\bf{\hat{g}}}^{k-1}, \quad k\ge 1,
\end{equation}
where $Z_{\bf{\hat{g}}}$ is the smallest positive root of the equation
\begin{equation}\label{eq:admissible}
1 + \sum_{k\ge 1} \dbinom{2k-1}{k-1} \hat{g}_k z^{k} = z.
\end{equation}

\noindent See also \cite{curien2015percolation,chen2020perimeter}. {For Boltzmann maps corresponding to the gaskets of loop--$O(n)$ quadrangulations in the non-generic critical regime,} %in the non-generic critical phase, 
it is known that $Z_{\bf{\hat{g}}} = \frac{1}{4h}$ (see \cite[Section 4.1]{borot2011recursive} and \cite[Section 2.3]{curien2019peeling}). In this context, the non-generic criticality condition can be recast in terms of $\mu_{\textnormal{JS}}$. 
More precisely, fix a set of parameters $(n; g, h)$ in the non-generic critical region. Then one of the following two cases holds:
\begin{enumerate}
    \myitem[(A)] \label{caseA}
    %\textit{Case A.} 
    {The set of parameters} $(n; g, h)$ satisfies either of
    $
    \begin{cases}
        \eqref{non-generic line} \text{ and } n\in (0,2);\\
        \eqref{n=2 line} \text{, } g = \frac{h}{2} \text{ and } n=2; 
    \end{cases}
    $
    \myitem[(B)] \label{caseB}
    %\textit{Case B. } 
    {The set of parameters}
    $(n;g,h)$ satisfies \eqref{n=2 line}, $g<\frac{h}{2}$ and $n=2$.
\end{enumerate}
Recall the relation \eqref{eq: relation alpha and n} between $\alpha$ and $n$ when $n\in(0, 2)$ and $\alpha = \frac32$ when $n=2$, and let $\bf{\hat{g}}$ and $\mu_{\textnormal{JS}}$ as in (\ref{eq: fixed point equation}) and (\ref{def:muJS}). Then the offspring distribution $\mu_{\textnormal{JS}}$ has mean one and satisfies
\begin{equation} \label{def: non generic critical mu_JS}
\mu_{\textnormal{JS}}(k) \sim C_{\textnormal{JS}} 
\begin{cases}
    k^{-\alpha-1}, \quad & \text{ in {Case}~\ref{caseA}, }\\
    k^{-5/2}\ln (k), \quad & \text{ in {Case}~\ref{caseB}, }
\end{cases} \quad \text{as } k\to \infty,
\end{equation}
for some constant $C_{\text{JS}}>0$.
{In particular, for Boltzmann maps corresponding to the gaskets of loop--$O(n)$ quadrangulations in the non-generic critical regime, the Galton--Watson trees appearing in $\mathscr F^{(p)}_{\textnormal{JS}}$ are critical. }

{
Since the estimates are different in {Case}~\ref{caseA} and {Case}~\ref{caseB}, it will often be important to split the proofs according to these two cases. We therefore emphasise that these cases are globally defined throughout the paper.
}

\medskip
\noindent \textbf{Random walk encoding.} Going one step further in the above chain of transformations, we may now encode the Galton--Watson trees {using Łukasiewicz paths \cite[Section 1.1]{le2005randomtrees}}. %a random walk in the usual way. 
Under $\bb P$, let $(X_i)_{i\ge 1}$ be an i.i.d. sequence of random variables in $\{-1,0,1,\ldots\}$, with common distribution
\begin{equation} \label{eq: RW Janson--Stefansson tree}
\bb P (X_1 = k-1) = \mu_{\textnormal{JS}}(k), \quad k\ge 0.
\end{equation}
Let $S_n := X_1 + \cdots + X_n$, $n\ge 1$, the associated random walk. Define also, for $p>0$,
\begin{equation} \label{eq: def Tp and Lp}
T_p := \inf\{n\ge 1, \; S_n=-p\},
\quad \text{and} \quad
L_p := \sum_{i=1}^{T_p} \mathds{1}_{\{X_i=-1\}}.
\end{equation}
Notice by \cref{def: non generic critical mu_JS} that, in the non-generic critical case, the random walk lies in the domain of attraction of a spectrally positive $\alpha$--stable process.

We summarise the previous chain of transformations, from pointed Boltzmann maps to random walks, by the following key formula, which is the starting point of the analysis for the multiplicative cascades of Chen, Curien and Maillard (see \cite[Equations (8)--(9)]{chen2020perimeter}). We use the same notation as \cite{chen2020perimeter} for continuity of literature. Let $\Deg^{\downarrow}_f (\frak m)$ be the eventually-zero sequence of degrees of faces in a bipartite planar map $\frak m$, ranked in descending order (with some deterministic way to break ties). Observe by the gasket decomposition (see \eqref{eq: Boltzmann weights} and \eqref{eq: fixed point equation}) that if $\frak g$ denotes the gasket of a loop--$O(n)$ quadrangulation $(\frak q,\bf\ell)$, $\Deg^{\downarrow}_f (\frak g)$ records the perimeters of the outermost loops in $(\frak q, \bf \ell)$, except for some entries (equal to $4$) corresponding to regular quadrangles not crossed by a loop. Likewise, define $\Deg^{\downarrow}_{\bullet} (\mathscr{F})$ to be the (eventually-zero and descending) sequence of degrees of black vertices in a forest of bipartite trees, and $\Deg^{\downarrow}_{\text{out}} (\mathscr{F})$ the sequence of outdegrees (\textit{i.e.} number of children) in a forest of trees. Let $\circ\big(\mathscr{F}^{(p)}_{\text{BDG}}\big)$ be the set of white vertices in $\mathscr{F}^{(p)}_{\text{BDG}}$. Finally, let $\bf X^{(p)}$ be the (eventually-zero, descending) sequence made out of $(X_i+1)_{1\le i\le T_p}$. Under $\bb P$, let $\frak B_{\bf{\hat{g}}}^{(p)}$ and $\frak B_{\bf{\hat{g}}}^{(p), \bullet}$ denote respectively a Boltzmann planar map and a pointed Boltzmann planar map with weights $\bf{\hat{g}}$ and fixed perimeter $2p$.
For all non-negative measurable $\varphi : \bb N^{\bb N^*} \to \bb R$, the previous chain of transformations translates to
\begin{align}
\bb E \Big[\varphi\Big(\frac12 \Deg^{\downarrow}_f \Big(\frak B_{\bf{\hat{g}}}^{(p)}\Big)\Big) \Big]
&=
\frac{1}{\bb E \Big[1/ \# \textsf{Vertex}\Big(\frak B_{\bf{\hat{g}}}^{(p), \bullet}\Big)\Big]} \cdot \bb E \left[\frac{\varphi\Big(\frac12 \Deg^{\downarrow}_f \Big(\frak B_{\bf{\hat{g}}}^{(p), \bullet}\Big)\Big)}{\# \textsf{Vertex}\Big(\frak B_{\bf{\hat{g}}}^{(p), \bullet}\Big)} \right] \notag  \\
&=
\frac{1}{\bb E \Big[1/ \big(1+\# \circ\big(\mathscr{F}^{(p)}_{\text{BDG}}\big)\big)\Big]} \cdot  \bb E \left[\frac{\varphi\Big( \Deg^{\downarrow}_{\bullet} \Big(\mathscr{F}^{(p)}_{\text{BDG}}\Big)\Big)}{ 1+ \# \circ\big(\mathscr{F}^{(p)}_{\text{BDG}}\big)} \right] \quad \text{(BDG bijection)} \notag \\
&=
\frac{1}{\bb E \Big[1/ \big(1+\# \textsf{Leaf}\big(\mathscr{F}^{(p)}_{\text{JS}}\big)\big)\Big]} \cdot  \bb E \left[\frac{\varphi\Big( \Deg^{\downarrow}_{\text{out}} \Big(\mathscr{F}^{(p)}_{\text{JS}}\Big)\Big)}{1+ \# \textsf{Leaf}\big(\mathscr{F}^{(p)}_{\text{JS}}\big)} \right] \qquad \text{(JS trick)} \notag \\
&=
\frac{1}{\bb E [1/(1+L_p)]} \cdot \bb E\bigg[ \frac{\varphi(\bf X^{(p)})}{1+L_p} \bigg] \quad \hspace{2cm} \text{(Łukasiewicz path encoding)}. \label{eq: key formula maps to RW}
\end{align}
We shall use \eqref{eq: key formula maps to RW} several times in our estimates of \cref{sec: discrete Biggins transform} to grasp information on the planar maps from random walk arguments.

%---------------------------------------------------------------------------------------------------------------%
%		                 MULTIPLICATIVE CASCADE
%---------------------------------------------------------------------------------------------------------------%
\subsection{The multiplicative cascade of Chen, Curien and Maillard}
\label{sec: CCM results}
We describe the multiplicative cascade setup of Chen, Curien and Maillard \cite{chen2020perimeter} that is relevant to our work, and present some of the results that we will use for the derivation of \cref{thm: main}. Although they restrict to $n\in(0,2)$, the construction makes sense for all $n\in(0,2]$ with our definitions. We comment on the extension to the case $n=2$ as we go along.

\medskip
\noindent\textbf{Nested loops and perimeter cascade.} We start by encoding the nesting structure of the $O(n)$ loops using the Ulam tree
\[
\cal U := \bigcup_{i\ge 0} (\bb N^*)^i,
\]
where $\bb N =\{0,1,2,\ldots\}$, $\bb N^*= \bb N\setminus \{ 0\}$, and by definition, $(\bb N^*)^0$ contains a single label denoted $\varnothing$. As usual, for $u,v\in \cal U$, we denote by $uv$ the concatenation of $u$ and $v$ (with $\varnothing u = u\varnothing =u$), and by $|u|$ the length or \emph{generation} of $u$ (with $|\varnothing|=0$). For $0\leq i\leq |u|$, we denote by $u_i$ the ancestor of $u$ at generation $i$.

Under \added{$\bb P^{(p)}$}, let $(\frak q, \bf \ell)$ be a loop--$O(n)$ quadrangulation of perimeter $2p$. Then we give labels in $\cal U$ to the loops in $\bf \ell$ in the following way. Add an imaginary loop around the boundary of $\frak q$, and label it with $\varnothing$. Then the outermost loops are considered to be the children of $\varnothing$. More precisely, we rank them in descending order of their perimeters, labelling them with $1, 2, \ldots$ (breaking any potential tie in a deterministic way). Finally, we carry on in the same way for later generations, \textit{i.e.} children of the loop labelled $u\in\cal U$ are the $ui$, $i\in \bb N^*$, ranked by decreasing perimeter. For any label $u\in\cal U$, we define $\chi^{(p)}(u)\in \bb N$ to be the half-perimeter of the loop with label $u$, with the convention that $\chi^{(p)}(u)=0$ if there is no such loop. We call $(\chi^{(p)}(u), u\in \cal U)$ the \textbf{discrete perimeter cascade}. The gasket decomposition (\cref{prop: spatial markov gasket}) ensures that the collection $(\chi^{(p)}(u), u\in \cal U)$ is a \emph{branching Markov chain} under $\bb P^{(p)}$. We shall denote by $(\frak q(u), \bf \ell(u))$ the loop--decorated quadrangulation inside the loop labelled by $u$ (if any), that is the connected component obtained inside the loop $u$ after deleting it together with the edges it crosses. If no such loop exists, we take $(\frak q(u), \bf \ell(u))$ to be the empty map. Note that the half-perimeter of $(\frak q(u), \bf \ell(u))$ is $\chi^{(p)}(u)$. We also write $V(u)$ for the volume of $(\frak q(u), \bf \ell(u))$. As a consequence of the spatial Markov property of the gasket decomposition (\cref{prop: spatial markov gasket}), we have in particular the identity, for all $\ell \ge 1$:
\begin{equation} \label{eq: gasket decomposition summing outermost loops}
\bb E^{(p)} \bigg[ \sum_{|u|=\ell} V(u) \bigg]
=
\bb E^{(p)} \bigg[ \sum_{|u|=\ell} \overline{V}(\chi^{(p)}(u)) \bigg].
\end{equation}

\medskip
\noindent \textbf{Convergence towards the multiplicative cascade.}
For $n\in(0,2)$, Chen, Curien and Maillard \cite{chen2020perimeter} proved the convergence of the discrete perimeter cascade $(\chi^{(p)}(u), u\in \cal U)$ towards the following continuous \textbf{multiplicative cascade}. Let $\zeta$ be a spectrally positive $\alpha$--stable Lévy process, with $\alpha\in(1,2)$ as in \eqref{eq: relation alpha and n}, so that for all $s\ge 0$, {$\bb E[\mathrm{e}^{-q\zeta_s}] = \exp(c_\alpha s q^{\alpha})$} for some constant $c_\alpha>0$ which is irrelevant to the construction. Since $\zeta$ does not drift to infinity, we can define
\begin{equation} \label{eq: def tau}
\tau := \inf\{s>0, \; \zeta_s=-1\} <\infty, \quad \text{a.s.}
\end{equation}
Finally, let $\nu_{\alpha}$ be the probability measure on $(\bb R_+)^{\bb N^*}$ defined by
\begin{equation} \label{eq: def nu_alpha}
\int_{(\bb R_+)^{\bb N^*}} \nu_{\alpha} (\mathrm{d} \mathbf{x}) F(\mathbf{x}) = \frac{\bb E[\frac{1}{\tau} F((\bf{\Delta \zeta})_{\tau}^{\downarrow})]}{\bb E[\frac{1}{\tau}]},
\end{equation}
where $(\bf{\Delta \zeta})_{\tau}^{\downarrow}$ denotes the collection of jumps made by $\zeta$ up to time $\tau$, ranked in descending order. In order to define the multiplicative cascade, we take an i.i.d. collection $((\xi^{(u)}_i)_{i\ge 1}, u\in \cal U)$ with common law $\nu_{\alpha}$. The multiplicative cascade with offspring distribution $\nu_{\alpha}$ is then the collection $(Z_{\alpha}(u), u\in \cal U)$ defined recursively by $Z_{\alpha}(\varnothing)=1$ and for all $u\in\cal U$ and $i\in \bb N^*$, $Z_{\alpha}(ui)= Z_{\alpha}(u)\cdot  \xi_i^{(u)}$. The main result of Chen, Curien and Maillard in \cite{chen2020perimeter} concerns the convergence of the discrete cascade towards the limiting multiplicative cascade as the perimeter goes to infinity. 

\begin{thm}\emph{(\cite[Theorem 1]{chen2020perimeter}.)}\label{thm: CCM convergence cascade}

\noindent Suppose $n\in(0,2)$. The following convergence in distribution holds in $\ell^\infty(\cal U)$:
\[
\frac{1}{p} (\chi^{(p)}(u), u\in \cal U) \xrightarrow[p\to\infty]{(\mathrm{d})} (Z_{\alpha}(u), u\in \cal U).
\]
\end{thm}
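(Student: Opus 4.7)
The result is \cite[Theorem 1]{chen2020perimeter}, and I would follow the strategy outlined there, combining the branching structure of $(\chi^{(p)}(u))_{u\in\cal U}$ with an invariance principle for the random walk $S$ introduced in \eqref{eq: RW Janson--Stefansson tree}, leveraging the key identity \eqref{eq: key formula maps to RW} to transfer map-level quantities to random-walk quantities.

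\textbf{Step 1 (Reduction to finite depth).} Since both $\chi^{(p)}(u)/p$ and $Z_\alpha(u)$ lie in $[0,1]$ by construction, the rescaled discrete cascade takes values in the compact space $[0,1]^{\cal U}$, and convergence in $\ell^\infty(\cal U)$ reduces, via a direct compactness argument, to the joint convergence of the finite-depth marginals $(\chi^{(p)}(u)/p)_{|u|\leq \ell}$ for every fixed $\ell\geq 1$.

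\textbf{Step 2 (First-generation convergence).} The first-generation half-perimeters $(\chi^{(p)}(i))_{i\geq 1}$ agree, up to spurious entries equal to $2$ coming from loop-free quadrangles in the gasket and vanishing after dividing by $p$, with $\tfrac{1}{2}\Deg^{\downarrow}_f$ of the Boltzmann gasket. The identity \eqref{eq: key formula maps to RW} therefore yields, for any bounded continuous test function $\varphi$,
\[
\bb E^{(p)}\Big[\varphi\Big(\tfrac{1}{p}(\chi^{(p)}(i))_{i\geq 1}\Big)\Big]
\;\sim\;
\frac{\bb E\big[\varphi(\bf X^{(p)}/p)/(1+L_p)\big]}{\bb E[1/(1+L_p)]}.
\]
By the tail behaviour of $\mu_{\textnormal{JS}}$ (\cref{def: non generic critical mu_JS}), the walk $S$ lies in the domain of attraction of a spectrally positive $\alpha$-stable process $\zeta$. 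Classical invariance, along the scaling $n=p^{\alpha}$, gives the joint convergence of the rescaled walk $p^{-1}S_{\lfloor p^{\alpha}\cdot\rfloor}$ to $\zeta$, of $T_p/p^{\alpha}$ to $\tau$ (see \eqref{eq: def tau}), of $L_p/p^{\alpha}$ to $\mu_{\textnormal{JS}}(0)\,\tau$ via a law of large numbers on the $-1$-valued steps, and of the ranked sequence $\bf X^{(p)}/p$ to $(\bf{\Delta\zeta})^{\downarrow}_\tau$.

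\textbf{Step 3 (Passing to the limit in the ratio).} Transferring this joint convergence to the ratio above requires uniform integrability of $p^{\alpha}/(1+L_p)$, after which both numerator and denominator pass to the limit; the multiplicative constants cancel, and one recognises the limiting expression
\[
\frac{\bb E\big[\varphi((\bf{\Delta\zeta})_\tau^{\downarrow})/\tau\big]}{\bb E[1/\tau]}
\;=\;\int \varphi(\bf x)\,\nu_\alpha(\mathrm d\bf x)
\]
by the very definition \eqref{eq: def nu_alpha} of $\nu_\alpha$, which identifies the scaling limit of $(\chi^{(p)}(i)/p)_{i\geq 1}$ with the offspring law of the multiplicative cascade.

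\textbf{Step 4 (Induction across generations).} By the spatial Markov property of the gasket decomposition (\cref{prop: spatial markov gasket}), conditionally on generation $\ell$ the subtrees of $(\chi^{(p)}(u))_{u\in\cal U}$ rooted at vertices of $\cal U$ of length $\ell$ are independent, each distributed as a first-generation cascade started from the corresponding perimeter. The continuous cascade $(Z_\alpha(u))_{u\in\cal U}$ shares the analogous branching property by definition. An induction on $\ell$, applying Step 2 to each child and invoking a continuity-in-initial-perimeter statement for the first-generation scaling limit, then closes the argument.

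\textbf{Main obstacle.} The technical heart is securing the uniform integrability of $p^{\alpha}/(1+L_p)$: small values of $L_p$ correspond to atypical excursions of $S$ reaching $-p$ through very few $-1$-steps, for instance via one large jump followed by a short descent. Proving a sharp lower-tail estimate for $L_p$ via a fluctuation analysis of $S$ around its first-passage time $T_p$ is what allows one to commute limit and expectation and to turn the discrete size-biasing by $1/(1+L_p)$ into the continuum size-biasing by $1/\tau$ that defines $\nu_\alpha$.
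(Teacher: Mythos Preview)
This theorem is quoted from \cite{chen2020perimeter} and is not proved in the present paper, so there is no proof here to compare against. Your sketch broadly matches the strategy one would expect from that reference, and the heuristic discussion following the statement in this paper confirms that the random-walk encoding \eqref{eq: key formula maps to RW} and the $1/\tau$-bias are the right ingredients.

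That said, Step~1 contains a genuine error. Neither $\chi^{(p)}(u)/p$ nor $Z_\alpha(u)$ is bounded by~$1$: a hole in the gasket can have degree exceeding the boundary perimeter $2p$, and the ranked jumps of a spectrally positive $\alpha$-stable process on $[0,\tau]$ are unbounded. More importantly, even if the coordinates were bounded, compactness of $[0,1]^{\cal U}$ refers to the product (pointwise) topology, not the $\ell^\infty$ topology, so finite-dimensional convergence alone would still not yield convergence in $\ell^\infty(\cal U)$. One needs a bona fide tightness estimate, typically showing that $\sup_{|u|\ge \ell}\chi^{(p)}(u)/p$ is small uniformly in $p$ for large $\ell$; this is where moment bounds of the type in \cref{prop: CCM convergence biggins} (uniform control of $\sum_{|u|=\ell}(\chi^{(p)}(u)/p)^\theta$ for some $\theta>\alpha$) enter. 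A secondary gap lies in Step~4: applying the first-generation limit from a random, $p$-dependent starting perimeter $\chi^{(p)}(u_\ell)$ requires a uniformity or coupling argument that you only allude to.
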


\noindent One should not be surprised by the appearance of the $\alpha$--stable process in the limiting cascade. Indeed, the loops correspond to (large) faces of the gasket, whose perimeters are encoded by the increments of the random walk $S$ after performing the Janson--Stefánsson trick (see equation \eqref{eq: key formula maps to RW}). Such a random walk is in the domain of attraction of an $\alpha$--stable process (see  \eqref{eq: RW Janson--Stefansson tree}) -- from now on, we can assume that $c_\alpha$ has been chosen so that $S_n$ scales to $\zeta$. Furthermore, again in light of \eqref{eq: key formula maps to RW}, the $\tau^{-1}$ bias in \eqref{eq: def nu_alpha} can be understood as a scaling limit of the $L_p$ term.

\bigskip
\noindent \textbf{Additive martingales and the Biggins transform.}
We gather here for future purposes some additional results that were obtained in \cite{chen2020perimeter}. Notice that $(-\ln(Z_\alpha(u)), u\in \cal U)$ is a branching random walk.
One key feature of the discrete and continuous cascades is therefore the so-called \textbf{Biggins transform}, which captures asymptotic information about the cascades. These are defined respectively \textit{via}
\begin{equation} \label{eq: biggins transform}
\phi^{(p)}(\theta) = \mathbb{E}^{(p)}\bigg[\sum_{\abs{u}=1}\bigg(\frac{\chi^{(p)}(u)}{p}\bigg)^{\theta}\bigg],
\quad
\text{and}
\quad
\phi_{\alpha}(\theta) = \mathbb{E}\bigg[\sum_{\abs{u}=1}(Z_{\alpha}(u))^{\theta}\bigg].
\end{equation}
As it turns out, the limiting Biggins transform $\phi_{\alpha}$ can be calculated explicitly.
\begin{prop} \label{prop: CCM expression of biggins}
 \emph{(\cite[Equation (17)]{chen2020perimeter}.)}

\noindent 
For all $\alpha\in(1, 2)$ and $\theta\in \mathbb{R}$, we have
\[
\phi_{\alpha}(\theta)
=
\begin{cases}
\frac{\sin(\pi(2-\alpha))}{\sin(\pi(\theta-\alpha))}, & \text{if } \theta\in(\alpha,\alpha+1), \\
+\infty, & \text{otherwise}.
\end{cases}
\]
\end{prop}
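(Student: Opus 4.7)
The plan is to evaluate $\phi_{\alpha}(\theta)$ directly from the definition of the offspring law $\nu_{\alpha}$ in \eqref{eq: def nu_alpha}. Since the children of $\varnothing$ satisfy $Z_{\alpha}(i) = \xi_i^{(\varnothing)}$, we have
\[
\phi_{\alpha}(\theta) = \int \nu_{\alpha}(\mathrm d\bf x) \sum_{i\ge 1} x_i^\theta = \frac{\bb E\bigl[\tau^{-1} \sum_{s\le \tau} (\Delta \zeta_s)^\theta\bigr]}{\bb E[\tau^{-1}]},
\]
so everything reduces to an explicit computation on the spectrally positive $\alpha$-stable process $\zeta$ up to its first-passage time $\tau$ of $-1$. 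The first thing to check is divergence outside the strip $(\alpha,\alpha+1)$: for $\theta\le\alpha$, the Lévy measure $\pi(\mathrm dx) \propto x^{-\alpha-1}\mathrm dx$ yields $\int_0^1 x^\theta \pi(\mathrm dx) = \infty$, so the jump sum itself is almost surely infinite; for $\theta\ge\alpha+1$, a single-jump lower bound obtained from the Palm formula below shows that the expectation is infinite.

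For $\theta\in(\alpha,\alpha+1)$, I would use the Palm/compensation formula for the Poisson point process $\cal N$ of jumps of $\zeta$, whose intensity on $(0,\infty)^2$ is $\mathrm ds\,\pi(\mathrm dx)$:
\[
\bb E\Bigl[\sum_{(s,\Delta\zeta_s)\in\cal N} H(s,\Delta\zeta_s,\zeta)\Bigr]
=
\iint \mathrm ds\,\pi(\mathrm dx)\,\bb E\bigl[H(s,x,\zeta + x\mathbf 1_{[s,\infty)})\bigr].
\]
Applying this with $H = x^\theta \tau^{-1}\mathbf 1_{\{s\le\tau\}}$, the strong Markov property at time $s$ on the event $\{\inf_{u\le s}\zeta_u > -1\}$ rewrites the new hitting time of $-1$ as $s + \sigma'_{1+\zeta_s+x}$, where $\sigma'_a$ denotes an independent copy of the first-passage subordinator with Laplace exponent $\Phi(\lambda) = (\lambda/c_\alpha)^{1/\alpha}$. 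Combined with the identity $y^{-1} = \int_0^\infty e^{-\lambda y}\mathrm d\lambda$, everything boils down to a joint integral of $e^{-\lambda s}\,e^{-(1+\zeta_s+x)\Phi(\lambda)}$ against $\mathrm ds\,\pi(\mathrm dx)\,\mathrm d\lambda$. Scaling and repeated changes of variable reduce both the numerator and the denominator to products of Gamma functions; in particular the denominator alone gives $\bb E[\tau^{-1}] = c_\alpha^{1/\alpha}\Gamma(\alpha+1)$.

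Putting everything together, the ratio should take the form
\[
\phi_{\alpha}(\theta) = \frac{\Gamma(\theta-\alpha)\Gamma(\alpha+1-\theta)}{\Gamma(\alpha-1)\Gamma(2-\alpha)},
\]
and the reflection formula $\Gamma(x)\Gamma(1-x)=\pi/\sin(\pi x)$ applied to both numerator and denominator converts this into the stated trigonometric form $\sin(\pi(2-\alpha))/\sin(\pi(\theta-\alpha))$. The main obstacle is the bookkeeping in the Palm step: the dependence of $\tau$ on every jump forces one to split the trajectory at the inserted time $s$ and replace the post-$s$ evolution by an independent hitting-time copy, with subsequent Fubini--Tonelli swaps legitimate precisely because $\theta\in(\alpha,\alpha+1)$. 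A complementary route, well adapted to the setup of the paper, is to compute the discrete Biggins transform $\phi^{(p)}(\theta)$ via \eqref{eq: key formula maps to RW}, using the tail $\mu_{\textnormal{JS}}(k)\sim C_{\textnormal{JS}} k^{-\alpha-1}$ together with local limit estimates for $(T_p,L_p)$, and then pass to the limit $p\to\infty$ to identify $\phi_{\alpha}$ directly.
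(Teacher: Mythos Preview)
The paper does not prove this proposition: it is quoted verbatim from \cite[Equation~(17)]{chen2020perimeter} and no argument is given here. So there is nothing to compare your proposal against in the present paper.

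That said, your outline is a correct strategy and close in spirit to what is done in \cite{chen2020perimeter}. A few remarks. First, the constant you state for the denominator is off: from $\bb E[e^{-\lambda\tau}]=\exp(-(\lambda/c_\alpha)^{1/\alpha})$ and $y^{-1}=\int_0^\infty e^{-\lambda y}\,\mathrm d\lambda$ one gets $\bb E[\tau^{-1}]=c_\alpha\,\Gamma(\alpha+1)$, not $c_\alpha^{1/\alpha}\Gamma(\alpha+1)$; this does not affect the ratio. Second, your Gamma-function expression is right and the reflection formula does yield the stated sine ratio. Third, the delicate step you flag is real: in the Palm identity the functional $H$ depends on $\tau$, which in turn depends on all jumps, so after inserting the extra jump $(s,x)$ one must use the strong Markov property at time $s$ on $\{\inf_{u\le s}\zeta_u>-1\}$ and the subordinator structure of $(\tau_a)_{a\ge 0}$ to decouple; this is exactly how the computation is organised in \cite{chen2020perimeter}, though they phrase the first step via a continuous cycle-lemma/Kemperman identity rather than a raw Palm formula (the two are equivalent here). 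Your alternative ``discrete-then-limit'' route via \eqref{eq: key formula maps to RW} also works and is essentially the content of \cref{prop: CCM convergence biggins} combined with the Riemann-sum calculation appearing around \eqref{eq: sum uniformly bounded}.
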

\noindent It is standard that, by the branching property, $\phi_\alpha$ paves the way for \emph{additive martingales} (see \cite[Section 4.1]{chen2020perimeter}). Of special importance is the so-called \textbf{Malthusian martingale}, which corresponds to the minimal solution $\theta_{\alpha}\in(\alpha,\alpha+1)$ of $\phi_{\alpha}(\theta)=1$. From \cref{prop: CCM expression of biggins}, it is easily seen that $\theta_{\alpha} = \min(2, 2\alpha-1)$, leading to the martingale
\begin{equation} \label{eq: def martingale W_ell}
W_{\ell} := \sum_{|u|=\ell} (Z_{\alpha}(u))^{\theta_{\alpha}}, \quad \ell\ge 0.
\end{equation}
Observe that the expression of $\theta_\alpha$ matches that of the growth exponent for the mean volume in Budd's asymptotics \eqref{eq: mean volume}.
The value of $\theta_{\alpha}$ is such that $\theta_{\alpha}=2$ in the dilute case ($\alpha>3/2$), while $\theta_{\alpha}=2\alpha-1$ in the dense case ($\alpha<3/2$). In the $O(2)$ model, where $\alpha = 3/2$, the two exponents collapse to a single one, $\theta_\alpha = 2$.
Chen, Curien and Maillard proved the convergence of the martingale $(W_{\ell})_{\ell\ge 0}$ and determined the law of the limit $W_{\infty}$, \textit{cf.} \cref{thm: CCM convergence martingales}.
In addition, they proved the following convergence result for $n\in (0,2)$. {One} can check that the proof transfers without change to the case $n=2$ once \cref{prop: CCM convergence gen 1} is established. Let $\cal U_{\ell} := \{u\in\cal U, \; |u|=\ell\}$, $\ell\in \bb N^*$. 

\begin{prop} \label{prop: CCM convergence fixed gen}
\emph{(\cite[Proposition 15 and Lemma 16]{chen2020perimeter}.)}

\noindent Let $\ell\in \bb N^*$ and $n\in(0,2]$. The following convergence in distribution holds in $\ell^{\theta}(\cal U_{\ell})$ for all $\theta>\alpha$:
\[
\frac{1}{p} (\chi^{(p)}(u), u\in \cal U_{\ell}) \xrightarrow[p\to\infty]{\mathrm{(d)}} (Z_{\alpha}(u), u\in \cal U_{\ell}).
\]
\end{prop}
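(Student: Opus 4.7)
My plan is to proceed by induction on $\ell$, upgrading the $\ell^\infty$--convergence of Chen, Curien and Maillard to $\ell^\theta$--convergence by establishing tightness in $\ell^\theta$, using Proposition \ref{prop: CCM convergence biggins} as the main input.

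For the base case $\ell=1$, finite-dimensional marginal convergence follows for free from the $\ell^\infty$--convergence, so only tightness in $\ell^\theta(\cal U_1)$ remains to be shown. Fix an auxiliary exponent $\theta' \in (\alpha, \min(\theta, \alpha+1))$, non-empty since $\theta > \alpha$. Since outermost loops satisfy $\chi^{(p)}(u) \leq p$, one has $\chi^{(p)}(u)/p\in[0,1]$, hence $(\chi/p)^\theta \leq (\chi/p)^{\theta'}$, and Proposition \ref{prop: CCM convergence biggins} yields the uniform $L^1$--bound
\[
\bb E^{(p)}\Big[ \|(\chi^{(p)}(u)/p)_{|u|=1}\|_{\ell^\theta}^\theta \Big] \leq \bb E^{(p)}\bigg[\sum_{|u|=1}(\chi^{(p)}(u)/p)^{\theta'}\bigg] \underset{p\to\infty}{\longrightarrow} \phi_\alpha(\theta') < \infty.
\]
For tails, the descending ordering of siblings $x_1 \geq x_2 \geq \cdots$ combined with $K x_{K+1}^{\theta'} \leq \sum_k x_k^{\theta'}$ gives
\[
\sum_{k>K} x_k^\theta \leq x_{K+1}^{\theta-\theta'}\sum_k x_k^{\theta'} \leq K^{-(\theta-\theta')/\theta'}\bigg(\sum_k x_k^{\theta'}\bigg)^{\theta/\theta'},
\]
uniformly tight in $p$ and vanishing as $K\to\infty$. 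Norm bound plus equi-tail yield tightness in $\ell^\theta(\cal U_1)$, and Prokhorov together with uniqueness of the limit closes the base case.

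For the inductive step, the branching Markov property of $(\chi^{(p)}(u), u\in\cal U)$ implied by Proposition \ref{prop: spatial markov gasket} ensures that, conditionally on the first generation, the subtrees rooted at $u\in\cal U_1$ are independent and distributed under $\bb P^{(\chi^{(p)}(u))}$. Applying the induction hypothesis inside each heavy outermost loop ($\chi^{(p)}(u)\to\infty$) produces independent limiting subcascades $(Z^{(u)}_\alpha(v), |v|=\ell)$, and the multiplicative relation $Z_\alpha(uv) = Z_\alpha(u)\cdot Z^{(u)}_\alpha(v)$ identifies the limit at generation $\ell+1$. Tightness propagates to generation $\ell+1$ by the same norm--and--tail argument, relying on the iterated Biggins identity
\[
\bb E^{(p)}\bigg[\sum_{|w|=\ell+1}(\chi^{(p)}(w)/p)^{\theta'}\bigg] \underset{p\to\infty}{\longrightarrow} \phi_\alpha(\theta')^{\ell+1},
\]
itself derived by a parallel induction on $\ell$ using the conditioning on the first generation.

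The main obstacle is the treatment of the ``light'' first-generation loops with $\chi^{(p)}(u) = O(1)$, about which the induction hypothesis is silent. I would handle this by a truncation argument: restrict to loops $\chi^{(p)}(u)\geq \delta p$ where the induction applies directly, control the remainder uniformly in $p$ using the $\ell^\theta$--tightness established in the base case (applied at each generation along the tree), and conclude by sending $\delta\downarrow 0$, leveraging the continuity in $\delta$ of the limiting cascade.
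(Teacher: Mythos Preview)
The paper does not prove this statement: it is quoted verbatim from \cite[Proposition~15]{chen2020perimeter} as background, so there is no ``paper's own proof'' to compare against. Your sketch is broadly in the spirit of how one would argue (and how the original reference does), but it contains a factual error about the model.

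You assert that ``outermost loops satisfy $\chi^{(p)}(u)\le p$''. This is false. In the gasket decomposition the outermost loops are holes in a Boltzmann planar map of perimeter $2p$, and such holes can have arbitrarily large degree; equivalently, in the random-walk encoding \eqref{eq: key formula maps to RW} the hole half-perimeters are the $X_i+1$, which are unbounded. Hence $\chi^{(p)}(u)/p$ is not confined to $[0,1]$, and your inequality $(\chi/p)^\theta\le(\chi/p)^{\theta'}$ is not valid. Fortunately the conclusion you want survives: the elementary $\ell^p$--inclusion $\|x\|_{\ell^\theta}\le\|x\|_{\ell^{\theta'}}$ for $\theta>\theta'$ gives
\[
\bb E^{(p)}\Big[\,\|(\chi^{(p)}(u)/p)_{|u|=1}\|_{\ell^\theta}^{\theta'}\Big]
\;\le\;
\bb E^{(p)}\bigg[\sum_{|u|=1}(\chi^{(p)}(u)/p)^{\theta'}\bigg]
\;\underset{p\to\infty}{\longrightarrow}\;\phi_\alpha(\theta'),
\]
which yields tightness of the norm directly without any pointwise bound on the entries. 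Your tail estimate $\sum_{k>K}x_k^\theta\le K^{-(\theta-\theta')/\theta'}\big(\sum_k x_k^{\theta'}\big)^{\theta/\theta'}$ is correct and unaffected by this issue. With this correction the base case goes through, and your inductive step (truncation at $\delta p$, branching Markov property, iterated Biggins bound) is a reasonable if terse outline of the remaining work.
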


%-----------------------------------------------------------------------------------%
%                       DERIVATIVE MARTINGALE
%-----------------------------------------------------------------------------------%
\subsection{The derivative martingale of the multiplicative cascade}
\label{sec:bdry case derivative}

In this section, we work under the assumption that $n=2$, hence $\alpha = 3/2$ and $\theta_\alpha=2$. This case corresponds to the so-called {\it boundary case} in the setting of branching random walks \cite{BigginsJ.D.2005Fpot}, in which the Biggins transform \eqref{eq: biggins transform} satisfies  $\phi'_\alpha(\theta_\alpha)=0$.

{
\bigskip
\noindent \textbf{Boundary case of the multiplicative cascade.}
We start by recalling a few properties of branching random walks in the boundary/non-boundary cases. We only work in a restrictive setup and refer to \cite{BigginsJ.D.2005Fpot,shi2016branching} for details and more general results. Moreover, to stick to the framework of our paper, we will describe these in terms of a generic multiplicative cascade $X$ rather than branching random walks. Let $\phi_X$ its Biggins transform, defined as in \eqref{eq: biggins transform} with $X$ in place of $Z_\alpha$. We assume that $\phi_X$ is finite on an interval $(a,b) \subset \bb R_+$ and that there exists a value $\theta_X \in (a,b)$ such that $\phi_X(\theta_X)=1$. Since $\phi_X$ is convex, one may further assume that $\phi_X'(\theta_X) \leq 0$ by choosing the smallest root of $\phi_X-1$ (provided $\phi_X$ is non-constant). By the branching property, one can see that the process
\[
W_{\ell}^X := \sum_{|u|=\ell} X(u)^{\theta_X}, \quad \ell\ge 0,
\]
is a martingale. Since it is non-negative, it must converge almost surely to some limit $W_\infty^X$ as $\ell\to\infty$. It is then important to know whether $W_\infty^X$ is degenerate or not.} 

{The \emph{non-boundary case} corresponds to $\phi_X'(\theta_X)<0$. In this case, the Biggins martingale convergence theorem \cite[Theorem 3.2]{shi2016branching} implies that $W_\infty^X$ is non-degenerate and in fact $W_\ell^X \to W_\infty^X$ in $L^1$. For future reference, we also recall from \cite[Theorem 1.3]{shi2016branching} that in this case, the extremal particles in the cascade decay exponentially: there exists $\upsilon>0$ such that on the event of non-extinction,
\begin{equation} \label{eq: extremal asympt}
 \frac{1}{\ell} \ln \Big(\sup_{|u|=\ell}  X(u)\Big ) \rightarrow - \upsilon,
 \qquad \text{almost surely as } \ell\to\infty,
\end{equation}
which implies that $\sup_{|u|=\ell}  X(u) \to 0$ almost surely as $\ell\to\infty$.}

{
In the \emph{boundary case}, however, $\phi_X'(\theta_X)=0$ and the Biggins martingale convergence theorem yields that $W_\infty^X=0$ almost surely. Therefore, the convergence of $W_\ell^X$ towards $W_\infty^X$ does not provide precise enough information, and one needs to look at the next order term. In fact, because of the assumption $\phi_X'(\theta_X)=0$, the process
\[
D_{\ell}^X := - \sum_{|u|=\ell} X(u)^{\theta_X}\ln (X(u)), \quad \ell\ge 0,
\]
obtained by ``differentiating'' $W^X_\ell$ with respect to the exponent $\theta_X$, is now a martingale. This martingale is no longer non-negative, but under our assumptions it is possible to show that it converges almost surely to some non-negative random variable $D_\infty^X$. Under mild conditions, this limit is non-degenerate. We provide a more detailed account on these conditions in the case of $Z_\alpha$ in the next paragraph.
}

{
\bigskip
\noindent \textbf{The law of the derivative martingale.}
}
The derivative martingale as introduced in \eqref{eq: intro def martingale D_ell} has the expression
\[
D_{\ell} := - 2 \sum_{|u|=\ell} (Z_{\alpha}(u))^{2}\ln (Z_{\alpha}(u)), \quad \ell\ge 0.
\]

\noindent Under the assumption 
\begin{equation} \label{eq: log^2 condition}
\mathbb{E}\bigg[\sum_{\abs{u}=1}(Z_{\alpha}(u))^{2}(\ln Z_{\alpha}(u))^{2}\bigg] < \infty,
\end{equation}

\noindent (which holds since $\phi_\alpha(\theta)<\infty$ in a neighborhood of $\theta_\alpha=2$), $D_\infty:=\lim_{\ell\to\infty} D_\ell$ exists a.s. and is non-negative. The necessary and sufficient condition for $D_\infty>0$ on the event of non-extinction is provided respectively by \cite{ChenXinxin2015Anas} and \cite{aidekonminimum}, and reads
\begin{equation}\label{derivative martingale condition}
\mathbb{E}[W_1(\ln_+ W_1)^2] < \infty \quad \text{and} \quad \mathbb{E}[Y\ln_+Y] < \infty,
\end{equation}

\noindent for $Y = \sum_{|u|=1}(Z_{\alpha}(u))^2\ln_+ \Big(\frac{1}{Z_{\alpha}(u)}\Big)$. Here and throughout $\ln_+(x) = \max(0, \ln x)$. 

\begin{prop}\label{p:derivative martingale}
    We have $D_\infty>0$ a.s. and $\frac{1}{D_\infty}$ is exponentially distributed with parameter $1$.
\end{prop}

\begin{proof}
    A sufficient condition for {condition} \eqref{derivative martingale condition} is the existence of $\eta>0$ such that
    \[
    \bb E\bigg[ \bigg( \sum_{|u|=1}(Z_{\alpha}(u))^{\theta}\bigg)^{1+\eta}\bigg]<\infty,
    \]

    \noindent for $\theta$ in a neighborhood of $\theta_\alpha=2$. It is a consequence of \cref{thm: tail biggins} and \cref{prop: CCM convergence fixed gen}. 
    Now let
    \[\psi(x) = \psi_{\frac32, 2}(x) = \int^{\infty}_0 \mathrm{e}^{-xy-\frac{1}{y}} \frac{\mathrm{d} y}{y^2} = \int^{\infty}_0 \mathrm{e}^{-\frac{x}{t} - t}\mathrm{d}t, \]

    \noindent as in \eqref{eq: intro psi}. Then $\psi(x)$ is the Laplace transform of the inverse-exponential distribution with parameter $1$. By \cite[Lemma 10]{chen2020perimeter}, $\psi$ satisfies the following functional equation:
    \[\psi(x) = \mathbb{E}\bigg[\prod^{\infty}_{i=1} \psi\big(x(Z_{\frac32}(i))^2\big)\bigg], \quad x\ge 0. \] 
    \noindent This equation is a particular case of the so-called \emph{smoothing transform}. By \cite[Theorem 1.5]{AlsmeyerGerold2022ASMt} (with $T_j = (Z_{\alpha}(j))^2$, $\alpha = r = 1$ in their setup), such a solution $\psi$ {must be the Laplace transform of $D_\infty$}, up to a scaling factor. Hence $1/D_{\infty}$ has exponential distribution. By \cite{Buraczewskitailderivative}, $\bb P(D_\infty \ge x)\sim \frac{1}{x}$ as $x\to\infty$, which shows that the parameter of the exponential distribution is $1$.   
\end{proof}

%---------------------------------------------------------------------------------------------------------------%
%		             		 THE KEMPERMAN FORMULA
%---------------------------------------------------------------------------------------------------------------%
\subsection{Estimates for the left-continuous random walk $S$ and Kemperman's formula}\label{sec: estimtate r.w.}

We first gather some estimates for the times $T_p$ and $L_p$ of \eqref{eq: def Tp and Lp} that will be helpful in light of formula \eqref{eq: key formula maps to RW}. They can be seen as consequences or analogues of \cite[Section 2.3.1]{chen2020perimeter}.  By \cite[Lemma 2.1]{AfanasyevV.I.2005CfBP} and its proof,
\begin{equation}\label{eq:doney}
\bb P(T_1 > k)=\bb P\Big(\min_{0\le j\le k} S_j\ge 0\Big) \sim   k^{-1/\alpha} \ell(k), \qquad \bb P\Big(\max_{0\le j\le k} S_j\le 0 \Big) \sim   k^{-(1-1/\alpha)} \overline{\ell}(k),
\end{equation}

\noindent where $\ell, \overline{\ell}$ are slowly varying functions such that $\ell(k)\overline{\ell}(k)$ converges to a positive constant as $k$ goes to infinity. 
{Recall the two Cases~\ref{caseA} and~\ref{caseB} for the offspring distribution $\mu_{\textnormal{JS}}$ in \eqref{def: non generic critical mu_JS}.}
Using \cite[Theorem 1]{doney1982exact} for the asymptotics of $\bb P(\max_{0\le j\le k} S_j\le 0)$, we deduce the {more precise asymptotics}
\begin{equation}\label{eq: tail T1}
\bb P(T_1 > k) \sim 
\begin{cases}
    C k^{-1/\alpha}, & \text{ \textnormal{in} {Case}~\ref{caseA},}\\
    C (k\ln (k))^{-2/3}, & \text{ \textnormal{in} {Case}~\ref{caseB}, }
\end{cases}
\quad \text{as } k\to\infty.
\end{equation}

\noindent It will be convenient to set 
\begin{equation}\label{def: fp}
f(p) = 
\begin{cases}
    p^{\alpha}, & \text{ \textnormal{in} {Case}~\ref{caseA},}\\
    p^{3/2}(\ln p + 1)^{-1}, & \text{ \textnormal{in} {Case}~\ref{caseB}.}
\end{cases}
\end{equation}

\noindent Observe that $f(p)^{-1/\alpha} \ell(f(p)) \sim C/p$ as $p$ goes to infinity.  By the invariance principle applied to the random walk $(T_p,\,p\ge 1)$, $\frac{T_p}{f(p)}$ converges in distribution towards $\tau$, the hitting time of $-1$ by the stable process $\zeta$, a fact that  could also be deduced from the invariance principle of the random walk $S$. On the other hand, using that for all $k\ge p$, $\bb P(T_p\le k) \le \bb P(T_1\le k)^p$, \cref{eq: tail T1} shows that $\frac{f(p)}{T_p}$ is bounded in any $L^r$, $r\ge 1$ and in particular 
\[
f(p)\mathbb{E}\Big[\frac{1}{T_p}\Big] \underset{p\to \infty}\longrightarrow  \bb E[1/\tau].
\]

\noindent Since $L_p=\sum_{i=1}^{T_p} \mathds{1}_{\{X_i=-1\}}$, the law of large numbers entails that $\frac{L_p}{T_p} \to \bb P(X_1=-1)=\mu_{\text{JS}}(0)$ in probability. The following inequality is Equation (14) of \cite[Lemma 5]{chen2020perimeter} whose proof transfers to the case $n=2$ without change: there exists a constant $c>0$, for all $K\ge 2/\mu_{\textnormal{JS}}(0)$, and all $p\ge 1$,
\begin{equation} \label{eq: exponential tail Tp/Lp}
\bb P\bigg(\frac{T_p}{L_p}\ge K \bigg) \le c^{-1} \mathrm{e}^{-cKp}.
\end{equation}

\noindent As a consequence, we have the following convergence: 
\begin{equation} \label{eq: L1 convergence Lp}
f(p)\mathbb{E}\Big[\frac{1}{1+L_p}\Big] \underset{p\to \infty}\longrightarrow \mu_{\textnormal{JS}}(0)^{-1} \bb E[1/\tau],
\end{equation}

\begin{equation} \label{eq: L1 convergence Tp/Lp}
 \mathbb{E}\Big[\frac{T_{p}}{1+L_{p}} \Big] \underset{p\to\infty}{\longrightarrow} \mu_{\textnormal{JS}}(0)^{-1}.
\end{equation}

It will also be convenient to state here for future reference the following technical proposition, which expresses the exchangeability of the increments of the random walk $(S_k)_{k \ge 1}$ in \eqref{eq: RW Janson--Stefansson tree}. Such identities are valid for left-continuous random walks, in the sense that the increments satisfy a.s. $X_i \ge -1$ for all $i\ge 1$. They already appear in the work of Chen, Curien and Maillard \cite{chen2020perimeter}; for more general context, see also \cite[Section 6.1]{pitman2006combinatorial} and references therein. 

\begin{prop}\label{prop: kemperman} \emph{(\cite[Theorem 2 \& Proposition 6]{chen2020perimeter}.)}
Assume that $T_1<\infty$ a.s. For any positive measurable function $f:\mathbb{Z}\to \bb R$ and any $p\ge 2$,
\[
 \bb E \bigg[\frac{1}{T_p-1}\sum_{i=1}^{T_p} f(X_i)\bigg]
 =
 \bb E\bigg[\frac{p}{p+X_1} f(X_1)\bigg]. 
\]

\noindent More generally, if moreover $g:\bigcup_{j=1}^\infty \mathbb{Z}^j\to \bb R_+$ is a symmetric measurable function,
\[
 \bb E \bigg[\frac{1}{T_p-1}\sum_{i=1}^{T_p} f(X_i)g((X_j)_{j\neq i,j\le T_p})\bigg]
 =
 \bb E\bigg[\frac{p}{p+X_1} f(X_1) \bb E[g((X_j)_{j\le T_q})]_{q=p+X_1}\bigg].
\]
\end{prop}

%---------------------------------------------------------------------------------------------------------------%
%		             		      TAIL BIGGINS
%---------------------------------------------------------------------------------------------------------------%

\section{Discrete Biggins transform and discrete martingales}\label{sec: discrete Biggins transform}

\subsection{Uniform tail estimates for the discrete Biggins transform}
\label{sec: tail estimates Biggins}
Recall the definition of the discrete Biggins transform
\[\phi^{(p)}(\theta) = \mathbb{E}^{(p)}\bigg[\sum_{\abs{u}=1}\bigg(\frac{\chi^{(p)}(u)}{p}\bigg)^{\theta}\bigg]. \]
Chen, Curien and Maillard proved the convergence of $\phi^{(p)}(\theta)$ for $\theta\in(\alpha,\alpha+1)$ towards the Biggins transform $\phi_\alpha(\theta)$ of the multiplicative cascade, namely
\[\phi_{\alpha}(\theta) = \mathbb{E}\bigg[\sum_{\abs{u}=1}(Z_{\alpha}(u))^{\theta}\bigg]. \]
In this section, we establish uniform tail estimates for the discrete Biggins transform $\phi^{(p)}$. These estimates will be helpful to control the size of the offspring of typical loops in the discrete multiplicative cascade $(\chi^{(p)}(u),u\in\cal U)$. As we shall see, this will enable us to rule out the contribution of a \emph{bad region} of the map to the volume in the scaling limit as $p\to \infty$ (namely those which do not satisfy the \emph{moderate increments} property of \cref{def: bad vertices}). We first consider the tail of sums related to the Biggins transform in the following lemma.

\begin{lem} \label{lem: tail power RW}
Let $\theta\in (\alpha,\alpha+1)$ and $r\in (0, 1/(\alpha+1))$. There exists a constant $C>0$ such that, for all $p\ge 2$ and $A>1$, %and $K\ge 1$,
\[
\mathbb{P}\bigg(\sum^{T_{p}}_{i=1}\left(\frac{X_i+1}{p}\right)^\theta>A\bigg)
\le
C  A^{- r }.
\]
\end{lem}

\begin{proof}
Let $\eps \in(0,1)$ (to be determined later) and $f(p)$ as defined in \cref{def: fp}. We now split the above probability as
\begin{align}
&\mathbb{P}\bigg(\sum^{T_p}_{i=1}\left(\frac{X_i+1}{p}\right)^{\theta}>A \bigg) \notag \\
&=
\mathbb{P}\bigg(\sum^{T_p}_{i=1}\left(\frac{X_i+1}{p}\right)^{\theta}>A, \, T_p \leq\eps^{-1} f(p)\bigg)
+ \mathbb{P}\bigg(\sum^{T_p}_{i=1}\left(\frac{X_i+1}{p}\right)^{\theta}>A, \, T_p > \eps^{-1} f(p)\bigg) \notag \\
&\leq \mathbb{P}\bigg(\frac{1}{T_p}\sum^{T_p}_{i=1}\left(\frac{X_i+1}{p}\right)^{\theta}>\eps f(p)^{-1} A \bigg) + \mathbb{P}(T_p> \eps^{-1} f(p)). \label{eq: Delta_2 split}
\end{align}

\noindent By Equation (2.2) in \cite[Lemma 2.1]{AfanasyevV.I.2005CfBP}, there exists a constant $C>0$ such that for all $p\geq 1$ and $n\geq 1$, in the notation of \cref{eq:doney}, 
\[\mathbb{P}(T_p > n) = \mathbb{P}(\min_{1\leq i\leq n} S_i \geq - {p}) \leq C pn^{-1/\alpha}\ell(n). \]  

%\noindent  By the expression of $\ell$ (or Karamata representation theorem)
\noindent {Since $\ell$ is slowly varying,} for all $\delta>0$, one can find  $C>0$ such that $\ell(ax)\le C a^\delta \ell(x)$ for all $a,x\ge 1$. Fix $\delta>0$. By our choice of $f(p)$,  the second term of \eqref{eq: Delta_2 split} is {thus} bounded by
\begin{equation}\label{eq: Delta_2 bound second}
\mathbb{P}(T_p> \eps^{-1} f(p))
\le
C  \eps^{-\delta}\eps^{1/\alpha}.
\end{equation}

\noindent On the other hand, we can handle the first term of \eqref{eq: Delta_2 split} by Markov's inequality and Proposition \ref{prop: kemperman}:
\begin{multline*}
\mathbb{P}\bigg(\frac{1}{T_p}\sum^{T_p}_{i=1}\left(\frac{X_i+1}{p}\right)^{\theta}>\eps f(p)^{-1} A \bigg) \\
\le
\frac{f(p)p^{-\theta}}{\eps A} \bb E \bigg[ \frac{1}{T_p}\sum^{T_p}_{i=1}(X_i+1)^{\theta}\bigg]
\le
\frac{f(p)p^{-\theta}}{\eps A} \bb E \left[ (X_1+1)^{\theta}\frac{p}{p+X_1}\right].
\end{multline*}
Recall from \eqref{def: non generic critical mu_JS} that there are two cases for the tail of the random walk. 
In {Case}~\ref{caseA}, the step distribution of the walk satisfies $\mu_{\textnormal{JS}}(k) \le C k^{-\alpha-1}$, so that the expectation
\begin{equation} \label{eq: sum uniformly bounded}
f(p)p^{-\theta} \bb E \bigg[ (X_1+1)^{\theta}\frac{p}{p+X_1}\bigg]
=
p^\alpha \sum_{k=1}^{\infty} \mu_{\textnormal{JS}}(k)\left(\frac{k}{p}\right)^{\theta}\frac{p}{p+k-1}
\added{\le 
\frac{1}{p} \sum_{k=1}^{\infty} \left(\frac{k}{p}\right)^{\theta-\alpha-1}\frac{1}{1+\frac{k-1}{p}},}
\end{equation}
is uniformly bounded over $p\ge 1$, \added{as a Riemann sum}. In {Case}~\ref{caseB}, the step distribution satisfies $\mu_{\textnormal{JS}}(k) \le C k^{-5/2}(\ln k + 1)$, the expectation
\begin{align*}
f(p) p^{-\theta} \bb E \bigg[ (X_1+1)^{\theta}\frac{p}{p+X_1}\bigg]
&=
\frac{p^{3/2}}{(\ln p+1)}\sum_{k=1}^{\infty} \mu_{\textnormal{JS}}(k)\left(\frac{k}{p}\right)^{\theta}\frac{p}{p+k-1} \\
&\le 
\frac{p^{3/2}}{(\ln p+1)}\sum_{k=1}^{\infty}{k^{-5/2}} \left(\frac{k}{p}\right)^{\theta}\frac{p}{p+k-1} \Big(\ln \frac{k}{p} + \ln p + 1\Big)
\\
&\added{\le 
\frac{C}{p} \sum_{k=1}^{\infty} \left(\frac{k}{p}\right)^{\theta-5/2}\frac{1}{1+\frac{k-1}{p}} + \frac{C}{p(\ln p + 1)} \sum_{k=1}^{\infty} \left(\frac{k}{p}\right)^{\theta-5/2}\frac{\ln(k/p)}{1+\frac{k-1}{p}},}
\end{align*}
is again uniformly bounded over $p\ge 1$, since the first term is a Riemann sum and the second term is $1/\ln p$ multiplied by a Riemann sum. In any case, we end up with the simple bound
\begin{equation} \label{eq: Delta_2 bound first}
\mathbb{P}\bigg(\frac{1}{T_p}\sum^{T_p}_{i=1}\left(\frac{X_i+1}{p}\right)^{\theta}>\eps f(p)^{-1}A \bigg)
\le
\frac{C}{\eps A}.
\end{equation}
Combining \eqref{eq: Delta_2 split}, \eqref{eq: Delta_2 bound second} and \eqref{eq: Delta_2 bound first}, we have obtained the following bound:
\begin{equation} \label{eq: tail sum X_i final eps}
\mathbb{P}\bigg(\sum^{T_p}_{i=1}\left(\frac{X_i+1}{p}\right)^{\theta}>A\bigg)
\le
C\eps^{-\delta}\varepsilon^{1/\alpha} + \frac{C}{\varepsilon A}.
\end{equation}
Take $\eps = A^{-\alpha/(\alpha+1-\alpha \delta)}$. Plugging this relation into \eqref{eq: tail sum X_i final eps} implies 
\[
\mathbb{P}\bigg(\sum^{T_p}_{i=1}\left(\frac{X_i+1}{p}\right)^{\theta}>A\bigg)
\le
C  A^{-\frac{1-\alpha\delta }{\alpha+1-\delta\alpha}},
\]
for some constant $C>0$. It yields the desired upper bound since $\delta>0$ can be taken arbitrarily small.
\end{proof}

\begin{thm}\label{thm: tail biggins}
Let $\theta\in (\alpha,\alpha+1)$, $\gamma_0 = \min(\frac{\alpha+1-\theta}{\theta},\frac{1}{\alpha+1})$ and $\eta\in (0, \gamma_0)$. There exists a  constant $C>0$ such that for all $p\ge 2$, 
\[\mathbb{E}^{(p)}\bigg[\bigg(\sum_{\abs{u}=1} \bigg(\frac{\chi^{(p)}(u)}{p}\bigg)^\theta \bigg)^{1+\eta}\bigg] \leq C.\]
\end{thm}
\begin{proof} 
We allow the constant $C>0$ to vary from line to line in the proof. Recall that the faces of the gasket $\frak g$ of $(\frak q,\bf\ell)$ correspond to loops of $(\frak q,\bf\ell)$, except possibly for some faces of degree $4$.
By \eqref{eq: key formula maps to RW}, we therefore get
\begin{equation}\label{eq: main tail biggins}
     \mathbb{E}^{(p)}\bigg[\bigg(\sum_{\abs{u}=1} \bigg(\frac{\chi^{(p)}(u)}{p}\bigg)^{\theta}\bigg)^{1+\eta}\bigg]
 \le \frac{1}{\mathbb{E}\big[\frac{1}{1+L_p}\big]}\mathbb{E}\bigg[\frac{1}{1+L_p}\bigg(\sum^{T_p}_{i=1}\left(\frac{X_i+1}{p}\right)^{\theta} \bigg)^{1+\eta}\bigg],
\end{equation}

\noindent so that we only need to bound the right-hand side. It is easier to first bound the expectation
\[
\bb E\bigg[\frac{1}{T_p-1}\bigg(\sum^{T_p}_{i=1}\left(\frac{X_i+1}{p}\right)^{\theta} \bigg)^{1+\eta}\bigg].
\]

\noindent Since $(a+b)^\eta\le a^\eta +b^\eta$ for $\eta\in(0,1)$ and $a,b>0$,  the latter expectation is smaller than
\[
\bb E\bigg[\frac{1}{T_p-1}\sum^{T_p}_{i=1}\left(\frac{X_i+1}{p}\right)^{\theta(1+\eta)} \bigg]+ \bb E \bigg[\frac{1}{T_p-1}\sum^{T_p}_{i=1}\left(\frac{X_i+1}{p}\right)^{\theta}\bigg( \sum_{j=1}^{T_p} \left(\frac{X_j+1}{p}\right)^\theta\mathds{1}_{\{j\neq i\}}\bigg)^\eta \bigg].
\]

\noindent  By Proposition \ref{prop: kemperman}, for all $p\ge 2$, these terms are respectively equal to
    \begin{equation} \label{eq: tail biggins term1}
\bb E\bigg[ \left(\frac{X_1+1}{p}\right)^{\theta(1+\eta)} \frac{p}{p+X_1} \bigg] 
\le
\bb E\bigg[ \left(\frac{X_1+1}{p}\right)^{\theta} \left(\frac{p+X_1}{p}\right)^{\theta \eta -1 } \bigg], 
    \end{equation}

\noindent and
\begin{equation}\label{eq:tail biggins term2}
\mathbb{E}\bigg[\left(\frac{X_1+1}{p}\right)^{\theta}  \frac{p}{p+X_1}\mathbb{E}\bigg[\left( \sum_{j=1}^{T_{q}} \left(\frac{X_j+1}{p}\right)^\theta\right)^\eta\bigg]_{q=p+X_1} \bigg].
\end{equation}
   
\noindent By Lemma \ref{lem: tail power RW}, using that $\eta(\alpha+1)<1$, there exists a constant $C>0$ such that for all $q\ge 2$,
\[
\mathbb{E}\bigg[\bigg( \sum_{j=1}^{T_{q}} (X_j+1)^\theta \bigg)^\eta\bigg] \le C q^{^{\theta \eta }}. 
\]

\noindent We obtain that \eqref{eq:tail biggins term2} is also smaller than a constant times
\[
\bb E\bigg[ \left(\frac{X_1+1}{p}\right)^{\theta} \left(\frac{p+X_1}{p}\right)^{\theta \eta - 1} \bigg].
\]

\noindent Combining this with \eqref{eq: tail biggins term1}, we deduce
\[
\bb E\bigg[\frac{1}{T_p-1}\bigg(\sum^{T_p}_{i=1}\left(\frac{X_i+1}{p}\right)^{\theta} \bigg)^{1+\eta}\bigg]
\le
C \bb E\bigg[ \left(\frac{X_1+1}{p}\right)^{\theta} \left(\frac{p+X_1}{p}\right)^{\theta \eta - 1} \bigg].
\]

\noindent Now
\begin{equation}\label{eq:tail biggins term3}
\bb E\bigg[ \left(\frac{X_1+1}{p}\right)^{\theta} \left(\frac{p+X_1}{p}\right)^{\theta \eta - 1} \bigg] \leq  C\sum^{\infty}_{k=1} \left(\frac{k}{p}\right)^{\theta}\left(\frac{k}{p} + 1\right)^{\theta \eta - 1}\mu_{\textnormal{JS}}(k). 
\end{equation}

\noindent In {Case}~\ref{caseA}, $\mu_{\textnormal{JS}}(k)\leq Ck^{-\alpha-1}$. Thus \eqref{eq:tail biggins term3} is smaller than 
\[Cp^{-\alpha{-1}} \cdot \sum^{\infty}_{k=1} \left(\frac{k}{p}\right)^{\theta-\alpha-1} \left(\frac{k}{p} + 1\right)^{\theta\eta-1}\leq C p^{-\alpha}, \]

\noindent since the summation is a Riemann sum, by our choice of $0<\eta<\frac{\alpha+1-\theta}{\theta}$. In {Case}~\ref{caseB}, $\mu_{\textnormal{JS}}(k)\leq Ck^{-5/2}(\ln k + 1)$. In that case \eqref{eq:tail biggins term3} is smaller than a constant times
\[
p^{{-5/2}}\sum^{\infty}_{k=1} \left(\frac{k}{p}\right)^{\theta-5/2}\left(\frac{k}{p} + 1\right)^{\theta\eta-1} \ln\bigg(\frac{k}{p}\bigg)\\
+ p^{{-5/2}}\ln p \sum^{\infty}_{k=1} \left(\frac{k}{p}\right)^{\theta-5/2} \left(\frac{k}{p} + 1\right)^{\theta\eta-1} \leq Cp^{-3/2}\ln p,
\]

\noindent since the two summations are Riemann sums {again by our choice of $\eta$}. We conclude that
\begin{equation}\label{eq:tail biggins proof eta}
    \bb E\bigg[\frac{1}{T_p-1}\bigg(\sum^{T_p}_{i=1}\left(\frac{X_i+1}{p}\right)^{\theta} \bigg)^{1+\eta}\bigg] \le C \bb E\bigg[ \left(\frac{X_1+1}{p}\right)^{\theta} \left(\frac{p+X_1}{p}\right)^{\theta \eta - 1} \bigg] \leq Cf(p)^{-1}. 
\end{equation}

We now need to replace $T_p-1$ by $1+L_p$. Let $r, r'> 1$ be Hölder conjugates, and take $r$ big enough so that $\eta_r:=\frac{r}{r-1}(1+\eta) -1< \gamma_0 = \min(\frac{\alpha+1-\theta}{\theta},\frac{1}{\alpha+1})$. In particular \eqref{eq:tail biggins proof eta} is valid for $\eta_r$ in place of $\eta$. H\"older's inequality implies that
\begin{multline*}
\bb E\bigg[\frac{1}{1+L_p}\bigg(\sum^{T_p}_{i=1}\left(\frac{X_i+1}{p}\right)^{\theta} \bigg)^{1+\eta}\bigg] 
=
\bb E\bigg[\frac{(T_p-1)^{1/r'}}{1+L_p}\cdot \frac{1}{(T_p -1)^{1/r'}}\bigg(\sum^{T_p}_{i=1}\left(\frac{X_i+1}{p}\right)^{\theta} \bigg)^{1+\eta}\bigg] 
\\
\le 
\mathbb{E}\bigg[ \frac{1}{T_p -1} \bigg(\sum^{T_p}_{i=1}\left(\frac{X_i+1}{p}\right)^{\theta} \bigg)^{(1+\eta)r'} \bigg]^{1/r'} \mathbb{E}\bigg[\frac{(T_p-1)^{r/r'}}{(1+L_p)^r}\bigg]^{1/r}
\stackrel{\eqref{eq:tail biggins proof eta}}{\le}
C f(p)^{\frac{1}{r}-1} \, \mathbb{E}\bigg[\frac{(T_p-1)^{r-1}}{(1+L_p)^r}\bigg]^{1/r}.
\end{multline*}

\noindent In view of \eqref{eq: main tail biggins} and \eqref{eq: L1 convergence Lp}, it remains to show that $f(p)\, \mathbb{E}\Big[\frac{(T_p-1)^{r-1}}{(1+L_p)^r}\Big]$ is bounded in $p\ge 2$. Let $K\ge 2/\mu_{\textnormal{JS}}(0)$. We have
\[
\mathbb{E}\bigg[\frac{(T_p-1)^{r-1}}{(1+L_p)^r}\bigg]
\le
\mathbb{E}\bigg[\frac{1}{1+L_p} \bigg(\frac{T_p}{L_p}\bigg)^{r-1}\bigg]
\le
K^{r-1} \mathbb{E}\bigg[\frac{1}{1+L_p}\bigg] + \mathbb{E}\bigg[  \bigg(\frac{T_p}{L_p}\bigg)^{r-1} \mathds{1}_{\big\{\frac{T_p}{L_p} \ge K\big\}}\bigg].
\]

\noindent It is indeed smaller than $Cf(p)^{-1}$ by \eqref{eq: L1 convergence Lp} and \eqref{eq: exponential tail Tp/Lp}.
\end{proof}

\subsection{Convergence towards the additive martingale}

Let $(h_p:\mathbb{R}_+\to \mathbb{R}_+)_{p\ge 1}$ be a family of non-negative measurable functions. {Recall {from \eqref{eq: def theta_alpha} that} $\theta_{\alpha} = \min(2, 2\alpha-1)$.} We suppose that there exists $c>0$ and $\theta\in(\alpha,\alpha + 1)$ such that 
\begin{equation}\label{eq: bound gp}
h_p(x)x^{\theta_\alpha}\le c (x^{\theta}+x^{\theta_\alpha}),
\end{equation}

\noindent for all $p\ge 1$ and $x\ge 0$. We also suppose that there exists a measurable function $h:\mathbb{R}_+\mapsto \mathbb{R}$ such that for any sequence $(x_p)_{p\ge 1}$ which converges to some $x>0$, $\lim_{p\to\infty} h_p(x_p)=h(x)$. For future use, fix $\delta > 0$ such that $\theta - \delta > \alpha$ and $\theta_{\alpha} - \delta > \alpha$. Then there exists a constant $c>0$ such that for all $p,q>0$, 
\[\frac{1 + \ln p}{1 + \ln q}\leq c\bigg(\frac{p}{q}\bigg)^{\delta} + 1. \]

\begin{lem}\label{l: passing to infinity}
Let $n\in (0,2]$ and $(h_p)_{p\ge 1}$ be as above.  As $p\to\infty$, 
\begin{equation}\label{passing to infinity} 
\sum_{|u| = \ell} h_p \bigg(\frac{\chi^{(p)}(u)}{p} \bigg)\frac{\overline{V}(\chi^{(p)}(u))}{\overline{V}(p)}  \overset{(\mathrm{d})}{\longrightarrow}  \sum_{|u| = \ell} h \big(  Z_\alpha(u) \big) (Z_\alpha(u))^{\theta_\alpha}.
\end{equation}
\end{lem}

{
\begin{rem}
    We will chiefly apply \cref{l: passing to infinity} when $h_p$ does not depend on $p$ (actually even when $h_p=1$). The general form of the statement will be used at the very end of the proof of \cref{thm: main} (\cref{sec: proof main result}). 
\end{rem}
}

\begin{proof} 
By \cref{prop: CCM convergence fixed gen}, in $\ell^{\theta}(\cal U_{\ell})$, 
\[
\frac{1}{p} (\chi^{(p)}(u), |u|=\ell) \xrightarrow[p\to\infty]{(\mathrm{d})} (Z_{\alpha}(u),  |u|=\ell).
\]

\noindent We can suppose by Skorokhod's representation theorem that the convergence is almost sure. Note that it implies the convergence in any $\ell^r(\cal U_{\ell})$ with $r\in[\theta,\infty]$. Let $\varepsilon>0$. Let $U_\eps\subset \cal U_\ell$ be a (random) finite set such that $\sum_{u\in {\cal U}_\ell\setminus U_\eps} (Z_\alpha(u))^{\beta} < \varepsilon$ for $\beta = \theta, \theta - \delta, \theta_{\alpha}$ and $\theta_{\alpha} - \delta$.
{Recall our two {Cases}~\ref{caseA} and~\ref{caseB}.}
By the mean asymptotics in \eqref{eq: mean volume} and \eqref{eq: mean volume 2}, in {Case}~\ref{caseA}, $Cq^{\theta_\alpha}\le \overline{V}(q)\le C'q^{\theta_\alpha}$. Then, by \eqref{eq: bound gp}, for $p$ large enough,
\[
\sum_{u\in {\cal U}_\ell\setminus U_\eps} h_p \bigg(\frac{\chi^{(p)}(u)}{p} \bigg)\frac{\overline{V}(\chi^{(p)}(u))}{\overline{V}(p)}  \le c \sum_{u\in {\cal U}_\ell\setminus U_\eps}\bigg( \frac{\chi^{(p)}(u)}{p}\bigg)^{\theta}+\bigg( \frac{\chi^{(p)}(u)}{p}\bigg)^{\theta_\alpha}\le 2\varepsilon.
\]
\noindent In {Case}~\ref{caseB}, by \eqref{eq: mean volume 2}, $Cq^{\theta_\alpha}(1 + \ln q)^{-1}\le \overline{V}(q)\le C'q^{\theta_\alpha}(1 + \ln q)^{-1}$. Then 
\[\frac{\overline{V}(\chi^{(p)}(u))}{\overline{V}(p)} \leq c\bigg(\frac{\chi^{(p)}(u)}{p}\bigg)^{\theta_{\alpha}}\frac{1 + \ln p}{1 + \ln \chi^{(p)}(u)} \leq c\bigg(\frac{\chi^{(p)}(u)}{p}\bigg)^{\theta_{\alpha}}\left(\bigg(\frac{\chi^{(p)}(u)}{p}\bigg)^{-\delta} + 1\right).\]

\noindent Again by \eqref{eq: bound gp}, for $p$ large enough, 
\[
\sum_{u\in {\cal U}_\ell\setminus U_\eps} h_p \bigg(\frac{\chi^{(p)}(u)}{p} \bigg)\frac{\overline{V}(\chi^{(p)}(u))}{\overline{V}(p)}  \le c \sum_{u\in {\cal U}_\ell\setminus U_\eps}\sum_{\beta = \theta, \theta - \delta, \atop \theta_{\alpha}, \theta_{\alpha} - \delta}\bigg( \frac{\chi^{(p)}(u)}{p}\bigg)^{\beta}\le 4\varepsilon.
\]

\noindent Moreover, almost surely, $ h_p \Big(\frac{\chi^{(p)}(u)}{p} \Big)\to h(Z_\alpha(u))$ by assumption on $h_p$, and  $\frac{\overline{V}(\chi^{(p)}(u))}{\overline{V}(p)}\to (Z_\alpha(u))^{\theta_\alpha}$ by \eqref{eq: mean volume}. We deduce that for $p$ large 
\begin{align*}
&\bigg| \sum_{|u| = \ell} h_p \bigg(\frac{\chi^{(p)}(u)}{p} \bigg)\frac{\overline{V}(\chi^{(p)}(u))}{\overline{V}(p)}- 
\sum_{|u| = \ell}  h \big(  Z_\alpha(u) \big) (Z_\alpha(u))^{\theta_\alpha} \bigg|\\
&\le  \bigg|\sum_{u\in U_\eps} \bigg( h_p \bigg(\frac{\chi^{(p)}(u)}{p} \bigg)\frac{\overline{V}(\chi^{(p)}(u))}{\overline{V}(p)}-h \big(  Z_\alpha(u) \big)(Z_\alpha(u))^{\theta_\alpha} \bigg) \bigg| +8\varepsilon \underset{p\to\infty}{\longrightarrow} 8\varepsilon.
\end{align*}

\noindent Letting $\eps\to 0$  completes the proof. 
\end{proof}

%---------------------------------------------------------------------------------------------------------------%
%		             		  ESTIMATES ON MARKOV CHAIN
%---------------------------------------------------------------------------------------------------------------%
\section{Estimates on the Markov chain $\tt S$}
\label{sec: markov chain estimates}

In this section we introduce a discrete-time Markov chain $(\tt S_n)_{n\geq 0}$ which describes the behaviour of a typical particle in the branching Markov chain $(\chi^{(p)}(u), u\in\cal U)$, and will be the key observable for the derivation of \cref{thm: main}.
{
For an overview of how these estimates combine into the final proof, we suggest to have a look at the proof diagrams of Figures~\ref{fig: proof diagram 1}, \ref{fig: proof diagram 2} and \ref{fig: proof diagram 3} in \cref{sec: proof main result}.
}

%---------------------------------------------------------------------------------------------------------------%
%		             		  MARKOV CHAIN
%---------------------------------------------------------------------------------------------------------------%
\subsection{The Markov chain $\tt S$ and the many-to-one formula}
\label{sec: markov chain}

\medskip
\noindent \textbf{The Markov chain $\tt S$.} The Markov chain $\tt S$ is informally defined by recording the half-perimeter of the nested loops containing a uniform target point in the decorated quadrangulation $(\frak q, \bf\ell)$, starting from the outermost loop. More precisely, under $\tt P_p$ we introduce the Markov chain $(\tt S_n)_{n\geq 0}$ on $\{0, 1, 2, ...\}$ starting at $\tt S_0=p$, and with transition probabilities
\begin{eqnarray*}
\tt{P}_{p}(\tt S_1 = q) &=& \frac{1}{\overline{V}(p)}\mathbb{E}^{(p)}\left[\sum^{\infty}_{i=1} \mathds{1}_{\{\chi^{(p)}(i) = q\}} \overline{V}(q)\right], \quad p, q >0, \\
\tt{P}_{p}(\tt S_1 = 0) &=& 1 - \sum^{\infty}_{q = 1}
\tt{P}_{p}(\tt S_1 = q), \quad p > 0,\\
\tt{P}_{0}(\tt S_1 = 0) &=& 1.
\end{eqnarray*}

This can be rephrased in a more geometric way using the gasket decomposition. Denote by $\bb P_{\bullet}^{(p)}$ the law of a \emph{pointed} loop--$O(n)$ quadrangulation of perimeter $2p$, \added{so that for any event $A$, $\mathbb{P}^{(p)}_{\bullet}(A) = \mathbb{E}^{(p)}[\mathds{1}_A V(\varnothing)]/\overline{V}(p)$} {with $V(\varnothing)$ denoting the volume of the whole map}. Recall from \cref{sec: CCM results} the notation preceding \eqref{eq: gasket decomposition summing outermost loops}. By the Markov property of the gasket decomposition, we immediately have for $p,q \ge 1$,
\begin{align*}
\tt{P}_{p}(\tt S_1 = q) &= \frac{1}{\overline{V}(p)}\mathbb{E}^{(p)}\left[\sum^{\infty}_{i=1} \mathds{1}_{\{\chi^{(p)}(i) = q\}} \overline{V}(q)\right]\\
&= \frac{1}{\overline{V}(p)}\mathbb{E}^{(p)}\left[\sum^{\infty}_{i=1} \mathds{1}_{\{\chi^{(p)}(i) = q\}} V(i)\right]\\
&= \mathbb{E}^{(p)}_{\bullet}\left[\sum^{\infty}_{i=1}\mathds{1}_{\{\chi^{(p)}(i) = q\}} \frac{V(i)}{V(\varnothing)}\right],
\end{align*}
\added{by definition of $\bb P^{(p)}_\bullet$.}

In other words, the Markov chain $\tt S$ can be obtained as follows. First, pick a pointed loop--$O(n)$ quadrangulation $(\frak q_{\bullet}, \bf \ell)$ under $\mathbb{P}^{(p)}_{\bullet}$. Then each time $n\ge 1$ records the half-perimeter of the loop at generation $n$ that contains the distinguished vertex of $(\frak q_{\bullet}, \bf \ell)$; if no such loop exists, that is, the distinguished vertex lies in the gasket of the sub-map, we send the Markov chain to the absorbing cemetery point $0$.

For $M>0$, we denote by $\tt T_M$ the hitting time of $[0,M)$ by the Markov chain $\tt S$:
\begin{equation} \label{eq: tau_M}
	\tt T_M
	:=
	\inf\{n\ge 0, \; \tt S_n < M\}.
\end{equation}

\medskip
\noindent \textbf{A many-to-one formula.}
We have the following many-to-one formula, which is natural in light of the branching property of the gasket decomposition. \added{This key formula relates the behaviour of all cells in the particle system $(\chi^{{(p)}}(u),\; u\in\cal U)$ to that of the distinguished typical particle $\tt S$ (see \cite{kahane1976certaines,lyons1997simple,shi2016branching} for similar results in different contexts).}
\added{Recall that for $u\in \cal U$ and $i\le |u|$, $u_i$ denotes the ancestor of $u$ at generation $i$.}
\begin{prop}\label{prop: many to one}
For all $p\ge 1$, all $n\ge 1$ and all non-negative measurable function $g$ with $g(x_1, x_2, \ldots, x_{n-1}, 0) = 0$, we have
\begin{equation} \label{eq: many-to-one}
\frac{1}{\overline{V}(p)}\mathbb{E}^{(p)}\bigg[\sum_{\abs{u} = n} g(\chi^{(p)}(u_1), \chi^{(p)}(u_2), \ldots, \chi^{(p)}(u_n)) \overline{V}(\chi^{(p)}(u))\bigg] = \tt{E}_p\left[g(\tt S_1, \tt S_2, \ldots, \tt S_n)\right].
\end{equation}
\end{prop}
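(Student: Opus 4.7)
The formula is a standard size-biasing (or many-to-one) identity: up to the normalisation $\overline{V}(p)$, the Markov chain $\tt S$ is precisely the law of the \emph{spine} obtained by picking a vertex proportionally to $V(u)$ along the nested cascade. I would prove the identity by induction on $n$, using the spatial Markov property of the gasket decomposition (\cref{prop: spatial markov gasket}) to propagate along the spine, and the Markov property of $\tt S$ to reassemble the steps.

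The base case $n = 1$ follows directly from the definition of $\tt P_p(\tt S_1 = q)$. Indeed, setting $F(q) := g(q)$ with $F(0) = g(0) = 0$ by assumption, and using $\overline{V}(0) = 0$, one computes
\[
\overline{V}(p)\,\tt E_p[g(\tt S_1)]
= \sum_{q\ge 1} g(q)\,\mathbb{E}^{(p)}\bigg[\sum_{i\ge 1} \mathds{1}_{\{\chi^{(p)}(i)=q\}} \overline{V}(q)\bigg]
= \mathbb{E}^{(p)}\bigg[\sum_{|u|=1} g(\chi^{(p)}(u))\,\overline{V}(\chi^{(p)}(u))\bigg].
\]
This is the $n=1$ version of the identity, and it also yields a reusable auxiliary statement: for any $F:\bb N\to\bb R_+$ with $F(0)=0$,
\[
\overline{V}(p)\,\tt E_p[F(\tt S_1)] = \mathbb{E}^{(p)}\bigg[\sum_{i\ge 1} F(\chi^{(p)}(i))\,\overline{V}(\chi^{(p)}(i))\bigg].
\]

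For the inductive step, I would decompose every $u \in \cal U_n$ as $u = iv$ with $i \in \bb N^*$ and $|v| = n-1$, so that $u_1 = i$ and $u_k = i v_{k-1}$ for $2 \le k \le n$. By \cref{prop: spatial markov gasket} (applied as a stopping-line version at the outermost loops), conditional on $\chi^{(p)}(i) = q \ge 1$, the subcascade $(\chi^{(p)}(iv), v \in \cal U)$ has the same law as $(\chi^{(q)}(v), v\in\cal U)$ under $\bb P^{(q)}$. Introducing $\tilde g_q(x_1,\dots,x_{n-1}) := g(q, x_1, \dots, x_{n-1})$, which still satisfies $\tilde g_q(x_1,\dots,x_{n-2},0)=0$, the induction hypothesis applied at rank $n-1$ gives, for each $i$,
\[
\mathbb{E}^{(p)}\!\!\left[\sum_{|v|=n-1}\! \tilde g_{\chi^{(p)}(i)}(\chi^{(p)}(iv_1),\dots,\chi^{(p)}(iv_{n-1}))\,\overline{V}(\chi^{(p)}(iv))\,\Big|\,\text{outermost loops}\right]
= \overline{V}(\chi^{(p)}(i))\,\tt E_{\chi^{(p)}(i)}\!\bigl[g(\chi^{(p)}(i), \tt S_1, \dots, \tt S_{n-1})\bigr].
\]
Summing over $i$ and taking the outer expectation, the left-hand side of \eqref{eq: many-to-one} (without the $1/\overline{V}(p)$) equals
\[
\mathbb{E}^{(p)}\bigg[\sum_{i\ge 1} \overline{V}(\chi^{(p)}(i))\,\tt E_{\chi^{(p)}(i)}\bigl[g(\chi^{(p)}(i), \tt S_1, \dots, \tt S_{n-1})\bigr]\bigg].
\]

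To finish, I would apply the auxiliary identity from the base case to $F(q) := \tt E_q[g(q, \tt S_1, \dots, \tt S_{n-1})]$, which vanishes at $q = 0$ since $\tt S$ is absorbed at $0$ and $g(0,\dots,0) = 0$ by hypothesis. This yields $\overline{V}(p)\,\tt E_p\bigl[\tt E_{\tt S_1}[g(\tt S_1, \tt S_1', \dots, \tt S_{n-1}')]\bigr]$ for an independent copy $\tt S'$ of $\tt S$, and the Markov property of $\tt S$ identifies this with $\overline{V}(p)\,\tt E_p[g(\tt S_1, \dots, \tt S_n)]$. Dividing by $\overline{V}(p)$ closes the induction. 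There is no real obstacle here: the only subtlety is bookkeeping around the absorbing state $0$ of $\tt S$ and the convention $\overline{V}(0)=0$, which is precisely what the hypothesis $g(x_1,\dots,x_{n-1},0)=0$ is tailored to handle.
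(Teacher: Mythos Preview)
Your proof is correct and follows essentially the same approach as the paper's own proof: induction on $n$, with the base case read off from the definition of the transition probabilities of $\tt S$, and the inductive step obtained by peeling off the first generation via the spatial Markov property of the gasket decomposition, applying the induction hypothesis inside each outermost loop, and then reassembling using the Markov property of $\tt S$. The only cosmetic difference is that you package the $n=1$ case as a reusable auxiliary identity before invoking the Markov property of $\tt S$, whereas the paper writes the two steps in one line.
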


\begin{proof}
We proceed by induction. For $n = 1$, by definition of $\tt S$, we have with the notation of \cref{prop: many to one},
\begin{align*}
\frac{1}{\overline{V}(p)}\mathbb{E}^{(p)}\left[\sum^{\infty}_{i = 1} g(\chi^{(p)}(i)) \overline{V}(\chi^{(p)}(i))\right]
&=\sum^{\infty}_{q=1}\frac{g(q)}{\overline{V}(p)}\mathbb{E}^{(p)}\left[\sum^{\infty}_{i = 1}\mathds{1}_{\{\chi^{(p)}(i)=q\}}\cdot \overline{V}(q)\right] \\
&=\sum^{\infty}_{q=1}g(q)\tt{P}_{p}(\tt S_1 = q)
= \tt{E}_p[g(\tt S_1)\cdot \mathds{1}_{\{\tt S_1\ne 0\}}]
= \tt{E}_p[g(\tt S_1)].
\end{align*}
Now suppose that the claim holds for some $n \ge 1$. We use the Markov property of the gasket decomposition to find that
\begin{multline*}
\frac{1}{\overline{V}(p)}\mathbb{E}^{(p)}\bigg[\sum_{\abs{u} = n+1} g(\chi^{(p)}(u_1), \chi^{(p)}(u_2), \ldots, \chi^{(p)}(u_{n+1})) \overline{V}(\chi^{(p)}(u))\bigg]\\
=\frac{1}{\overline{V}(p)}\mathbb{E}^{(p)}\bigg[\sum^{\infty}_{i=1}\mathbb{E}^{(q)}\bigg[\sum_{\abs{u} = n}g(q, \chi^{(q)}(u_1), \ldots, \chi^{(q)}(u_n)) \overline{V}(\chi^{(q)}(u))\bigg]_{q= \chi^{(p)}(i)}\bigg].
\end{multline*}
Then, we use \eqref{eq: many-to-one} by the induction assumption, along with the Markov property of $\tt S$:
\begin{align*}
\frac{1}{\overline{V}(p)}
\mathbb{E}^{(p)}\bigg[\sum_{\abs{u} = n+1} & g(\chi^{(p)}(u_1), \chi^{(p)}(u_2), \ldots, \chi^{(p)}(u_{n+1})) \overline{V}(\chi^{(p)}(u))\bigg] \\
&=\frac{1}{\overline{V}(p)}\mathbb{E}^{(p)}\bigg[\sum^{\infty}_{i=1}\overline{V}(\chi^{(p)}(i))\tt{E}_{\chi^{(p)}(i)}[g(\chi^{(p)}(i), \tt S_1, \tt S_2, \ldots, \tt S_n)]\bigg]\\
&=\tt{E}_{p}[g(\tt S_1, \tt S_2, \ldots, \tt S_{n+1})].
\end{align*}
\end{proof}

\medskip
\noindent \textbf{Convergence in distribution of $\tt S$.}
Using that {$\phi_{\alpha}(\theta_\alpha)=1$}, we can define a random variable $\xi$ such that for any $g:\mathbb{R}\to \mathbb{R}$ bounded and measurable, 
\begin{equation}\label{def: random variable xi}
\mathbb{E}[g(\xi)] = \mathbb{E}\bigg[\sum_{|u| = 1} g(Z_{\alpha}(u))(Z_{\alpha}(u))^{\theta_{\alpha}}\bigg]. 
\end{equation}

\noindent Observe that $\bb E[\ln \xi]<0$ if $n\in (0,2)$ and $\bb E[\ln \xi]=0$ if $n=2$. Let $(\xi_i,\,i\ge 1)$ be i.i.d. copies of $\xi$, and let $Y$ be the multiplicative random walk defined by $Y_0=1$ and $Y_n=\prod_{i=1}^n \xi_i$ for all $n\ge 1$.   For the limiting multiplicative cascade $(Z_{\alpha}(u), u\in\mathcal{U})$, the many-to-one formula reads: for $g: \mathbb{R}^n\to \mathbb{R}$ non-negative and measurable, 
\begin{equation}\label{eq: many-to-one 2}
\mathbb{E}\bigg[\sum_{|u|=n} g(Z_{\alpha}(u_1), Z_{\alpha}(u_2), \ldots, Z_{\alpha}(u_n)) (Z_{\alpha}(u))^{\theta_{\alpha}}\bigg] = \mathbb{E}[g(Y_1, Y_2, \ldots, Y_n)]. 
\end{equation}

\noindent {The proof is analogous to \cref{prop: many to one}. } Note the similarity between equations \eqref{eq: many-to-one} and \eqref{eq: many-to-one 2}. 

\begin{prop}\label{prop: scaling limits S}
As $p\to \infty$, we have
\begin{equation}\label{eq: scaling limits S}
\mathrm{Law}\bigg(\frac{\tt S_1}{p}\bigg|{\tt S}_0 = p\bigg) \longrightarrow \mathrm{Law}(\xi).
\end{equation}
\noindent More generally, for $n\geq 1$, we have 
\begin{equation}\label{eq: scaling limits S joint}
\mathrm{Law}\bigg(\bigg(\frac{\tt S_1}{p}, \frac{\tt S_2}{p}, \ldots, \frac{\tt S_n}{p}\bigg) \bigg|{\tt S}_0 = p\bigg) \longrightarrow \mathrm{Law}(Y_1, Y_2, \ldots, Y_n). 
\end{equation}
\end{prop}

\begin{proof}
 By the definition of ${\tt S}_1$, for any non-negative measurable function $h_p$ such that $h_p(0)=0$,
\begin{equation}\label{proof many-to-one hp}
{\tt E}_p\bigg[h_p\bigg( \frac{{\tt S}_1}{p} \bigg)\bigg] = \bb E^{(p)}\bigg[\sum_{|u|=1} h_p\bigg( \frac{{{{\chi^{(p)}}(u)}}}{p} \bigg) \frac{\overline{V}(\chi^{(p)}(u))}{\overline{V}(p)}\bigg].
\end{equation}

\noindent We first prove \eqref{eq: scaling limits S}. Let $g:\mathbb{R}_+\mapsto \mathbb{R}_+$ be a bounded non-negative continuous function with $g(0)=0$. By \cref{l: passing to infinity} with $h_p=g$ and {the uniform integrability of $\sum_{|u|=1} \big(\frac{\chi^{(p)}(u)}{p}\big)^{\theta_\alpha}$ provided by} \cref{thm: tail biggins}, the right-hand side of \eqref{proof many-to-one hp} converges to $\bb E[\sum_{|u|=1} g\big(Z_\alpha(u)\big)(Z_{\alpha}(u))^{\theta_\alpha}]$ which is $\bb E[g(\xi)]$ indeed. Suppose that we proved \eqref{eq: scaling limits S joint} for $n$, and let us prove it for $n+1$. Let $g:\mathbb{R}_+^{n+1}\to \mathbb{R}_+$ be a bounded continuous function such that $g(0,\ldots)=0$. Define 
\[
h_p(x):= {\tt E}_{px}\bigg[g\bigg(x,\frac{{\tt S}_1}{p}, \frac{{\tt S}_2}{p}, \dots, \frac{{\tt S}_n}{p}\bigg)\bigg].
\]

\noindent Let $(x_p)_{p\ge 1}$ be some sequence such that $x:=\lim_{p\to\infty} x_p$ exists and is positive. Under ${\tt P}_{px_p}$, 
\[
g\bigg(x_p,\frac{{\tt S}_1}{p}, \frac{{\tt S}_2}{p}, \dots, \frac{{\tt S}_n}{p}\bigg), 
\]

\noindent converges in distribution towards $g(x,xY_1,\ldots,xY_n)$ by the induction hypothesis. The dominated convergence theorem implies that $h_p(x_p)\to h(x):=\bb E[g({x}, xY_1,\ldots,xY_n)]$. By the Markov property, 
\[
{\tt E}_p\bigg[g\bigg(\frac{{\tt S}_1}{p}, \frac{{\tt S}_2}{p}, \dots, \frac{{\tt S}_{n+1}}{p}\bigg)\bigg] = {\tt E}_p\bigg[ h_p\bigg( \frac{{\tt S}_1}{p}\bigg)\bigg],
\]

\noindent hence we can apply \cref{l: passing to infinity} again and use \eqref{proof many-to-one hp} {and \cref{thm: tail biggins}} to finish the induction. 
\end{proof}

\medskip
\noindent \textbf{Further comments in the case $n=2$.}
In the case $n=2$, $\ln Y$ is a centred random walk with finite variance. More explicitly, {as seen from \cref{eq: distribution fn n=2} in the appendix, we have}
\begin{equation} \label{eq: density xi}
F(x):= \bb P(\xi\le x)= \frac{2}{\pi} \arctan \sqrt{x},\quad x\ge 0.
\end{equation}

 \noindent Let {$\sigma_1 := \inf\{k\geq 0: Y_k < 1\}$}. Let $R(x)$ be the renewal function of $\ln Y$: 
\begin{equation}\label{def: renewal function R}
R(x) = \mathbb{E}\bigg[\sum^{\sigma_1-1}_{n = 0} \mathds{1}_{\{Y_{n}\leq \mathrm{e}^x\}}\bigg], \quad x\geq 0. 
\end{equation}

\noindent By the duality relation \cite[Chapter XII]{Feller1971}, $R(x)=\sum_{i\ge 0} \bb P(Y_{\nu_i}\le \mathrm{e}^x)$ where $\nu_0:=0$, $\nu_{i+1}:= \inf\{n>\nu_i\,:\, Y_{n}>Y_{\nu_i}\}$ are the strict ascending ladder epochs of $\ln Y$.  By the renewal theorem \cite[Chapter XI]{Feller1971}, there exists a constant $c_0>0$ such that
\begin{equation}\label{def:c0}
\lim_{x\to\infty} \frac{R(x)}{x}=c_0.
\end{equation}

\noindent The renewal function relates to hitting probabilities as follows. Let 
\begin{align}\label{def:sigma}
\sigma_r &:= \inf\{k\ge 0\,:\, Y_k{<} r\}, \\
\sigma^+_r &:=\inf\{k\ge 0\,:\, Y_k {>} r\}. \label{def:sigma+}
\end{align}
 
 \noindent For fixed $b>1$, as $a\to 0^+$, 
\begin{equation}\label{asymptotics hitting Y}
     {\bb P}(\sigma_a <\sigma^+_b) \sim \frac{R(\ln b)}{c_0{|\ln a|}}.
\end{equation}
 
 \noindent We can justify the asymptotics by observing that  $\Big(R(\ln \frac{b}{Y_n})\mathds{1}_{\{ n<\sigma_b^+\}}\Big)$  is a martingale by \cite[Lemma 1]{tanaka} and applying the optional stopping theorem at time $\sigma_a$, using that undershoots are bounded in $L^1$, see \cite[Lemma 5.1.9]{lawler-limic}.  In general, by \cite[Theorem 5.1.7]{lawler-limic}, there exists a constant $c>1$ such that for all $0<a< q < b$,
\begin{equation}\label{hitting Y}
   \frac{1}{c} \frac{1+\ln(b) - \ln(q)}{1+\ln(b)-\ln(a)} \le  {\bb P}_{q}(\sigma_a <\sigma^+_b) \le c \frac{1+\ln(b) - \ln(q)}{1+\ln(b)-\ln(a)}. 
\end{equation}

%---------------------------------------------------------------------------------------------------------------%
%		             		 POSITIVE MOMENTS
%---------------------------------------------------------------------------------------------------------------%

\subsection{Hitting probabilities in the case $n\in (0,2)$}
\label{sec: hitting n<2}

We suppose in this section that $n\in (0,2)$. 
{
As mentioned in the proof outline in \cref{sec: main result + outline}, the overall strategy to derive our main theorem (\cref{thm: main}) is to provide a tractable classification of the map into bad regions, which are unlikely, and a good region, whose volume is square integrable. Partly because of this square integrability requirement, we will need to place an upper barrier $B$ on the perimeter cascade $(\chi^{(p)}(u), u\in\cal U)$. By the many-to-one formula (\cref{prop: many to one}), related estimates will translate into understanding the hitting probabilities of the Markov chain $\tt S$.
}

{This is the purpose of the present section. Our main objective will be to derive the hitting time estimate in \cref{prop: hitting S infinite}. We start with a preliminary technical result that will be used to derive \cref{prop: hitting S infinite}.} 
For any $M\ge 1$, set $\tt L_M(k):=\sum_{n=0}^k \mathds{1}_{\{\tt S_n \in [1,M]\}}$.
\begin{prop} \label{prop: positive moment estimate S} 
Suppose $n\in (0,2)$. There exist  $\gamma,M,C>0$ and $D\in (0,1)$ such that  for all $p\ge 1$, $k\ge 0$ and $L\ge 1$,
\begin{equation} \label{eq: positive moments bound}
\tt E_p[\tt S_k^{\gamma} \cdot \mathds{1}_{\{\tt L_M(k)\le L \}}]
\le
D^{k}C^L p^{\gamma}.
\end{equation}
\end{prop}

\noindent {The reason for the indicator function in \eqref{eq: positive moments bound} is that we will derive the bound by comparing $\tt S_1/p$ to its limit as $p\to\infty$ (using \cref{prop: scaling limits S}) and using the Markov property. Hence we will need $\tt S$ not to have too many small terms (less than $M$) up to time $k$ to end up with a sensible bound, that we will combine with a crude bound for the small values.}

\begin{proof}

\noindent  For any $\gamma> 0$, by \eqref{eq: many-to-one} and \eqref{eq: mean volume},
\[
{\tt E}_p\bigg[\bigg(\frac{{\tt S}_1}{p}\bigg)^{\gamma}\bigg]
=
\frac{1}{\overline{V}(p)}\mathbb{E}^{(p)}\bigg[\sum_{\abs{u}=1} \bigg(\frac{\chi^{(p)}(u)}{p}\bigg)^{\gamma}\overline{V}(\chi^{(p)}(u)) \bigg]
\le 
C \mathbb{E}^{(p)}\bigg[\sum_{\abs{u}=1} \bigg(\frac{\chi^{(p)}(u)}{p}\bigg)^{\theta_\alpha+\gamma} \bigg].
\]

\noindent Since $(\sum x_i^{\theta_\alpha})^{1+\eta}\ge \sum x_i^{\theta_\alpha(1+\eta)}$, Theorem \ref{thm: tail biggins} implies that $\frac{{\tt S}_1}{p}$ is bounded in $L^\gamma$ for $\gamma \in (0, \theta_{\alpha}\gamma_0)$. {Since $n\in (0, 2)$, we can} take such a $\gamma>0$ small enough so that 
$\phi_\alpha(\theta_\alpha+\gamma)<1$. By \cref{prop: scaling limits S}, $\frac{{\tt S}_1}{p}$ converges in distribution to ${\xi}$. Since $\Big(\frac{{\tt S}_1}{p}\Big)^\gamma$ is uniformly integrable,  its mean converges to  ${\tt E}[{\xi^{\gamma}}]= \phi_\alpha(\theta_\alpha+\gamma)<1$.  Then there exist  two constants $M:=M(\gamma)>0$ and $D:=D(\gamma)\in(0,1)$ such that, for all $p> M$,
\[
\tt E_p[\tt S_1^{\gamma}] \le D p^{\gamma}.
\]
Let $c:=\sup_{p\in [1,M]} \tt E_p[\tt S_1^{\gamma}]p^{-{\gamma}}$ and $C:=\max\Big(\frac{c}{D},1\Big)$. Consider the process $k\mapsto \tt S_k^{\gamma} D^{-k} C^{-\tt L_M(k-1)}$ using the convention that ${\tt L_M(-1)}=0$. It is a non-negative supermartingale. This can be deduced from the Markov property of $\tt S$ since for all $p\ge 0$, 
\[
\tt E_p[\tt S_1^{\gamma} ] \le D C^{\tt L_M(0)}p^{\gamma}.
\]
\noindent  For any $k\ge 0$ and $L\ge 1$,
\[
\tt E_p[\tt S_k^{\gamma} D^{-k} C^{-\tt L_M(k-1)} \cdot \mathds{1}_{\{\tt L_M(k)\le  L \}}]
\le \tt E_p[\tt S_k^{\gamma} D^{-k} C^{-\tt L_M(k-1)}] \le p^{\gamma}.
\]

\noindent Notice that $C^{-\tt L_M(k-1)} \ge C^{-L}$ on the event $\{\tt L_M(k)\le  L \}$ to complete the proof. 
\end{proof}
 
%%%%%%%%%%%%

\noindent For $r\ge 1$, let ${\tt T}_r^+:=\inf\{n\ge 0\,:\, {\tt S}_n >r\}$. 
{The hitting time estimate we are after is the following. Note that since the branching Markov chain $(\chi^{(p)}(u), u\in\cal U)$ is of order $p$ under $\bb P^{(p)}$ (by \cref{thm: CCM convergence cascade}), we will later need to take $B=bp$ so that the limiting multiplicative cascade sees a barrier at height $b$. This will leave a good enough upper bound in \eqref{eq: decay probability} to get rid of the barrier as $b\to\infty$.}
\begin{prop} \label{prop: hitting S infinite} 
Suppose $n\in (0,2)$. There exist $\gamma>0$ and $C>0$ such that for all $B\ge p\ge 1$,
\begin{equation} \label{eq: decay probability}
\tt P_p( {\tt T}_{B}^+<\infty)
\le
C \bigg(\frac{B}{p}\bigg)^{-\gamma}.
\end{equation}
\end{prop}
\begin{proof}
Take $\gamma$, $M$, $C$ and $D$ as in \cref{prop: positive moment estimate S}.  Let $L\ge 1$ (its value will be fixed at the end of the proof). Let $\tt L_M:=\lim_{k\to\infty} \tt L_M(k)$. We use the upper bound
\begin{equation}\label{eq: proof decay probability}
\tt P_p( {\tt T}_{B}^+<\infty)
\le
\tt P_p(\tt L_M > L) + 
B^{-\gamma} \tt E_p\bigg[\sum_{k=0}^\infty  {\tt S}_k^{\gamma} \mathds{1}_{\{ {\tt L}_M\le L\}}\bigg].
\end{equation}

\noindent For the first term, remark that on the event that $\tt L_M>L$, the first $L$ times when $\tt S_n \in[1,M]$, one must have in particular $\tt S_{n+1}\ne 0$ (otherwise $\tt S$ would get absorbed at $0$). Writing $c = \min_{p\in [1,M]} \tt P_p(\tt S_1=0)$, we thus get
\begin{equation}\label{eq: proof bound LM}
\tt P_p(\tt L_M > L)\le (1-c)^L.
\end{equation}

\noindent  On the other hand, \cref{prop: positive moment estimate S}  implies that
\[
\tt E_p\bigg[\sum_{k=0}^\infty  {\tt S}_k^{\gamma} \mathds{1}_{\{ {\tt L}_M\le L\}}\bigg]
\leq
p^{\gamma} C^L \sum_{k=0}^\infty D^k = p^{\gamma} C^L \frac{1}{1-D}  .
\]

\noindent 
\noindent Take $L=\delta \ln \frac{B}{p}$ with $\delta=\delta( \gamma)>0$ small enough and use \eqref{eq: proof bound LM} in \eqref{eq: proof decay probability} to complete the proof.
\end{proof}

\subsection{Hitting probabilities in the case $n=2$}
\label{sec:hitting n=2}

{
For the same reasons as in \cref{sec: hitting n<2}, we will also be after hitting time estimates in the case $n=2$. In fact, introducing an upper barrier in this case will also be crucial for another reason: since this case corresponds to the boundary case of branching random walk (\cref{sec:bdry case derivative}), we will need to apply a truncation argument. We will see that the perimeter cascade $(\chi^{(p)}(u), u\in\cal U)$ \emph{feels} the effect of the barrier, causing the presence of extra logarithmic corrections.
}

{
The main result of this section is the following.
Recall that ${\tt T}_M$ is the hitting time of $[0,M)$ by ${\tt S}$ and ${\tt T}_r^+$ the hitting time of $(r,\infty)$ by ${\tt S}$. 
{In view of the asymptotics \eqref{def:c0} of the renewal function, one should interpret \eqref{asymptotics hitting S} as a glimpse of the fact that the cascade will feel the barrier as $b\to\infty$.}
\begin{prop}\label{prop: hitting time S}
Suppose $n=2$. There exists a constant $c>1$ such that for all $M\ge 1$, $B\ge  M$ and $p\in {[1,B]}$, 
\begin{equation}\label{eq hitting time S upper bound}
     {\tt P}_p({\tt T}_M < {\tt T}^+_{B}) \leq c\frac{1+\ln B - \ln p}{1+ \ln B -\ln M}.
\end{equation}

\noindent Let $b>1$ and $M\ge 1$ and recall the constant $c_0$ defined in \eqref{def:c0}. For any sequence $(x_p)_p$ with limit $x<b$, 
\begin{equation}\label{asymptotics hitting S}
  \lim_{p\to\infty} \ln(p)\,  {\tt P}_{x_p p}({\tt T}_M < {\tt T}^+_{bp}) = \frac{R(\ln(b/x))}{c_0}.
\end{equation}
\end{prop}
}

{The result will be proved at the end of the section.}
{Unsurprisingly, hitting time} estimates in the critical $O(2)$ case are more delicate. 
{Intuitively, the reason is that $\xi$ is centred in this case (recall \eqref{def: random variable xi} and the discussion there), so that $(\chi^{(p)}(u), |u|=k)$ does not \textit{a priori} decay with $k$.}
{To understand more precisely if and how it decays, our key idea will be to} use a coupling with the limiting multiplicative random walk $Y$. {We will first present nice properties of this coupling (namely \cref{lem: E_M} and \cref{coupling 2}), in order to complete the proof of \cref{prop: hitting time S}.}

Recall that $(\xi_i)_{i\ge 1}$ are i.i.d. random variables with distribution given by \eqref{eq: density xi}. Under ${\tt P}_p$, we let  $Y_0=p$ and $Y_n= p \prod_{i=1}^n\xi_i$ for $n\ge 1$. By Lemma \ref{l:convergence speed S1}, there exist  constants $C>0$ and $p_0\ge 1$ such that for all $p\ge p_0$,  and $q\ge 0$, 
\begin{equation}\label{convergence speed S1}
    \abs{{\tt P}_p({\tt S_1} \leq q) - F\Big(\frac{q}{p}\Big)} \leq \frac{C}{\sqrt{p}}.
\end{equation}

\noindent We define our coupling of $\tt S$ and $Y$ under ${\tt P}_p$ by induction. First let ${\tt S}_0 = Y_0 = p$, and denote by $F_r$ the distribution function of ${\tt S}_1/r$ conditioned on ${\tt S}_0 = r$, and $F_r^{-1}$ its generalised inverse. If ${\tt S}_{n-1} = r \ne 0$, we define ${\tt S}_n$ by
\begin{equation}
{\tt S}_n = rF_r^{-1}(F(\xi_n)).
\end{equation}
Then by definition {of ${\tt S}_n$ and monotonicity of $F_r$}, {$F_{r}(({\tt S}_n-1)_+/r) \leq F(\xi_n) \leq F_r({\tt S}_n/r)$}. From the exact formula of $F(x)$ for \eqref{eq: density xi}, there exists a constant $C'>0$ such that for $q\in\mathbb{N}$ and $r\in\mathbb{N}^*$, $|F(q/r) - {F((q-1)_+/r)}| \leq C'/\sqrt{r}$. {Combining this with the bound} \eqref{convergence speed S1} with $p=r$ and $q={\tt S}_n$ or {${\tt S_n}-1$}, we have when $r\ge p_0$, on $\{{\tt S}_{n-1} = r\}$, 
{\[F\bigg(\frac{{\tt S}_n}{r}\bigg) - \frac{C+C'}{\sqrt{r}} \leq  F\bigg(\frac{({\tt S}_n-1)_+}{r}\bigg) - \frac{C}{\sqrt{r}} \leq F(\xi_n) \leq F\bigg(\frac{{\tt S}_n}{r}\bigg) + \frac{C}{\sqrt{r}}. \]
}
\noindent Substituting ${\tt S}_{n-1}$ for $r$, we conclude that there exists a constant $C>0$ such that
 
\begin{equation}\label{main lemma 2}
\left|F\bigg(\frac{Y_n}{Y_{n-1}}\bigg) - F\bigg(\frac{{\tt S}_n}{{\tt S}_{n-1}}\bigg)\right|\leq \frac{C}{\sqrt{{\tt S}_{n-1}}} \quad \text{a.s.},
\end{equation}

\noindent on the event $\{{\tt S}_{n-1} \ge p_0\}$. 

{The following lemma introduces a good event $E_M$ on which $\tt S$ and $Y$ are within constants under the above coupling.}
Recall the definition of $\sigma_r$ in \eqref{def:sigma}. 

\begin{lem} \label{lem: E_M}
    Suppose that $n=2$, and fix $a>1$. For $M>0$, let
    \[
    E_M := \left\{\forall\, 0 \leq k<\sigma_M, \; Y_k^{\frac{7}{8}}\leq Y_{k+1}\leq Y_k^{\frac{9}{8}} \right\} \bigcap \left\{\prod_{k=0}^{\sigma_M-1} (1+  Y_k^{-\frac{3}{16}})\le \sqrt{a} \right\}.
    \]
    There exists $p_1\ge 1$ such that for all $M\ge p_1$, on the event $E_M$, we have $\frac{1}{a}\leq \frac{{\tt S}_k}{Y_k} \leq a$  for all $k\le \sigma_M$.
\end{lem}

\begin{proof}
We first choose $p_1$. {Let $p_0$ and $C>0$ be as in \eqref{convergence speed S1}.} For the sake of the below arguments, we will need to take $p_1 \ge 1$ large enough so that: (i) $p_1\ge a p_0$, (ii) $4\sqrt{a}C p_1^{-7/16} \le 1$, (iii) $8 \sqrt{a}C p_1^{-1/8} \le 1$, and (iv) $\tan\big(\frac{\pi}{2}x) \le 2x$ for all $x<\sqrt{a}Cp_1^{-1/2}$. The constant $p_1$ being set, we now take $M\ge p_1$ and prove the claim by induction on $k$ (up to $\sigma_M$). Suppose that $k<\sigma_M$ and ${\tt S}_{\ell}/Y_{\ell} \in [1/a, a]$ for all $\ell\le k$. For $\ell\le k$, let 
\[
f_\ell:= \abs{F\bigg(\frac{{\tt S}_{\ell+1}}{{\tt S}_\ell}\bigg) - F\bigg(\frac{Y_{\ell+1}}{Y_\ell}\bigg) }.
\]

\noindent Setting $s_\ell=\sqrt{\frac{{\tt S}_{\ell+1}}{{\tt S}_{\ell}}}$ and $y_\ell=\sqrt{\frac{Y_{\ell+1}}{Y_\ell}}$, $f_\ell = f_\ell(s_\ell,y_\ell)$ where
\begin{equation}\label{eq:Fsk}
    f_\ell(s_\ell,y_\ell) =\left|\frac{2}{\pi}\arctan s_\ell - \frac{2}{\pi}\arctan y_\ell\right| = \frac{2}{\pi}\arctan \left|\frac{s_\ell- y_\ell}{1+s_\ell y_\ell} \right|. %\geq \left|\frac{s_k-y_k}{1+s_k y_k} \right|.
\end{equation}

\noindent  Notice that ${\tt S}_\ell  \ge \frac{Y_\ell}{a} \stackrel{\text{(i)}}{\ge} p_0$ since $\ell<\sigma_M$. In view of \eqref{main lemma 2}, $f_\ell \le \frac{C}{\sqrt{{\tt S}_\ell}}\leq \frac{\sqrt{a}C}{\sqrt{Y_\ell}}$. Besides, on the event $E_M$, $y_\ell=\sqrt{\frac{Y_{\ell+1}}{Y_\ell}} \le Y_\ell^{\frac{1}{16}}$ and $\frac{1}{y_\ell} =\sqrt{\frac{Y_{\ell}}{Y_{\ell+1}}} \le Y_\ell^{\frac{1}{16}}$. We deduce that $\tan \big(\frac{\pi}{2} f_\ell\big) \stackrel{\text{(iv)}}{\le} 2f_\ell$, $ f_\ell \widetilde y_\ell \stackrel{\text{(ii)}}{\le} \frac{1}{4}$,  $f_\ell \widetilde y_\ell^2 \stackrel{\text{(iii)}}{\le}  \frac{1}{8}Y_{\ell}^{-1/4}$ and $f_\ell \stackrel{\text{(iii)}}{\le} \frac{1}{8}Y_\ell^{-1/4}$, where $\widetilde y_\ell$ is either $y_\ell$ or $\frac{1}{y_\ell}$. Equation \eqref{eq:Fsk} then yields
\[
|s_\ell-y_\ell|= (1+s_\ell y_\ell) \tan \Big(\frac{\pi}{2} f_\ell\Big)   \le 2 f_\ell (1+s_\ell y_\ell) \le 2 f_\ell (1+ y_\ell^2+ |s_\ell-y_\ell|y_\ell), 
\]
hence
\[
 |s_\ell-y_\ell| \le \frac{2 f_\ell (1+ y_\ell^2)}{1-2f_\ell y_\ell}\le 4f_\ell (1+ y_\ell^2) \le Y_\ell^{-1/4}.
\]

 \noindent We proved that
 \[
      \left|\sqrt{\frac{{\tt S}_{\ell+1}}{{\tt S}_\ell}} - \sqrt{\frac{Y_{\ell+1}}{Y_\ell}} \right| \le Y_\ell^{-1/4}.
 \]

 \noindent Multiplying both sides by $\sqrt{\frac{{\tt S}_\ell}{Y_{\ell+1}}}= \sqrt{\frac{{\tt S}_\ell}{Y_{\ell}}} \sqrt{\frac{Y_\ell}{Y_{\ell+1}}}\le \sqrt{\frac{{\tt S}_\ell}{Y_{\ell}}} Y_{\ell}^{1/16} $, we get
 \[
 \left|\sqrt{\frac{{\tt S}_{\ell+1}}{Y_{\ell+1}}} - \sqrt{\frac{{\tt S}_\ell}{Y_\ell}} \right| \le Y_\ell^{-3/16} \sqrt{\frac{{\tt S}_\ell}{Y_\ell}}.
 \]

\noindent The above chain of arguments is valid for all $\ell\le k$. In particular, by induction,
\[ \sqrt{\frac{{\tt S}_{k+1}}{Y_{k+1}}}\leq \sqrt{\frac{{\tt S}_k}{Y_k}}(1 + Y_k^{-3/16})\leq \ldots \leq \prod^{k}_{n=0}(1 + Y_{n}^{-3/16})\leq \sqrt{a},\]

\noindent on $E_M$. This proves $\tt S_{k+1}/Y_{k+1} \le a$. For the other inequality, we first note that $f_\ell(s_\ell,y_\ell) = f_\ell(1/s_\ell,1/y_\ell)$. Substituting $\frac{1}{s_k}$, \textit{resp.}\ $\frac{1}{y_k}$, for $s_k$, \textit{resp.}\ $y_k$, in \eqref{eq:Fsk} and running the above argument therefore yields
 \[
 \left|\sqrt{\frac{{\tt S}_{\ell}}{{\tt S}_{\ell+1}}} - \sqrt{\frac{Y_{\ell}}{Y_{\ell+1}}} \right| \le Y_\ell^{-1/4}.
 \]

\noindent Multiplying it by $\sqrt{\frac{Y_{\ell+1}}{{\tt S}_{\ell}}}\le \sqrt{\frac{Y_{\ell}}{{\tt S}_\ell}} Y_{\ell}^{1/16}$ leads to the inequality $\sqrt{\frac{Y_{k+1}}{{\tt S}_{k+1}}}\leq \sqrt{a}$. Taking the squares completes the induction and the proof of the theorem.
\end{proof}

{We can now state our main coupling result, showing that $\tt S$ and $Y$ are within constants with high probability.}
\begin{thm}\label{coupling 2}
Suppose that $n=2$, and fix $a>1$.   There {exist} $c_1>0$ and $p_1>0$ such that for all $p \ge M\ge p_1$,
\[
{\tt P}_p\left(\forall\, 0\le k\le \sigma_M,\, \frac{1}{a}\leq \frac{{\tt S}_k}{Y_k} \leq a\right)\ge 1 - c_1 M^{-\frac{1}{16}}.
\]
\end{thm}

\begin{proof}
In the proof, the letter $c$ denotes a positive constant whose value can change from line to line. By \cref{lem: E_M}, we only need to see that there exists $c_1>0$ such that $E_M$ occurs with probability at least $1-c_1 M^{-1/16}$ whenever $p\ge M\ge p_1$. Let $E_1$ be the event that for all $k<\sigma_M$, we have $Y_k^{\frac{7}{8}}\leq Y_{k+1}\leq Y_k^{\frac{9}{8}}$. By a union bound, discussing on the time $k$ when the condition is violated, we find
\[
{\tt P}_p(E_1^\mathrm{c})\leq  {\tt E}_p\bigg[\sum^{\sigma_M - 1}_{k=0} \mathds{1}_{\big\{ \frac{Y_{k+1}}{Y_k} \notin [Y_k^{-1/8}, Y_k^{1/8}] \big\}}\bigg].
\]

\noindent By conditioning on $Y_k$ and using the Markov property,
\[
  {\tt E}_p\bigg[\sum^{\sigma_M - 1}_{k=0} \mathds{1}_{\big\{\frac{Y_{k+1}}{Y_k} \notin [Y_k^{-1/8}, Y_k^{1/8}] \big\}}\bigg]
 =  {\tt E}_p\bigg[\sum^{\sigma_M - 1}_{k=0}{\tt P}(\xi \notin [r^{-1/8},r^{1/8}])_{\big| r=Y_k} \bigg]. 
\]

\noindent From the explicit distribution \eqref{eq: density xi} of $\xi$, we see that ${\tt P}_p( \xi \notin [r^{-1/8},r^{1/8}] )\leq c\,  r^{-1/16}$, hence 
\[
{\tt P}_p(E_1^\mathrm{c})\leq c\, {\tt E}_p \bigg[\sum^{\sigma_M - 1}_{k=0}Y_k^{-\frac{1}{16}}\bigg].
\] 

\noindent The last expectation is smaller than $c\, M ^{-\frac{1}{16}}$, see for example Lemma A.1 in \cite{AHS24}.  On the other hand, 
\[{\tt E}_p\bigg[\sum^{\sigma_M-1}_{k=0} \ln(1 +  Y_k^{-\frac{3}{16}})\bigg] \leq  {\tt E}_p\bigg[\sum^{\sigma_M-1}_{k=0} Y_k^{-\frac{3}{16}}\bigg]
\leq c\,  M^{-\frac{3}{16}},\]

\noindent by the same result. By Markov's inequality, we deduce that for all $p\ge M\ge 1$, the event $E_M$ of \cref{lem: E_M} has probability greater  than $1- c\, M ^{-\frac{1}{16}}$. \cref{lem: E_M} concludes the proof.
\end{proof}

%\bigskip
{Using this coupling, we may finally derive the hitting time estimates in \cref{prop: hitting time S}.}
\begin{proof}[Proof of \cref{prop: hitting time S}] 
We first prove \eqref{eq hitting time S upper bound}. %Fix $a>1$. 
Let $M\ge 1$. We begin by fixing some constants: our precise choice comes from the arguments below. We fix ${\rho}>2$ (large enough) and ${a_0}>0$ (small enough) so that: (i) $1+\ln({\rho})> p_1^{1/16}$ with $p_1$ as in \cref{coupling 2} {with $a=2$ there}, (ii) ${a_0} \le \inf_{x\ge \ln 2} \frac{1-\ln 2 +x}{1+x}$, (iii) $1+\ln(x)<x^{1/16}$ for all $x\ge {\rho}$, and (iv) for all $x\ge {\rho}$,
\[
1+\ln(x)\ge {a_0} (1+\ln(x)) + 16 \ln(1+\ln(x)).
\]
We will first prove \eqref{eq hitting time S upper bound} when $B\ge {\rho}M$. Consider such a $B$, and define $r_0\ge 1$ by
\begin{equation}\label{proof hitting S choice r}
    r_0^{-\frac{1}{16}}= \frac{1}{1+\ln(B/M)},
\end{equation}
\textit{i.e.} \ $\ln(r_0) = 16 \ln(1+\ln(B/M))$.  Since $B\ge {\rho}M$ and by (i) above, we have $r_0\ge p_1$ and we may apply \cref{coupling 2} to $r_0$. Let 
{\[\tau:=\inf\bigg\{k\ge 0\,:\, \frac{{\tt S}_k}{Y_k}\notin \Big[\frac{1}{2},2\Big]\bigg\}.\]} 
Then {there exists $c_1 > 0$ such that} for all $ p \ge r_0$,
\begin{equation}\label{proof hitting S coupling}
    {\tt P}_p(\tau \le \sigma_{r_0}) \le c_1 r_0^{-\frac{1}{16}},
\end{equation}

\noindent hence
\begin{equation}\label{proof hitting S first bound}
{\tt P}_p({\tt T}_M< {\tt T}^+_{B})
 \le 
 c_1 r_0^{-\frac{1}{16}}
+
{\tt P}_p( {\tt T}_M < {\tt T}^+_{B},\, \tau >  \sigma_{r_0})
=
 \frac{c_1}{1+\ln(B/M)}
+
{\tt P}_p( {\tt T}_M < {\tt T}^+_{B},\, \tau >  \sigma_{r_0}). 
\end{equation}

\noindent Now let $r_1:=\max(2M,r_0)$. Notice that $r_1 \le B$ since ${\rho}>2$ and 
\[
r_0^{-\frac{1}{16}}=\frac{1}{1+\ln(B/M)} \ge \frac{1}{1+\ln(B)} \stackrel{\text{(iii)}}{>} B^{-\frac{1}{16}}.
\]
Besides, observe that 
\[
1+\ln(B)-\ln(r_0) \ge  1+\ln(B/M)-\ln(r_0) \overset{\text{(iv)}}{\ge} {a_0} (1+\ln (B/M)),
\]
and also
\[
   1+\ln(B)-\ln(2M) 
\stackrel{\text{(ii)}}{\ge} 
{a_0} (1+\ln (B/M)).
\]
Therefore 
\begin{equation}\label{proof hitting S bound r}
1+\ln(B)-\ln(r_1) \ge {a_0} (1+\ln (B/M)),
\end{equation}

\noindent  We first take an upper bound in \eqref{proof hitting S first bound}: for all $p\in [r_1,B]$,
\begin{equation}
{\tt P}_p({\tt T}_M< {\tt T}^+_{B})
\le 
\frac{c_1}{1+\ln(B/M)} 
+
{\tt P}_p( {\tt T}_M < {\tt T}^+_{B},\, \tau >  \sigma_{r_1}). \label{proof hitting S upper bound}
\end{equation}

\noindent  Since $r_1\ge 2M$, on the event $\{\tau>\sigma_{r_1}\}$, ${\tt T}_M \ge \sigma_{r_1}$ and $\{ \sigma_{r_1} <{\tt T}_B^+\} \subset \{ \sigma_{r_1}< \sigma^+_{2B} \}$. We get for all $p\in [r_1,B]$
\[
{\tt P}_p( {\tt T}_M < {\tt T}^+_{B},\, \tau >  \sigma_{r_1}) 
      \le  {\tt P}_p( \sigma_{r_1} < {\tt T}^+_{B},\, \tau>\sigma_{r_1}) 
     \le  {\tt P}_p( \sigma_{r_1} < \sigma_{2B}^+), 
\]

\noindent which by \eqref{hitting Y} is smaller than a constant times
\[
 \frac{1+\ln(2B)-\ln(p)}{1+\ln(2B)-\ln(r_1)} 
 \le c \frac{1+\ln(B)-\ln(p)}{1+\ln(B)-\ln(r_1)}
     \overset{\eqref{proof hitting S bound r}}{\le} \frac{c}{{a_0}}  \frac{1+\ln(B)-\ln(p)}{1+\ln(B)-\ln(M)}.
\]

 \noindent  Plugging this inequality into \eqref{proof hitting S upper bound}, we proved that for any $M\ge 1$, $B\ge {\rho} M$ and  $p\in [r_1,B]$, 
\[
{\tt P}_p({\tt T}_M< {\tt T}^+_{B})
 \le 
c \frac{1+\ln(B)-\ln(p)}{1+\ln(B)-\ln(M)} .
\]

\noindent This bound stays true (for some bigger constant $c$ if necessary) when $p\le r_1$ by \eqref{proof hitting S bound r}, since in that case
\[
1+\ln(B)-\ln(p) \ge 1+\ln(B)-\ln(r_1) \ge {a_0} (1+\ln(B/M)).
\]
Finally, it also stays true if $B/M \in[1,{\rho}]$ by taking $c>1+\ln({\rho})$. This concludes the proof of \eqref{eq hitting time S upper bound}.

We turn to the proof of \eqref{asymptotics hitting S}. Fix $b>1$ and $M\ge 1$. Let $r_0$ be defined as in \eqref{proof hitting S choice r} with $B$ replaced with $bp$. Notice that $r_0\to \infty$ as $p\to\infty$. Let $K\ge 1$ and $a>1$. Define $\tau_a:=\inf\big\{k\ge 0\,:\, \frac{{\tt S}_k}{Y_k}\notin [\frac{1}{a},a]\big\}$. By \cref{coupling 2}, we have for $p$ large enough,
\[
    {\tt P_{x_p p}}({\tt T}_M< {\tt T}_{bp}^+)  \le c_1 (Kr_0)^{-\frac{1}{16}} + {\tt P_{x_pp}}({\tt T}_M< {\tt T}_{bp}^+,\, \tau_a> \sigma_{Kr_0}).
\]
Now, on the event $\{\tau_a>\sigma_{Kr_0}\}$, we have $\tt T_M > \sigma_{Kr_0}$ (for large enough $p$), and $\{\sigma_{Kr_0} < \tt T^+_{bp} \} \subset \{\sigma_{Kr_0} < \tt \sigma^+_{abp} \}$. Hence
\[
    {\tt P_{x_p p}}({\tt T}_M< {\tt T}_{bp}^+)
    \le 
     c_1 (Kr_0)^{-\frac{1}{16}} + {\tt P_{x_p p}}(\sigma_{Kr_0}< \sigma^{+}_{abp}).
\]

\noindent By \eqref{asymptotics hitting Y} and the continuity of $R$, as $p\to \infty$, ${\tt P}_{x_p p}(\sigma_{Kr_0}< \sigma^{+}_{a b p}) = {\tt P}_1(\sigma_{Kr_0/(x_p p)}< \sigma^{+}_{a b/x_p}) \sim \frac{R(\ln(ab/x))}{c_0\ln(p)}$. Moreover, \eqref{proof hitting S choice r} gives $r_0^{-\frac{1}{16}} \sim \frac{1}{\ln(p)}$ as $p\to\infty$. We deduce that
    \[
    \limsup_{p\to\infty} \; \ln(p) \, {\tt P_{x_p p}}({\tt T}_M< {\tt T}_{bp}^+)
    \le c_1K^{-\frac{1}{16}} + \frac{R(\ln(ab/x))}{c_0}.
    \]

\noindent {Sending} $K\to\infty$ then $a\to 1$ (recalling that $R$ is continuous) yields the upper bound. The lower bound is proved similarly, since for large enough $p$,
\begin{align*}
    {\tt P_{x_p p}}({\tt T}_M< {\tt T}_{bp}^+) & \ge % c_1 (Kr_0)^{-\frac{1}{16}} + 
    {\tt P_{x_p p}}({\tt T}_M< {\tt T}_{bp}^+,\, \tau_a > \sigma_{Kr_0})\\
    &\ge
    {\tt P_{x_p p}}({\sigma_{Ma}} < \sigma^+_{b p/a} ,\, \tau_a > \sigma_{Kr_0}) \\
    &\ge 
      {\tt P_{x_p p}}({\sigma_{Ma}} < \sigma^+_{b p/a}) -c_1 (Kr_0)^{-\frac{1}{16}},
\end{align*}
by \cref{coupling 2}. We conclude as for the upper bound.
\end{proof}

\subsection{Green function}
{Part of our arguments for controlling the volume of \emph{good} or \emph{bad} regions in our classification of \cref{sec: classification} will rely on moment estimates on the volume inside various types of loops. Through the many-to-one formula (\cref{prop: many to one}), this will translate into moment estimates along the spine, confined between two barriers (\cref{c:sum moments}). Again, we stress that the constants $M$ and $B$ in the statements should be thought of as lower and upper barriers for the discrete perimeter cascade. The need for these barriers will become clearer as we move towards the classification, see \textit{e.g.}\ the discussion preceding \cref{def: bad vertices}.} 

{The moment estimates will be presented in \cref{c:sum moments}. They are based on the following Green function estimates.} 

\begin{prop}\label{p:green}
(i) Suppose $n\in(0,2)$. There exists a constant $c>0$ such that for all $t\ge 0$ and $p\ge 1$, 
\begin{equation} \label{eq:green_O_n}
{\tt E}_p\bigg[\sum_{n=0}^\infty \mathds{1}_{\{ {\tt S}_n\in [\mathrm{e}^t,\mathrm{e}^{t+1}]\}}\bigg]\le c.
\end{equation}
(ii) Suppose $n=2$.  There exists a constant $c>0$ such that for all $M\ge 1$, $B\ge M$, $p\in [M,B]$ and $t\in [\ln(M),\ln(B)]$,  
\begin{equation} \label{eq:green_O_2}
{\tt E}_p\bigg[\sum_{n=0}^{{\tt T}_M\land {\tt T}_B^+} \mathds{1}_{\{ {\tt S}_n\in [\mathrm{e}^t,\mathrm{e}^{t+1}]\}}\bigg]\le c (1+\min(\ln(B) - t ,t -\ln(M))) \min\bigg(\frac{2+\ln(B)-\ln(p)}{1+\ln(B)- t}, 1\bigg).
\end{equation}
\end{prop}

\begin{proof}
(i) 
We first prove that there exists a constant $c\in(0,1)$ such that for all $x\ge 1$, ${\tt P}_x({\tt S}_k <x/\mathrm{e},\, \forall\, k\ge 1) \ge c$. 
We only need to show it for $x$ large enough. By Proposition \ref{prop: hitting S infinite}, there exists $b>1$ such that for all $p\ge 1$, ${\tt P}_p({\tt T}_{bp}^+=\infty) \ge \frac{1}{2}$. By the Markov property at time $1$, for $x\ge b \mathrm{e}$, 
\[
{\tt P}_x({\tt S}_k <x/\mathrm{e},\, \forall\, k\ge 1) 
\ge 
\tt E_x \big[\mathds{1}_{\{1\le \tt S_1 \le \frac{x}{b \mathrm{e}}\}} {\tt P}_{\tt S_1}({\tt S}_k <x/\mathrm{e},\, \forall\, k\ge 1)\big]
=
\tt E_x \big[\mathds{1}_{\{1\le \tt S_1 \le \frac{x}{b\mathrm{e}}\}} {\tt P}_{\tt S_1}(\tt T^+_{x/\mathrm{e}}=\infty)\big].
\]
Now on the event $\{1\le \tt S_1 \le \frac{x}{b\mathrm{e}}\}$, we have ${\tt P}_{\tt S_1}(\tt T^+_{x/\mathrm{e}}=\infty) \ge {\tt P}_{\tt S_1}(\tt T^+_{b \tt S_1}=\infty)\ge \frac12$ by our choice of $b$.
Thus ${\tt P}_x({\tt S}_k <x/\mathrm{e},\, \forall\, k\ge 1) \ge \frac{1}{2}{\tt P}_x({\tt S}_1\in [1,x/b\mathrm{e}])$ while ${\tt P}_x({\tt S}_1\in [1,x/b\mathrm{e}])$ has a positive limit as $x\to\infty$ since $\frac{\tt S_1}{x}$ converges in distribution under $\tt P_x$ by \cref{prop: scaling limits S}.

Let $N_t := \sum_{n=0}^\infty \mathds{1}_{\{ {\tt S}_n\in [\mathrm{e}^t,\mathrm{e}^{t+1}]\}}$, and $T_t^{k}$ defined recursively by $T_t^0 := \inf\{n\geq 0 : \; {\tt S}_n \in [\mathrm{e}^t, \mathrm{e}^{t+1}]\}$ and $T_t^{k+1}:=\inf\{n>T_t^k : \; {\tt S}_n \in [\mathrm{e}^t,\mathrm{e}^{t+1}] \}$. By the Markov property, the above discussion entails that for $k\ge 1$,
\begin{multline*}
{\tt P}_p(N_t > k) 
= {\tt P}_p(T_t^{k} <\infty) 
= {\tt E}_p\big[\mathds{1}_{\{T_t^{k-1} <\infty\}} {\tt P}_{\mathtt{S}(T_t^{k-1})}(T_t^1<\infty) \big] \\
\le (1-c) {\tt P}_p(T_t^{k-1} <\infty)
=(1-c) {\tt P}_p(N_t > k-1) , 
\end{multline*}
since $T_t^{1}=\infty$ on the event $\{{\tt S}_k <x/\mathrm{e},\, \forall\, k\ge 1 \}$, when $\tt S_0 = x\in [\mathrm{e}^t,\mathrm{e}^{t+1}]$. This proves that ${\tt P}_p(N_t > k) \le (1-c)^k$ for all $k\ge 0$, and hence \eqref{eq:green_O_n} since $\tt E_p [N_t] = \sum_{k\ge 0} {\tt P}_p(N_t > k)$.

(ii) We claim that it is enough to prove that there exists a constant $c>0$ such that for $B\ge M\ge 1$ and $t\in[\ln M,\ln B]$, 
\begin{equation}\label{eq: proof green function}
\sup_{p\in [\mathrm{e}^t,\mathrm{e}^{t+1}]}{\tt E}_p\bigg[\sum_{n=0}^{{\tt T_M}\land {\tt T}_B^+} \mathds{1}_{\{ {\tt S}_n\in  [\mathrm{e}^t,\mathrm{e}^{t+1}]   \}}\bigg]\le c (1+\min(\ln(B) - t ,t -\ln(M))).
\end{equation}

\noindent Indeed, by the Markov property at $T^0_t:= \inf\{n\geq 0 : \; {\tt S}_n \in [\mathrm{e}^t, \mathrm{e}^{t+1}]\}$ we would then have
\begin{multline*}
{\tt E}_p\bigg[\sum_{n=0}^{{\tt T_M}\land {\tt T}_B^+} \mathds{1}_{\{ {\tt S}_n\in [\mathrm{e}^t,\mathrm{e}^{t+1}]\}}\bigg] = {\tt E_p}\Bigg[\mathds{1}_{\{T^0_t \leq {\tt T}_M\land {\tt T}_B^+\}} {\tt E}_{\tt S(T_t^0)} \bigg[\sum_{n=0}^{{\tt T}_M\land {\tt T}_B^+} \mathds{1}_{\{ {\tt S}_n\in [\mathrm{e}^t,\mathrm{e}^{t+1}]\}}\bigg]\Bigg]\\
\stackrel{\eqref{eq: proof green function}}{\leq} c(1+ \min(\ln(B) - t ,t -\ln(M))){\tt P}_p(T^0_t \leq {\tt T}_M\land {\tt T}_B^+).
\end{multline*} 

\noindent When $\mathrm{e}^{t+1} \leq p \leq B$, \cref{prop: hitting time S} gives that 
\[{\tt P}_p(T^0_t \leq {\tt T}_M\land {\tt T}_B^+) \leq {\tt P}_p({\tt T}_{\mathrm{e}^{t+1}}\leq {\tt T}_B^+)\leq c\frac{1 + \ln B - \ln p}{\ln B - t}\leq c\frac{2 + \ln B - \ln p}{1 + \ln B - t}. \]

\noindent When $p\leq \mathrm{e}^{t+1}$ (in particular when $\mathrm{e}^{t+1} > B$), we note that the above ratio is larger than $1$, and so we can bound by the minimum with $1$ in any case. Thus we have proved \eqref{eq:green_O_2} modulo \eqref{eq: proof green function}. 

We turn to proving \eqref{eq: proof green function}. Fix $B,M$ and $t$ accordingly. Let $e(t)$ denote the left-hand side of \eqref{eq: proof green function} and $\tau:=\inf\{k\ge 0\,:\, \frac{{\tt S}_k}{Y_k}\notin [\frac{1}{2}, 2]\}$. By the Markov property at the first return time to $ [\mathrm{e}^t,\mathrm{e}^{t+1}]$ after $\tau$, 
\begin{equation}\label{eq: proof green function tau1}
{\tt E}_p\bigg[\sum_{n=0}^{{\tt T_M}\land {\tt T}_B^+} \mathds{1}_{\{ {\tt S}_n\in  [\mathrm{e}^t,\mathrm{e}^{t+1}]   \}}\bigg]
\le 
{\tt E}_p\bigg[\sum_{n=0}^{{\tt T_M}\land {\tt T}_B^+\land \tau} \mathds{1}_{\{ {\tt S}_n\in  [\mathrm{e}^t,\mathrm{e}^{t+1}]   \}}\bigg]
+ {\tt P}_p(\tau< {\tt T_M}\land {\tt T}_B^+) e(t).
\end{equation}

\noindent We recall from \cref{coupling 2} that for $M\ge p_1$, 
\begin{equation} \label{eq: e(t) tau_1}
    {\tt P}_p(\tau< {\tt T_M}\land {\tt T}_B^+) \le {\tt P}_p(\tau< {\tt T_M})\le c M^{-\frac{1}{16}}.
\end{equation}
On the other hand, 
\[
{\tt E}_p\bigg[\sum_{n=0}^{{\tt T_M}\land {\tt T}_B^+\land \tau} \mathds{1}_{\{ {\tt S}_n\in  [\mathrm{e}^t,\mathrm{e}^{t+1}]   \}}\bigg]
\le 
\bb{E}_p\bigg[\sum_{n=0}^{{\sigma_{M/2}}\land \sigma_{2B}^+} \mathds{1}_{\{ Y_n\in  [\mathrm{e}^t/2, 2\mathrm{e}^{t+1}]   \}}\bigg]. 
%\leq c(1 + \min(\ln B - t, t-\ln M)). 
\]

\noindent To derive an upper bound on this sum, we now mimic the proof of (i), with $Y$ in place of $\tt S$. Let $\widetilde{N}_t$ be the sum on the right-hand side above. Let $\widetilde{T}_t^0 := \inf\{0 \le n < \sigma_{M/2}\wedge \sigma^+_{2B} : \; Y_n \in [\mathrm{e}^t/2, 2\mathrm{e}^{t+1}]\}$ and $\widetilde{T}_t^{k+1}:=\inf\{\widetilde{T}_t^k < n < \sigma_{M/2}\wedge \sigma^+_{2B} : \; Y_n \in [\mathrm{e}^t/2, 2\mathrm{e}^{t+1}] \}$, with the convention that $\inf \emptyset = \infty$. 
Let $\tilde{c} = \inf_{q\in[\mathrm{e}^t/2, 2\mathrm{e}^{t+1}]} \mathbb{P}_q(\widetilde{T}^{1}_t = \infty)>0$. Using again the Markov property at $\widetilde{T}^{k-1}_t$, we have for all $k\ge 1$,
\begin{equation} \label{eq: expectation Ntilde}
\mathbb{P}_p(\widetilde{N}_t > k)
=
\mathbb{E}_p\Big[ \mathds{1}_{\{\widetilde{T}^{k-1}_t < \infty\}} \mathbb{P}_{Y(\widetilde{T}^{k-1}_t)}(\widetilde{T}^{1}_t<\infty)\Big]
\leq (1-\tilde{c})\mathbb{P}_p(\widetilde{N}_t>k-1),    
\end{equation}
hence $\mathbb{E}_p[\widetilde{N}_t]\leq 1/\tilde{c}$.

We now bound $\tilde{c}$. Conditioning on the first step,  for all $q\in[\mathrm{e}^t/2, 2\mathrm{e}^{t+1}]$,

\begin{multline}\label{eq: Ttilde=infty}
    \mathbb{P}_q(\widetilde{T}^1_t = \infty) 
    = \mathbb{E}_q[\mathds{1}_{\{Y_1\notin[\mathrm{e}^t/2, 2\mathrm{e}^{t+1}]\}}\mathbb{P}_{Y_1}(\widetilde{T}^0_t = \infty)]\\
    \geq \mathbb{E}_q\bigg[\mathds{1}_{\{Y_1\ge 2\mathrm{e}^{t+2}\}}\mathbb{P}_{Y_1}(\sigma_{2B}^+ < \sigma_{2\mathrm{e}^{t+1}})\bigg] + \mathbb{E}_q\bigg[\mathds{1}_{\{Y_1\le\mathrm{e}^{t-1}/2\}} \mathbb{P}_{Y_1}(\sigma_{M/2}<\sigma_{\mathrm{e}^t/2}^+)\bigg].
\end{multline}

\noindent We deal with the first term. 
Note that the law of $\ln \xi$ is symmetric. Thus applying \cref{hitting Y} to $\tilde{Y} = 1/Y$ we derive that for $B > y \geq 2\mathrm{e}^{t+{1}}$,
\[
\mathbb{P}_{y}(\sigma_{2B}^+ < \sigma_{2\mathrm{e}^{t+1}})
\stackrel{\eqref{hitting Y}}{\ge}
\frac{1}{c}\frac{1+\ln(y)-\ln(2\mathrm{e}^{t+1})}{\ln(B)-t} 
\ge
\frac{1}{c}\frac{1}{\ln(B)-t}
\ge
\frac{1}{c(1+\ln(B) - t)},
\]
where we used $y\ge 2\mathrm{e}^{t+{1}}$ in the second inequality. This bound still holds for $y\geq B$ since in that case the probability on the left-hand side is $1$.
Thus the first term of \eqref{eq: Ttilde=infty} is bounded from below as
\[
\mathbb{E}_q\bigg[\mathds{1}_{\{Y_1\ge 2\mathrm{e}^{t+2}\}}\mathbb{P}_{Y_1}(\sigma_{2B}^+ < \sigma_{2\mathrm{e}^{t+1}})\bigg]
\ge 
\frac{1}{c}\frac{\mathbb{P}_q(Y_1\ge 2\mathrm{e}^{t+2})}{1+\ln(B)-t}.
\]
Likewise, a direct application of \eqref{hitting Y} shows that the second term of \eqref{eq: Ttilde=infty} is bounded as
\[
\mathbb{E}_q\bigg[\mathds{1}_{\{Y_1\le\mathrm{e}^{t-1}/2\}} \mathbb{P}_{Y_1}(\sigma_{M/2}<\sigma_{\mathrm{e}^t/2}^+)\bigg]
\ge 
\frac{1}{c} \frac{\mathbb{P}_q(Y_1\le \mathrm{e}^{t-1}/2)}{1+t-\ln(M)}.
\]
Further, notice that since $q\in[\mathrm{e}^t/2, 2\mathrm{e}^{t+1}]$, 
\[
\mathbb{P}_q(Y_1\ge 2\mathrm{e}^{t+2})
\ge 
\mathbb{P}(\xi \ge 4e^2)
\quad \text{and} \quad 
\mathbb{P}_q(Y_1\le \mathrm{e}^{t-1}/2)
\ge
\mathbb{P}\Big(\xi \le \frac{1}{4e^2}\Big).
\]
Combining these two bounds, we get from \eqref{eq: Ttilde=infty} to
\[
\mathbb{P}_q(\widetilde{T}^1_t = \infty)
\ge
C \max\bigg( \frac{1}{1+\ln(B)-t}, \frac{1}{1+t-\ln(M)}\bigg),
\]
for some constant $C>0$. This bound being uniform in $q\in[\mathrm{e}^t/2, 2\mathrm{e}^{t+1}]$, we conclude from \eqref{eq: expectation Ntilde} that
\begin{equation} \label{eq: e(t) Ntilde}
    \mathbb{E}_p[\widetilde{N}_t]\leq c(1 + \min(\ln B-t, t-\ln M)).
\end{equation}
\noindent Plugging \eqref{eq: e(t) tau_1} and \eqref{eq: e(t) Ntilde} back into \eqref{eq: proof green function tau1} and taking the supremum on the left hand side, we obtain
\[
e(t) \le c (1+ \min(\ln(B) - t ,t -\ln(M))) + cM^{-\frac{1}{16}} e(t),
\]

\noindent so $e(t) \le c (1+ \min(\ln(B) - t ,t -\ln(M)))$ if $M$ is greater than some $M_0$. We now deal with the case $M\le M_0$. By the Markov property at the return time of $\tt S$ to $[\mathrm{e}^t,\mathrm{e}^{t+1}]$ (strictly) after  ${\tt T}_{M_0}$ (call it ${\tt T}$), 
\[
{\tt E}_p\bigg[\sum_{n=0}^{{\tt T_M}\land {\tt T}_B^+} \mathds{1}_{\{ {\tt S}_n\in  [\mathrm{e}^t,\mathrm{e}^{t+1}]   \}}\bigg]
\le
{\tt E}_p\bigg[\sum_{n=0}^{{\tt T_{M_0}}\land {\tt T}_B^+} \mathds{1}_{\{ {\tt S}_n\in  [\mathrm{e}^t,\mathrm{e}^{t+1}]   \}}\bigg]
+
{\tt P}_p({\tt T}<\infty) e(t).
\]

\noindent  The probability to jump from $k\in [0,M_0]$ to $0$ is greater than a constant $c_0 \in (0,1)$ hence ${\tt P}_p({\tt T}<\infty) \le 1-c_0$. From what we have already proved, we deduce that
\[
e(t) \le c (1+ \min(\ln(B) - t ,t -\ln(M_0))) + (1-c_0)e(t),
\]

\noindent so that $e(t) \le c (1+ \min(\ln(B) - t ,t -\ln(M)))$ for all $M\ge 1$.
\end{proof}

\begin{Cor}\label{c:sum moments}
Let $\beta>0$.

    (i) Suppose $n\in (0,2)$ and let $\gamma \in (0,\beta)$ be as in Proposition \ref{prop: hitting S infinite}. There exists a constant $c>0$ such that for all $B\ge M \ge 1$  and $p\in [1,B]$,
    \[
    {\tt E}_p\bigg[\sum_{n=0}^{\infty} {\tt S}_n^{-\beta} \mathds{1}_{\{{\tt S}_n\ge M\}}\bigg] \le c M^{-\beta}, \quad \text{and} \quad  {\tt E}_p\bigg[\sum_{n=0}^{\infty} {\tt S}_n^{\beta} \mathds{1}_{\{{\tt S}_n\le B\}}\bigg] \le c B^{\beta}\left(\frac{B}{p}\right)^{-\gamma}.
    \]
    
    (ii) Suppose $n=2$. There exists a constant $c>0$ such that for all $B\ge M\ge 1$ and $p\in [M,B]$,
    \[
    {\tt E}_p\bigg[\sum_{n=0}^{{\tt T}_M\land {\tt T}_B^+ -1 } {\tt S}_n^{-\beta}\bigg] \le c M^{-\beta}\frac{1+\ln(B)-\ln(p)}{1+\ln(B)-\ln(M)}, \quad \text{and} \quad  {\tt E}_p\bigg[\sum_{n=0}^{{\tt T}_M\land{\tt T}_B^+-1} {\tt S}_n^{\beta}\bigg] \le c B^{\beta}.
    \]

\end{Cor}

\begin{proof}
Throughout the proof we allow the constant $c>0$ to vary from line to line. 

\smallskip
\noindent (i) Let $t_M:= \lfloor \ln M \rfloor $ and write $\sum_{n=0}^{\infty} {\tt S}_n^{-\beta} \mathds{1}_{\{{\tt S}_n\ge M\}} \le \sum_{t=t_M}^\infty \mathrm{e}^{-\beta t}\sum_{n=0}^\infty \mathds{1}_{\{{\tt S}_n\in [\mathrm{e}^t,\mathrm{e}^{t+1}]\}}$. By Proposition \ref{p:green} (i), ${\tt E}_p\big[\sum_{n=0}^{\infty} {\tt S}_n^{-\beta} \mathds{1}_{\{{\tt S}_n\ge M\}}\big] \le c \sum_{t=t_M}^\infty \mathrm{e}^{-\beta t} \le c M^{-\beta}$. Similarly, let $t_B:= \lfloor \ln B \rfloor $. We have  $\sum_{n=0}^\infty {\tt S}_n^{\beta}\mathds{1}_{\{{\tt S}_n\le B\}} \le  \sum_{t=0}^{t_B} \mathrm{e}^{\beta (t+1)} \sum_{n=0}^\infty\mathds{1}_{\{{\tt S}_n\in [\mathrm{e}^t,\mathrm{e}^{t+1}]\}}$. Notice that by the Markov property at time ${\tt T}_{\mathrm{e}^t}^+$ and Proposition \ref{p:green} (i),
    ${\tt E}_p\big[\sum_{n=0}^\infty \mathds{1}_{\{ {\tt S}_n\in [\mathrm{e}^t,\mathrm{e}^{t+1}]\}}\big]\le c {\tt P}_p({\tt T}_{\mathrm{e}^t}^+<\infty)$ for $\mathrm{e}^t>p$. Proposition \ref{prop: hitting S infinite} implies that ${\tt E}_p\big[\sum_{n=0}^\infty \mathds{1}_{\{ {\tt S}_n\in [\mathrm{e}^t,\mathrm{e}^{t+1}]\}}\big] \le c \big(\frac{\mathrm{e}^t}{p}\big)^{-\gamma} $ whenever $\mathrm{e}^t>p$. This {bound} still holds when $\mathrm{e}^t \le p$, by Proposition \ref{p:green} (i). We get
    \[
{\tt E}_p\bigg[\sum_{n=0}^\infty {\tt S}_n^{\beta}\mathds{1}_{\{{\tt S}_n\le B\}}\bigg] \le c  \sum_{t=0}^{t_B} \mathrm{e}^{\beta t}\bigg(\frac{\mathrm{e}^t}{p}\bigg)^{-\gamma} \le c B^{\beta} \left(\frac{B}{p}\right)^{-\gamma}.
    \]

    \smallskip
    \noindent (ii) Keeping the notation $t_M$ and $t_B$ from (i) and setting $t_p = \lfloor\ln p\rfloor$, we have 
    \[\sum_{n=0}^{{\tt T}_M\land {\tt T}_B^+ -1 } {\tt S}_n^{-\beta} \le \sum_{t=t_M}^{t_B} \mathrm{e}^{-\beta t}\sum_{n=0}^{\tt T_M \wedge \tt T_B^+} \mathds{1}_{\{{\tt S}_n\in [\mathrm{e}^t,\mathrm{e}^{t+1}]\}}.\]
    By Proposition \ref{p:green} (ii), we obtain
    \begin{align*}
    {\tt E}_p\bigg[\sum_{n=0}^{{\tt T}_M\land {\tt T}_B^+ -1 } {\tt S}_n^{-\beta}\bigg]
    &\le 
    c \sum_{t=t_M}^{t_B} \mathrm{e}^{-\beta t}  (1+ \min(\ln(B) - t ,t -\ln(M))) \frac{2+\ln(B)-\ln(p)}{1+\ln(B)- t}\\
    &\le 
    c \sum_{t=t_M}^{t_B} \mathrm{e}^{-\beta t}(1+t-\ln (M))\frac{2+\ln(B)-\ln(p)}{1+\ln(B)-t} \\
    &
    \stackrel{t=s+t_M}{\le} c M^{-\beta} \frac{2+\ln(B)-\ln(p)}{1+\ln(B)- \ln(M)} \sum_{s=0}^{t_B-t_M} (1+s)\mathrm{e}^{-\beta s} \frac{1+\ln(B)-\ln(M)}{1+\ln(B)-\ln(M)-s}\\
    &
    \le c M^{-\beta} \frac{2+\ln(B)-\ln(p)}{1+\ln(B)- \ln(M)} \sum_{s=0}^{t_B-t_M} \mathrm{e}^{-\beta s} (1+s)^2
    \\
    &\le 
    c M^{-\beta}\frac{2+\ln(B)-\ln(p)}{1+\ln(B)-\ln(M)}.
    \end{align*}

    Finally we have $\sum_{n=0}^{{\tt T}_M\land{\tt T}_B^+-1} {\tt S}_n^{\beta} \le \sum_{t=t_M}^{t_B} \mathrm{e}^{\beta (t+1)}\sum_{n=0}^{\tt T_M \wedge \tt T_B^+} \mathds{1}_{\{{\tt S}_n\in [\mathrm{e}^t,\mathrm{e}^{t+1}]\}}$ hence by Proposition \ref{p:green} (ii), 
    \begin{multline*}
        {\tt E}_p\bigg[\sum_{n=0}^{{\tt T}_M\land{\tt T}_B^+-1} {\tt S}_n^{\beta}\bigg] \le 
        c \sum_{t=t_M}^{t_B} \mathrm{e}^{\beta t} (1+ \ln(B) - t) \min\bigg(\frac{2+\ln(B)-\ln(p)}{1+\ln(B)- t}, 1\bigg)\\
        \le c \sum_{t=t_M}^{t_p} \mathrm{e}^{\beta t} (2+\ln (B)-\ln (p)) + c\sum_{t=t_p}^{t_B} \mathrm{e}^{\beta t}(1+\ln B-t)
        \leq cp^{\beta}(2+\ln(B/p)) + cB^{\beta}
        \leq 
        c B^{\beta}. 
    \end{multline*}
\end{proof}

%---------------------------------------------------------------------------------------------------------------%
%
%				GOOD/BAD PORTIONS OF MAP
%
%---------------------------------------------------------------------------------------------------------------%
\section{Classification into good or bad regions of the map}
\label{sec: classification}
We translate the estimates obtained in the previous section into volume estimates for various portions of the loop-decorated quadrangulation $(\frak q, \bf\ell)$.  This results in a classification into good or bad regions of the map (\cref{sec: good or bad}), where we rule out the contribution of the bad regions to the volume (\cref{sec: estimates bad regions}). Our \cref{def: bad vertices} below gives in particular the profile of \emph{good} or \emph{bad} vertices, \textit{i.e.} vertices which will {respectively} contribute or not to the volume in the scaling limit as the perimeter goes to infinity. 
{
For an overview of how this classification is used in the final proof, we suggest to have a look at Figures~\ref{fig: proof diagram 1}, \ref{fig: proof diagram 2} and \ref{fig: proof diagram 3} in \cref{sec: proof main result}.
}

For $B\ge 1$, let ${\cal E}(B)$ be the event 
\begin{equation}\label{def:EB}
    {\cal E}(B):=\{\forall\, u\in \mathcal{U}, \, \chi^{(p)}(u)\le B\}.
\end{equation}

\noindent For  $u\in \mathcal{U}$, let $T_{B}^+(u):=\inf\{k\in [0,|u|]\,:\, \chi^{(p)}(u_k)>B\}$ with the convention that $\inf\emptyset=\infty$. 
\begin{prop}\label{prop: EB}
    For all $B\ge 1$ and $p\in [1,B]$, 
    \begin{equation}\label{eq:expectation volume greater then B}
    \bb E^{(p)}\bigg[\sum_{u\in \mathcal U} \overline{V}(\chi^{(p)}(u))\mathds{1}_{\{T_{B}^+(u)=|u|\}} \bigg] = \overline{V}(p){\tt P}_p({\tt T}_{B}^+<\infty).
    \end{equation}
    In particular, there exists a constant $C>0$ such that for all $B\ge 1$ and $p\in [1,B]$, $\mathbb{P}^{(p)}({\cal E}(B)^\mathrm{c})\le C\left(\frac{p}{B}\right)^{\theta_\alpha}\frac{1 + \ln B}{1 + \ln p}$.
\end{prop}
\begin{proof}
By the many-to-one formula \eqref{eq: many-to-one}, the expectation in \eqref{eq:expectation volume greater then B}  is
    \[
    \sum_{n=0}^\infty \bb E^{(p)}\bigg[\sum_{|u|=n} \overline{V}(\chi^{(p)}(u))\mathds{1}_{\{T_{B}^+(u)=n\}} \bigg] = \overline{V}(p){\tt P}_p({\tt T}_{B}^+<\infty),
    \]
    indeed. By \eqref{eq: mean volume} and \eqref{eq: mean volume 2}, it is smaller than $ \overline{V}(p)\le Cp^{\theta_\alpha}$ in {Case}~\ref{caseA} and smaller than $\overline{V}(p)\le Cp^{\theta_\alpha}(1 + \ln p)^{-1}$ in {Case}~\ref{caseB}. On the other hand, on the event ${\cal E}(B)^\mathrm{c}$,  
    \[
    \sum_{u\in \mathcal{U}} \overline{V}(\chi^{(p)}(u))\mathds{1}_{\{|u|=T_{B}^+(u)\}}\ge 
    \begin{cases}
        CB^{\theta_{\alpha}}, & \text{ in {Case}~\ref{caseA},}\\
        CB^{\theta_{\alpha}}(1 + \ln B)^{-1}, & \text{ in {Case}~\ref{caseB}.}
    \end{cases}
    \]

    \noindent We conclude by Markov's inequality.
    \end{proof}

%---------------------------------------------------------------------------------------------------------------%
%		             		 GASKET
%---------------------------------------------------------------------------------------------------------------%
\subsection{Estimates on the gasket of the map}
\label{sec: gasket estimates}
As a first application of the estimates in \cref{sec: markov chain estimates}, we start by ruling out the volume of the gasket $\frak g_M$ corresponding to the portion of the planar map which lies outside all loops of perimeter smaller than $2M$, where $M$ will be a large constant.
More precisely, recall from \cref{sec: gasket decomposition} that, by the gasket decomposition, we may view the loop-decorated quadrangulation $(\frak q, \bf\ell)$ as the combination of a gasket, rings inside the holes, and planar maps with loop configurations inside the rings. For $u\in\mathcal{U}$ such that $\chi^{(p)}(u) \ne 0$, the loop labelled by $u$ contains a loop-decorated planar map. Define $V_g(u)$ to be the volume of the gasket for this map, with the convention that $V_g(u)=0$ if $\chi^{(p)}(u) = 0$. Notice that by definition of ${\tt S}_1$, for any $p\ge 1$,
\begin{equation}\label{eq P(S_1=0)}
{\tt P}_p({\tt S}_1=0)=1-\frac{1}{\overline{V}(p)}\bb E^{(p)}\bigg[\sum_{i=1}^\infty \overline{V}(\chi^{(p)}(i))\bigg] = \frac{\bb E^{(p)}[V_g(\varnothing)]}{\overline{V}(p)}.
\end{equation}

Recall from \cref{sec: CCM results} the notation $u_k$ for the ancestor of $u$ at generation $k\le |u|$. For $u\in \mathcal{U}$, let $T_M(u):=\inf\{k\in [0,\abs{u}]\,:\, \chi^{(p)}(u_{k})<M\}$ with the convention that $\inf\emptyset=\infty$. Then the volume of $\frak g_{M}$ is 
\begin{equation}
\label{eq: volume g_M def}
|\frak g_M| = \sum^{\infty}_{n=0} \sum_{\abs{u} = n} V_g(u) \mathds{1}_{\{\chi^{(p)}(u_0), \chi^{(p)}(u_1), \ldots, \chi^{(p)}(u_n)\geq M\}} =
\sum^{\infty}_{n=0} \sum_{\abs{u} = n} V_g(u) \mathds{1}_{\{ T_M(u)>|u|\}
}.
\end{equation}

\noindent For $B\ge M$, we let $\frak g_{B,M}$ be the gasket outside loops of half-perimeter smaller than {$M$} or greater than $B$. Recall that $T_B^+(u):=\inf\{k\in [0,\abs{u}]\,:\, \chi^{(p)}(u_{k})> B\}$. The volume of $\frak g_{B,M}$ is 
\begin{equation}
\label{eq: volume g_BM def}
|\frak g_{B,M}| = 
\sum^{\infty}_{n=0} \sum_{\abs{u} = n} V_g(u) \mathds{1}_{\{ T_M(u)>|u|,\, T_{B}^+(u)>|u|\}
}.
\end{equation}

\noindent In the case $n=2$, we define  for $B\ge M\ge 1$ and $p\in {[1,B]}$ 
\begin{equation}\label{def:overline small v}
    \overline{v}_{B,M}(p):= \overline{V}(p)\frac{1+\ln(B)-\ln(p)}{1+\ln(B) -\ln(M)}.
\end{equation}

Our main result in this subsection is the following estimate on the volume of $\frak g_M$ and $\frak g_{B,M}$. We let $\beta_\alpha = \theta_\alpha-\alpha$. 
{Note that $\beta_\alpha$ is the difference of the two exponents corresponding to the mean volumes of the map and its gasket respectively, see \eqref{eq: mean volume} and \eqref{eq: mean volume boltzmann}. As already hinted at in the discussion following \cref{thm: CCM convergence martingales}, the fact that $\beta_\alpha$ is positive should allow to neglect the gasket relative to the whole map. More precisely, the exponent $\beta_\alpha$ should measure how the two volumes compare, which is in essence the content of the following result.}

\begin{thm}\label{thm: gasket estimate}
(i) Suppose $n\in (0,2]$. There exists a constant $C>0$ such that for all $p\ge M\ge 1$
\[
\mathbb{E}^{(p)}[\abs{\frak g_{M}}] \leq C M^{-\beta_\alpha}\overline{V}(p).
\]

(ii) Suppose $n=2$. There exists a constant $C>0$ such that for all $B\ge M\ge 1$ and $p\in [M,B]$,
\[\mathbb{E}^{(p)}[\abs{\frak g_{B,M}}] \leq 
    C M^{-\beta_\alpha}\overline{v}_{B,M}(p).
 \]

\end{thm}
\begin{proof}
For $q\ge 0$, we denote by $\overline{V_g}(q)=\bb E^{(q)}[V_g(\varnothing)]$ the \emph{expected} volume of the gasket of a loop-decorated quadrangulation with half-perimeter $q$ -- with the convention that $\overline{V_g}(q) = 0$ for $q=0$.  
By equation \eqref{eq: volume g_BM def} and the Markov property of the gasket decomposition, we have
\begin{align*}
\mathbb{E}^{(p)}[\abs{\frak g_{B,M}}] &= \sum^{\infty}_{n=0}\mathbb{E}^{(p)}\bigg[\sum_{\abs{u} = n} V_g(u)\mathds{1}_{\{T_M(u)>|u|,\,T_{B}^+(u)>|u|\}}\bigg]\\
&=\sum^{\infty}_{n=0}\mathbb{E}^{(p)}\bigg[\sum_{\abs{u} = n}\overline{V_g}(\chi^{(p)}(u)) \mathds{1}_{\{T_M(u)>|u|,\,T_{B}^+(u)>|u|\}}\bigg].
\end{align*}
The many-to-one formula (\cref{prop: many to one}) reduces the above expression to
\begin{equation}
\label{eq: volume gasket many-to-one}
\mathbb{E}^{(p)}[\abs{\frak g_{B,M}}]
=\sum^{\infty}_{n=0}\overline{V}(p)\tt{E}_{p}\bigg[\mathds{1}_{\{{\tt T}_M>n,\, {\tt T}_{B}^+>n\}}\frac{\overline{V_g}(\tt S_n)}{\overline{V}(\tt S_n)}\bigg]
=\overline{V}(p)\tt{E}_{p}\bigg[\sum^{{\tt T}_M \wedge {\tt T}_{B}^+ -1}_{n=0}\frac{\overline{V_g}(\tt S_n)}{\overline{V}(\tt S_n)}\bigg],
\end{equation}
where recall from \eqref{eq: tau_M} that $\tt T_M = \inf\{n\ge 0, \; \tt S_n < M\}$ denotes the hitting time of $[0, M)$.
Making use of the asymptotic behaviour of $\overline{V_g}$ and $\overline{V}$ \eqref{eq: mean volume boltzmann} and \eqref{eq: mean volume} when $n\in (0, 2)$ and in \eqref{eq: mean volume boltzmann n=2} and \eqref{eq: mean volume 2} when $n=2$, we can pick a constant $C>0$ such that, for all $q\ge 1$,
\begin{equation} \label{eq: volume ratio}
\frac{\overline{V_g}(q)}{\overline{V}(q)}
\le
C q^{-\beta_{\alpha}},
\end{equation}
with $\beta_{\alpha} = \theta_{{\alpha}} -\alpha>0$. Therefore we conclude from \eqref{eq: volume gasket many-to-one} and \eqref{eq: volume ratio} that
\[
\mathbb{E}^{(p)}[\abs{\frak g_{B,M}}]
\le
C \overline{V}(p) \tt{E}_{p}\bigg[\sum^{{\tt T}_M \wedge {\tt T}_{B}^+-1}_{n=0}\tt S_n^{-\beta_{\alpha}}\bigg].
\]

\noindent An application of \cref{c:sum moments}  (ii) 
for $\beta=\beta_{\alpha}$ yields the volume estimate (ii). {Sending} $B\to\infty$ gives (i) in the case $n=2$. In the case $n\in (0,2)$, we write $\mathbb{E}^{(p)}[\abs{\frak g_{M}}]
\le
C \overline{V}(p) \tt{E}_{p}\Big[\sum^{{\tt T}_M-1}_{n=0}\tt S_n^{-\beta_{\alpha}}\Big]$ (using monotone convergence, say) and apply \cref{c:sum moments} (i).
\end{proof}

For future reference, note that \eqref{eq P(S_1=0)} and \eqref{eq: volume gasket many-to-one} imply for any $p\ge 1$,
\begin{equation}\label{eq: expectation gasket P(S_1=0)}
\bb E^{(p)}[|{\frak g}_{B,M}|] = \overline{V}(p) {\tt P}_p({\tt T}_M<{\tt T}_B^+, {\tt S}_{{\tt T}_M}=0).
\end{equation}

%---------------------------------------------------------------------------------------------------------------%
%		             		 GOOD OR BAD
%---------------------------------------------------------------------------------------------------------------%
\subsection{Good or bad regions of the map}
\label{sec: good or bad}

\cref{thm: gasket estimate} implies that, by choice of $M$, $\frak g_{M}$ can be made as small as we want with respect to the bulk of the map. Here we give a classification of the bulk into regions of $(\frak q, \bf\ell)$ which are \emph{good} or \emph{bad}. Bad regions will be ruled out using first moment methods in the next section (\cref{sec: estimates bad regions}), whereas we shall need second moment estimates on the good region (\cref{sec: estimates good region}).

To start with, note that the complement of $\frak g_M$ consists of vertices which are eventually in some loop of \added{perimeter} smaller than $2M$. This is the union of the various (loop-decorated) maps corresponding to loops labelled by the nodes $u\in \cal U$ such that $\chi^{(p)}(u_k) \geq M$, for all $0\leq k\leq \abs{u}-1$, but $0<\chi^{(p)}(u)<M$. This motivates the following definition.

\begin{defn} \label{def: branch points}
A node $u\in\mathcal{U}$ is called a \textbf{branch point} if 
\[\forall 0\leq k\leq \abs{u}-1, \; \chi^{(p)}(u_k) \in [M,B], \quad \text{and} \quad 0<\chi^{(p)}(u)<M.\]
The set of branch points is denoted by $\mathcal{N}$.
\end{defn}

\noindent %elie 
We can view ${\mathcal N}$ as a stopping line for the discrete perimeter cascade $(\chi^{(p)}(u), u\in \cal U)$. While exploring the loops of the map by generation, one freezes a loop when it has perimeter smaller than $2M$ and does not explore the submap inside. A branch point in the above definition corresponds to a frozen loop in the exploration. It is easy to see that the spatial Markov property still holds in this case, \textit{i.e.} conditionally on $\frak g_{M}$, all these maps are independent and distributed according to $\mathbb{P}^{(\chi^{(p)}(u))}$. 

Given our estimates on the gasket (\cref{thm: gasket estimate}), we now turn our attention to the volume of the branch points:
\begin{equation} \label{eq: volume branch points}
V_{\cal N} := \sum_{u\in \mathcal{N}} V(u).
\end{equation}

The expectation of $V_{\cal N}$ can be calculated as a straightforward application of the many-to-one formula (\cref{prop: many to one}). Indeed 
\begin{align*}
\mathbb{E}^{(p)}[V_{\cal N}]
&= \sum^{\infty}_{n=0} \mathbb{E}^{(p)}\bigg[\sum_{\abs{u}=n} V(u)\mathds{1}_{\{\chi^{(p)}(u_0) \in [M,B], \ldots,\chi^{(p)}(u_{n-1}) \in [M,B], \; 0<\chi^{(p)}(u_n)<M\}} \bigg]\\
&= \sum^{\infty}_{n=0} \overline{V}(p)\tt{P}_p\big(\tt S_0 \in [M,B], \ldots, \tt S_{n-1} \in [M,B], \; 0<\tt S_n < M\big),
\end{align*}
\textit{i.e.} \ 
\begin{equation} \label{eq: expected volume branch points}
\mathbb{E}^{(p)}[V_{\cal N}] = \overline{V}(p)\tt P_p(\tt T_M<{\tt T}_B^+, \; \tt S_{\tt T_M}\ne 0),
\end{equation}
where $\tt T_M$ is defined by \eqref{eq: tau_M}.

\bigskip

% \medskip
\noindent \textbf{Classification of branch points.}
We are now ready to present our classification of the branch points in $\cal N$. {It is important to notice that the classification depends on a quadruplet $(B,M,A,L)$ of positive numbers, with $B>M$. These numbers will appear in all our later estimates (\cref{sec: estimates bad regions} and \cref{sec: estimates good region}) and will be tuned in the final proof in \cref{sec: proof main result}. We think of $M$ and $B$ as lower and upper \emph{barriers} for the branching Markov chain $(\chi^{(p)}(u),u\in\mathcal{U})$ under $\mathbb{P}^{(p)}$. Since the latter cascade is of order $p$ (recall \cref{thm: CCM convergence cascade}), they will both be sent to infinity in some precise way. In particular, we will send $M\to\infty$ so as to ensure that we may rule out the contribution of the gasket (by \cref{thm: gasket estimate}), while we will take $B=bp$ (with $b$ large) so that the limiting multiplicative cascade $(Z_\alpha(u), u\in\mathcal{U})$ sees a barrier of height $b$. As expected from the comparison with the boundary case of branching random walk, we will see that the discrete perimeter cascade \emph{feels} the effect of the upper barrier if, and only if, $n=2$ (this explains the logarithmic correction in the second and third items of \cref{thm: second moment good volume}). See \cref{sec: hitting n<2} and \cref{sec:hitting n=2} for related discussions.}

\medskip

\begin{defn} \label{def: bad vertices}
{Fix $\mu=\frac{\theta_\alpha}{4}$ (any constant $\mu \in (0,\frac{\theta_\alpha}{2})$ would do). Let $(B,M,A,L)$ be positive numbers with $B > M$.}
\begin{itemize}
\item For $u\in \mathcal{U}$, we say that $u$ has \textbf{moderate increments} if \footnote{{Roughly speaking, this technical condition should be thought of as a control on the \emph{size} of the offspring along the branch from the root $\varnothing$ to the node $u$, where the size is measured with respect to the \emph{expected volume carried by the offspring}. Indeed, recalling the asymptotics \eqref{eq: mean volume} and \eqref{eq: mean volume 2}, we see that \eqref{eq: size increments} can be interpreted as some decay condition on the expected volume contained at each generation. In applications, $B$ will act as an upper barrier so that the term $\frac{B}{\chi^{(p)}(u_k)}$ will be typically large.}} 
\begin{equation}\label{eq: size increments}
\sum^{\infty}_{i=1} (\chi^{(p)}(u_ki))^{\theta_{\alpha}} \bigg(1+\ln_+ \frac{\chi^{(p)}(u_k)}{\chi^{(p)}(u_ki)}\bigg)^{2}\leq A\left(\frac{B}{\chi^{(p)}(u_k)}\right)^\mu (\chi^{(p)}(u_k))^{\theta_{\alpha}}, \quad \forall 0\leq k<\abs{u}.
\end{equation}
\item For $u\in \mathcal{U}$, let $V(u)$ be the volume of map inside the loop labelled by $u$. We say that $u$ \textbf{contains a small map} if
\begin{equation} \label{eq: small map}
V(u) \leq L.
\end{equation}
\end{itemize}
Consider the set $\mathcal N$ of branch points associated to  the level $M$ as in \cref{def: branch points}. The set of $u\in \cal N$ that satisfy conditions \eqref{eq: size increments} and \eqref{eq: small map} above is denoted by $\cal G$. The set of other branch points $u\in \cal N \setminus \cal G$ is denoted by $\cal B$.
\end{defn}

The above definition gives the profile of \textbf{good loops} (with labels in $\cal G$), as opposed to \textbf{bad loops} (with labels in $\cal B$). We stress that all the nodes in $\cal G$ or $\cal B$ are branch points in the sense of \cref{def: branch points}, and that all the above definitions, in particular that of $\cal G$ and $\cal B$, depend on $(B,M,A,L)$.  

{
At several points we will need to discuss on the most recent common ancestor of two distinct good loops. It will be convenient to have the following definition.\footnote{Note that any ancestor of two distinct good loops with labels $u$ and $w$ satisfy both properties in \cref{def:calA}, the first one by definition of $\cal G$, and the second one since $u$ and $w$ are branch points in the sense of \cref{def: branch points}.}
\begin{defn}\label{def:calA}
We denote by $\cal A$ the set of nodes $v$ that exhibit moderate increments \eqref{eq: size increments}, and such that $\chi^{(p)}(v_k) \in [M,B]$ for all $1\le k\le |v|$. 
\end{defn}
}

The estimate in \cref{thm: tail biggins} will enable us to control the volume carried by loops for which \eqref{eq: size increments} fails, \textit{i.e.} a typical loop will satisfy  \eqref{eq: size increments} (and we shall see that \eqref{eq: small map} also typically holds).
To this end, we divide the remaining volume $V_{\cal N}$ defined in \eqref{eq: volume branch points} into $V_{\cal N} = V_{\cal G} + V_{\cal B}$, where
\begin{equation} \label{eq: good and bad volumes}
V_{\cal G} := \sum_{u\in \mathcal{G}} V(u),
\quad \text{and} \quad
V_{\cal B} := \sum_{u\in \mathcal{B}} V(u).
\end{equation}

%---------------------------------------------------------------------------------------------------------------%
%		             		 ESTIMATES BAD
%---------------------------------------------------------------------------------------------------------------%
\subsection{First moment estimates on the bad regions of the map}
\label{sec: estimates bad regions}
In this paragraph, we deal with the contribution of the bad vertices by ruling out the corresponding volume $V_{\cal B}$ in the scaling limit as $p\to \infty$. Recall that the definition of ${\cal B}$ involves a quadruplet $(B,M,A,L)$. Recall the notation $\overline{v}_{B,M}(p)$ in \eqref{def:overline small v}. Our main result is the following. 

\begin{thm} \label{thm: volume bad}
There exist some constants $c>0$ and $\eta>0$ such that for all $B\ge M\ge 2$, $L\ge 1$, $A>0$  and $p\in [M,B]$,
\[
\bb E^{(p)} [V_{\cal B}] \le c(A^{-\eta}+\sup_{q\in [1,M]}\mathbb{E}^{(q)}[V\cdot \mathds{1}_{\{V\geq L\}}]) \begin{cases}
    \overline{V}(p) & \textrm{if }  n\in (0,2), \\
    \overline{v}_{B,M}(p)& \textrm{if }  n=2.
\end{cases} 
\]
\end{thm}

\cref{thm: volume bad} will be derived from a bound on the volume of each bad region following \cref{def: bad vertices}. More precisely, let  $V_1=V_1(B,M,A)$ stand for the volume associated to labels $u\in {\mathcal N}$ that do not satisfy \eqref{eq: size increments}. Similarly, let $V_2=V_2(B,M,L)$ be the volume associated to labels $u\in {\mathcal N}$ for which \eqref{eq: small map} fails. Breaking $V_{\cal B}$ as $V_{\cal B} \le V_{1} + V_{2} $, we see that \cref{thm: volume bad} will be a consequence of the following two lemmas. 

\begin{lem} \label{lem: volume V2}
There exists a constant $C>0$ and some $\eta>0$ such that, for all $B\ge M\ge 2$, $A>0$ and $p\in [M,B]$, 
\[\mathbb{E}^{(p)}[V_1]\leq C A^{-\eta} \begin{cases}
    \overline{V}(p) & \textrm{if }  n\in(0,2), \\
    \overline{v}_{B,M}(p)& \textrm{if }  n=2.
\end{cases} \]
\end{lem}

\begin{lem} \label{lem: volume V3}
There exists a constant $c>0$ such that for all  $B\ge p\ge M\ge 1$ and $L\ge 1$, %for large enough $L$, it holds for all $p\ge M$ that
\[\mathbb{E}^{(p)}[V_2] \leq \sup_{q\in [1,M]}\mathbb{E}^{(q)}[V\cdot \mathds{1}_{\{V\geq L\}}] \begin{cases}
    \overline{V}(p) & \textrm{if }  n\in(0,2), \\
    c \overline{v}_{B,M}(p)& \textrm{if }  n=2.
\end{cases} 
 \]
\end{lem}

\noindent The remainder of this section presents the proofs of the above lemmas.

\begin{proof}[Proof of \cref{lem: volume V2}]
In this proof, we {extend the notation $\overline{v}_{B,M}(q)$ by setting} $ \overline{v}_{B,M}(q):=\overline{V}(q)$ if $n\in(0,2)$ for simplicity. Therefore we need to show that $\bb E^{(p)}[V_1]\le C A^{-\eta} \overline{v}_{B,M}(p)$. 

{
\bigskip
\noindent \emph{\underline{Step 1}: Fixing the parameters $\eta$ and $\delta$.}
}
{For future purposes, we prefer to fix right away a couple of parameters that will be used in the later estimates of Step 4.} By \cref{thm: tail biggins}, there exists $\eta>0$ such that 
\[\sup_{p\ge 2}\mathbb{E}^{(p)}\bigg[\bigg(\sum_{\abs{u}=1} \bigg(\frac{\chi^{(p)}(u)}{p}\bigg)^\theta \bigg)^{1+\eta}\bigg] <\infty,\]

\noindent for all $\theta$ close enough to $\theta_\alpha$. Fix such $\eta$ and let $\delta>0$ so that the display above holds for $\theta\in [\theta_\alpha-{2}\delta,\theta_\alpha]$ and ${3}\delta< \mu \eta$. Let $g(x):= x^{\theta_\alpha}(1+\ln_+ \frac1x)^2$ for $x\ge 0$.  Using that $g(x) \le c(x^{\theta_\alpha}+x^{\theta_\alpha - 2\delta})$ for some constant $c>0$, we have that
$(\sum_i g(x_i))^\eta \sum_i x_i^{\theta_{\alpha}-{2}\delta} \le c (\sum_i x_i^{\theta_{\alpha}-{2}\delta} + \sum_i x_i^{\theta_{\alpha}})^{1+\eta}$. Thus Minkowski's inequality implies that 
\begin{equation}\label{proof moderate increments}
    \sup_{p\ge 2}\mathbb{E}^{(p)}\bigg[\bigg(\sum_{\abs{u}=1} g\bigg(\frac{\chi^{(p)}(u)}{p}\bigg) \bigg)^{\eta} \sum_{\abs{u}=1} \bigg(\frac{\chi^{(p)}(u)}{p}\bigg)^{\theta_\alpha-{2}\delta} \bigg] <\infty.
\end{equation}

{
\bigskip
\noindent \emph{\underline{Step 2}: A many-to-one estimate for the expectation of $V_1$.}
}
For a node $u$, let $T(u):= T_B^+(u)\land T_M(u)$. {We note that any} node counting in $V_{1}$ must have {a strict} ancestor $u$ such that $T(u)>|u|$ and $\sum^{\infty}_{i=1} g\Big(\frac{\chi^{(p)}(ui)}{\chi^{(p)}(u)}\Big) > A \Big(\frac{B}{\chi^{(p)}(u)}\Big)^\mu$. We get 
\begin{multline*}
\mathbb{E}^{(p)}[V_{1}] 
\le \mathbb{E}^{(p)}\bigg[ \sum_{u \in \mathcal{U}} \mathds{1}_{\{ T(u)>|u| \}}\cdot \mathds{1}_{\big\{\sum^{\infty}_{i=1} g\big(\frac{\chi^{(p)}(ui)}{\chi^{(p)}(u)}\big) > A \big(\frac{B}{\chi^{(p)}(u)}\big)^\mu\big\}} 
\cdot \sum_{k=1}^{\infty}\sum_{w\in \cal U} V(ukw){\mathds{1}_{\{ukw\in\mathcal{N}\}}}\bigg]. %\mathds{1}_{\{T_B^+(ukw)>T_M(ukw)\}}\mathds{1}_{\{T_M(ukw)=|ukw|\}}\bigg].
\end{multline*}
By the gasket decomposition, the right-hand side is
\begin{multline*} 
\mathbb{E}^{(p)}\bigg[ \sum_{u \in \mathcal{U}} \mathds{1}_{\{ T(u)>|u| \}} \cdot 
\mathbb{E}^{(q)}\bigg[\mathds{1}_{\big\{\sum^{\infty}_{i=1} g\big(\frac{\chi^{(q)}(i)}{q}\big) > A \left(\frac{B}{q}\right)^\mu\big\}}
\cdot \sum_{k=1}^{\infty}\bb 
E^{(\chi^{(q)}(k))}\Big[
%\sum_{w\in \cal U} V(w) \mathds{1}_{\{T_B^+(w)>T_M(w)\}}\mathds{1}_{\{T_M(w)=|w|\}}
{\sum_{w\in \cal N} V(w)}\Big]{\bigg]}_{q=\chi^{(p)}(u)}\bigg].
\end{multline*}
{Using \eqref{eq: expected volume branch points} and \cref{prop: hitting time S}, there exists a constant $c>0$ such that the innermost expectation rewrites\footnote{{Note that the last bound is obvious with $c=1$ when $n\in (0,2)$.}}
\[
    \bb E^{(\chi^{(q)}(k))}\Big[\sum_{w\in \cal N} V(w)
    %\sum_{w\in \cal U} V(w) \mathds{1}_{\{T_B^+(w)>T_M(w)\}} \mathds{1}_{\{T_M(w)=|w|\}}
    \Big]
    =
    \overline{V}(\chi^{(q)}(k)) \tt P_{\chi^{(q)}(k)}(\tt T_M<\tt T_B^+, \tt S_{\tt T_M} \ne 0)
    \leq
    c \overline{v}_{B,M}(\chi^{(q)}(k)) {\mathds{1}_{\{\chi^{(q)}(k)\le B\}}}.
\]
}
\noindent Therefore, we end up with the upper-bound
\begin{equation} \label{eq:_bound_E[V1]}
\mathbb{E}^{(p)}[V_{1}] 
\leq  c
\mathbb{E}^{(p)}\bigg[ \sum_{u \in \mathcal{U}} \mathds{1}_{\{ T(u)>|u| \}}\cdot F(\chi^{(p)}(u))\bigg],
\end{equation}
{where 
\[
F(q) = \mathbb{E}^{(q)}\bigg[\mathds{1}_{\big\{\sum^{\infty}_{i=1} g\big(\frac{\chi^{(q)}(i)}{q}\big) > A \left(\frac{B}{q}\right)^\mu\big\}} \cdot \sum^{\infty}_{k=1} \overline{v}_{B,M}(\chi^{(q)}(k)) {\mathds{1}_{\{\chi^{(q)}(k)\le B\}}}\bigg].
\]
}
{
\bigskip
\noindent \emph{\underline{Step 3}: Further bound on the terms $\overline{v}_{B,M}(\chi^{(q)}(k))$.}
}
{We now want to simplify the inequality \eqref{eq:_bound_E[V1]} by further estimating $\overline{v}_{B,M}(r)$ when $r\in[1,B]$.}
Given the asymptotic behaviour \eqref{eq: mean volume} of $\overline{V}(r)$, {when $n<2$} there exists a constant $C>1$ such that $\frac{1}{C} r^{\theta_\alpha} \le \overline{V}(r) \le C r^{\theta_{\alpha}}$ for all $r\ge 1$. When $n = 2$ and $g=\frac{h}{2}$, we see from \eqref{eq: mean volume 2} that there exists a constant $C>1$ such that $\frac{1}{C} r^{\theta_\alpha}\le \overline{V}(r) \le C r^{\theta_{\alpha}}$ for all $r\ge 1$. Thus by definition of $\overline{v}_{B,M}$ in \eqref{def:overline small v}, for all ${q}, r \in [1,B]$, 
\[
\overline{v}_{B,M}(r)\le C r^2 \frac{1+\ln(B)-\ln(r)}{1+\ln(B)-\ln(M)}
\le C^2 \overline{v}_{B,M}(q) \frac{1+\ln(B)-\ln(r)}{1+\ln(B)-\ln(q)}\left( \frac{r}{q}\right)^2 .
\]

\noindent When $n=2$ and $g<\frac{h}{2}$, by \eqref{eq: mean volume 2} there exists a constant $C>1$ such that $\frac{1}{C} r^{2}(1 + \ln r)^{-1} \le \overline{V}(r) \le C r^{2}(1 + \ln r)^{-1}$ for all $r\ge 1$. Thus in that case, for all $q, r\in [1,B]$,
\[
\overline{v}_{B,M}(r)\le C \frac{r^2}{1 + \ln (r)} \cdot \frac{1+\ln(B)-\ln(r)}{1+\ln(B)-\ln(M)}
\le C^2 \overline{v}_{B,M}(q) \frac{1+\ln(B)-\ln(r)}{1+\ln(B)-\ln(q)} \cdot \frac{\ln (q) + 1}{\ln (r) + 1} \left( \frac{r}{q}\right)^2 .
\]

\noindent 
{Recall the parameter $\delta$ introduced in Step 1.}
Note that 
\[\frac{1+\ln(B)-\ln(r)}{1+\ln(B)-\ln(q)} = 1 + \frac{\ln(q/r)}{1+\ln(B)-\ln(q)} \leq c\left(\frac{q}{r}\right)^{\delta},\]
\noindent and 
\[\frac{1 + \ln (q)}{1 + \ln (r)} = 1 + \frac{\ln(q/r)}{1 + \ln (r)} \leq c\left(\frac{q}{r}\right)^{\delta}. \]
We conclude that in any case, for all $q, r\in [1, B]$,
\[
\overline{v}_{B,M}(r) \le c \overline{v}_{B,M}(q) \left( \frac{r}{q}\right)^{\theta_\alpha} \left(\frac{B}{r}\right)^{2\delta} { = c \overline{v}_{B,M}(q) \left( \frac{r}{q}\right)^{\theta_\alpha - 2\delta} \left(\frac{B}{q}\right)^{2\delta}} . 
\]
\noindent  {Therefore, 
\[
F(q) \leq c\overline{v}_{B,M}(q) \left(\frac{B}{q}\right)^{2\delta} \mathbb{E}^{(q)}\bigg[\mathds{1}_{\big\{\sum^{\infty}_{i=1} g\big(\frac{\chi^{(q)}(i)}{q}\big) > A \left(\frac{B}{q}\right)^\mu \big\}} \cdot\sum^{\infty}_{k=1} \left(\frac{\chi^{(q)}(k)}{q}\right)^{\theta_\alpha-{2}\delta}\bigg]. 
\]
}

{
\bigskip
\noindent \emph{\underline{Step 4}: Conclusion using Step 1.}
}
{
Given the form of the above display, we are now in a position to use the moment estimates relative to the two parameters $\delta$ and $\eta$ introduced in Step 1. Indeed, by} \eqref{proof moderate increments}, for all $q\ge 2$,
\[
\mathbb{E}^{(q)}\bigg[\mathds{1}_{\big\{\sum^{\infty}_{i=1} g\big(\frac{\chi^{(q)}(i)}{q}\big) > A \left(\frac{B}{q}\right)^\mu \big\}} \cdot \sum^{\infty}_{i=1} \left(\frac{\chi^{(q)}(i)}{q}\right)^{\theta_\alpha-{2}\delta}\bigg] 
\le
 C A^{-\eta} \left(\frac{B}{q}\right)^{-\mu\eta}.
\]
{Let us briefly discuss according to the cases $n<2$ or $n=2$:}
\begin{itemize}
\item For $n<2$, {the previous two displays and \eqref{eq:_bound_E[V1]} entail}
\[
\mathbb{E}^{(p)}[V_{1}]
\le  
c A^{-\eta} 
\mathbb{E}^{(p)}\bigg[\sum_{u\in \mathcal{U}}\mathds{1}_{\{T(u)>|u| \}} \overline{V}(\chi^{(p)}(u)) \left(\frac{B}{\chi^{(p)}(u)}\right)^{2\delta -\mu\eta} \bigg].
\]
By the many-to-one formula (\cref{prop: many to one}), this rewrites
\[
\mathbb{E}^{(p)}[V_{1}]
\le  
c  A^{-\eta} \overline{V}(p) {\tt E}_p\bigg[\sum_{n=0}^{{\tt T}_{B}^+\land {\tt T}_M-1} \left(\frac{B}{{\tt S_n}}\right)^{2\delta-\mu \eta} \bigg],
\]
and using $2\delta< \mu\eta$ we conclude by \cref{c:sum moments} (i) that 
\[
\mathbb{E}^{(p)}[V_{1}]
\le
c   A^{-\eta} \overline{V}(p) \bigg(\frac{B}{p} \bigg)^{-\gamma}
\le
c   A^{-\eta} \overline{V}(p).
\]

\item Likewise, for $n=2$,
\[
\mathbb{E}^{(p)}[V_{1}]
\le
C A^{-\eta} 
\mathbb{E}^{(p)}\left[\sum_{u\in \mathcal{U}}\mathds{1}_{\{{T(u)}>|u| \}} \overline{V}(\chi^{(p)}(u))\frac{1 + \ln B - \ln \chi^{(p)}(u)}{1 + \ln B - \ln M} \left(\frac{B}{\chi^{(p)}(u)}\right)^{2\delta -\mu\eta} \right].
\]

\noindent By another use of the many-to-one formula, 
\[
\mathbb{E}^{(p)}[V_{1}]
\le
C   A^{-\eta} \overline{v}_{B,M}(p) {\tt E}_p\bigg[\sum_{n=0}^{{\tt T}_{B}^+\land {\tt T}_M-1} \left(\frac{B}{{\tt S_n}}\right)^{2\delta-\mu \eta} \frac{1 + \ln B - \ln {\tt S}_n}{1 + \ln B - \ln p}\bigg].
\]

\noindent Using 
\[\frac{1 + \ln B - \ln {\tt S}_n}{1 + \ln B - \ln p} = 1 + \frac{\ln(p/{\tt S}_n)}{1 + \ln B - \ln p} \leq c\left(\frac{p}{{\tt S_n}}\right)^{\delta}, \]

\noindent $p\leq B$ and $3\delta \leq \mu\eta$, we conclude again by \cref{c:sum moments} (ii).
\end{itemize}

\noindent {In any case, this concludes the proof of \cref{lem: volume V2}.}
\end{proof}

\begin{proof}[Proof of \cref{lem: volume V3}]
We have, by the gasket decomposition (\cref{prop: spatial markov gasket}),
\begin{align*}
\mathbb{E}^{(p)}[V_{2}]&= \mathbb{E}^{(p)}\bigg[\sum_{u\in\mathcal{N}} V(u)\cdot \mathds{1}_{\{V(u)\geq L\}}\bigg]\\
&\leq \mathbb{E}^{(p)}\bigg[\sum_{u\in\mathcal{N}} \mathbb{E}^{(\chi^{(p)}(u))}[V\cdot \mathds{1}_{\{V\geq L\}}]\bigg]\\
&\leq \sup_{q\in [1,M]}\mathbb{E}^{(q)}[V\cdot \mathds{1}_{\{V\geq L\}}] \mathbb{E}^{(p)}[|\cal N|] \leq \sup_{q\in [1,M]}\mathbb{E}^{(q)}[V\cdot \mathds{1}_{\{V\geq L\}}] \mathbb{E}^{(p)}[V_{\cal N}],
\end{align*}

 \noindent since the definition of $u\in \cal N$ implies that $\chi^{(p)}(u)<M$. By \cref{eq: expected volume branch points}, $\mathbb{E}^{(p)}[V_\mathcal N]\le  \overline{V}(p) {\tt P}_p({\tt T}_M<{\tt T}_B^+)$ and we bound $\overline{V}(p) {\tt P}_p({\tt T}_M<{\tt T}_B^+)$ by $\overline{V}(p)$ if $n\in (0,2)$, and by $c \overline{v}_{B,M}(p)$ (thanks to \cref{prop: hitting time S}) if $n=2$.
\end{proof}

%---------------------------------------------------------------------------------------------------------------%
%		             		 ESTIMATES GOOD
%---------------------------------------------------------------------------------------------------------------%
\subsection{Second moment estimate on the good region of the map}
\label{sec: estimates good region}

We complete the picture by now giving second moment estimates for the \emph{good} region of the map, corresponding to labels in $\cal G$ (see \cref{def: bad vertices}).  
Recall that $V_{\cal G}$ denotes the total volume of good loops (as in \eqref{eq: good and bad volumes}) and that it depends on a quadruplet $(B,M,A,L)$.
\begin{thm} \label{thm: second moment good volume}
(i) Suppose $n\in (0,2)$. Let $\gamma>0$ be as in Proposition \ref{prop: hitting S infinite} with $\gamma\in (0,\theta_\alpha -2\mu)$. There exists a constant $C>0$ such that for all $(B,M,A,L)$, and all $p\in [M,B]$,  
\[\mathbb{E}^{(p)}[V_{\cal G}^2]\leq \overline{V}(p) \Big( C A^2 B^{\theta_\alpha} \Big(\frac{B}{p}\Big)^{-\gamma}  + L^2 \Big).\]

(ii) Suppose $n=2$ and $g = \frac{h}{2}$. There exists a constant $C>0$ such that for all $(B,M,A,L)$, and all $p\in [1,B]$,  \[\mathbb{E}^{(p)}[V_{\cal G}^2]\leq C \overline{V}(p)   A^2 B^2 (1+\ln B -\ln M)^{-2}  + L^2\overline{V}(p).\]

(iii) Suppose $n=2$ and $g < \frac{h}{2}$. There exists a constant $C>0$ such that for all $(B,M,A,L)$, and all $p\in [1,B]$,  \[\mathbb{E}^{(p)}[V_{\cal G}^2]\leq C \overline{V}(p)   A^2 B^2 (1+\ln B -\ln M)^{-3}  + L^2\overline{V}(p).\] 
\end{thm}

{
We stress right away that the extra gain of the exponent $\gamma$ in \cref{thm: second moment good volume}(i) will be important in the proof of \cref{thm: main} to send the right-hand side of \eqref{eq:cvg sup ref} to $0$. On the other hand, the logarithmic terms in \cref{thm: second moment good volume}(ii) and (iii) will compensate the extra logarithmic corrections that appear in the case $n=2$ (see \eqref{eq: ref log corrections}).
}

\begin{proof}

{
We first split the expectation into squares and cross terms:
\begin{equation} \label{eq: good volume squares + cross}
\mathbb{E}^{(p)}[V_{\cal G}^2] = \mathbb{E}^{(p)}\bigg[\bigg(\sum_{u\in\cal G} V(u) \bigg)^2\bigg]\\
=
\mathbb{E}^{(p)}\bigg[\sum_{u\in \cal G}V(u)^2\bigg] + \bb E^{(p)} \bigg[ \sum_{u,w\in \cal G} V(u)V(w) \mathds{1}_{\{u\ne w\}}\bigg].
\end{equation}
Let us start with the first term of \eqref{eq: good volume squares + cross}. For this we only use the condition \eqref{eq: small map} that $u$ should contain a small map,
\[
\mathbb{E}^{(p)}\bigg[\sum_{u\in \cal G}V(u)^2\bigg]
\le
L^2 \mathbb{E}^{(p)}[|\cal G|],
\]
and we use the rough estimate $|\cal G| \le V$ to conclude that
\[ %\label{eq: square term final}
\mathbb{E}^{(p)}\bigg[\sum_{u\in \cal G}V(u)^2\bigg]
\le
 L^2 \overline{V}(p).
\]
{This term corresponds to the final term in all three estimates (i), (ii), (iii). Hence, the rest of the proof will focus on the other term of \eqref{eq: good volume squares + cross}. For convenience, let us denote this cross term by
\[
\times(p)
:=
\bb E^{(p)} \bigg[ \sum_{u,w\in \cal G} V(u)V(w) \mathds{1}_{\{u\ne w\}}\bigg].
\]
We stress that $\times(p)$ also depends on the set of parameters $(B,M,A,L)$ through $\cal G$, but we prefer to keep it implicit so as to lighten the notation.
}
We henceforth divide the proof into several steps: we first simplify $\times(p)$ (Step 0), and then distinguish according to the cases (i), (ii) and (iii) in the statement.
}

{
\bigskip
\noindent \emph{\underline{Step 0}: Expanding $\times(p)$ using the many-to-one formula.}
}
The main idea is to use the many-to-one formula combining the two conditions that make up a \emph{good} loop. 
We discuss on the last common ancestor $v$ of each $u,w\in \cal G$ in the sum. {Recall the set $\cal A$ of \cref{def:calA}, describing the set of possible ancestors of any two distinct good loops. Splitting over the possible ancestors $v\in\cal A$, we end up with the following sum:}
\[
{\times(p)}
=
\mathbb{E}^{(p)}\bigg[\sum_{v\in\cal A}
\sum^{\infty}_{m=1}\sum_{\abs{u}=m}\sum^{\infty}_{k=1}\sum_{\abs{w}=k}\mathds{1}_{\{u_1\ne w_1\}}V(vu) \mathds{1}_{\{vu\in\cal{G}\}}V(vw)\mathds{1}_{\{vw\in\cal{G}\}}\bigg].
\]
Introduce the event
\begin{equation} \label{eq: def G_q}
G_q:=\bigg\{\sum^{\infty}_{i=1} (\chi^{(q)}(i))^{\theta_{\alpha}}\bigg(1+\ln_+ \frac{q}{\chi^{(q)}(i)}\bigg)^{{2}} \leq A \left( \frac{B}{q} \right)^{\mu} q^{\theta_{\alpha}}\bigg\}.
\end{equation}
By the gasket decomposition (\cref{prop: spatial markov gasket}) and a crude bound, the above display yields
\begin{equation}
    {\times(p)}
    \le
\mathbb{E}^{(p)}\bigg[\sum_{v\in\cal A}
\mathbb{E}^{(q)}\bigg[{\mathds{1}_{G_q}\cdot}\sum^{\infty}_{m=1}\sum_{\abs{u}=m}\sum^{\infty}_{k=1}\sum_{\abs{w}=k}\mathds{1}_{\{u_1\ne w_1\}}V(u) \mathds{1}_{\{u\in\cal{N}\}}V(w)\mathds{1}_{\{w\in\cal{N}\}} \bigg]_{q=\chi^{(p)}(v)}\bigg]. \label{eq: cross term gasket decomposition interm}
\end{equation}
{To avoid cumbersome expressions, it will be convenient to set
\[
E_{m,k}(q)
:=
\mathbb{E}^{(q)}\bigg[\mathds{1}_{G_q}\cdot\sum_{\abs{u}=m}\sum_{\abs{w}=k}\mathds{1}_{\{u_1\ne w_1\}}V(u) \mathds{1}_{\{u\in\cal{N}\}}V(w)\mathds{1}_{\{w\in\cal{N}\}}\bigg],
\quad m,k\geq 1,
\]
and
\[
E(q)
:=
\sum_{m=1}^{\infty}\sum_{k=1}^{\infty} E_{m,k}(q)
=
\mathbb{E}^{(q)}\bigg[\mathds{1}_{G_q}\cdot\sum^{\infty}_{m=1}\sum_{\abs{u}=m}\sum^{\infty}_{k=1}\sum_{\abs{w}=k}\mathds{1}_{\{u_1\ne w_1\}}V(u) \mathds{1}_{\{u\in\cal{N}\}}V(w)\mathds{1}_{\{w\in\cal{N}\}}\bigg],
\]
so that \eqref{eq: cross term gasket decomposition interm} becomes
\begin{equation}
%\bb E^{(p)} \bigg[ \sum_{u,w\in \cal G} V(u)V(w) \mathds{1}_{\{u\ne w\}}\bigg] 
{\times(p)}
\le
\mathbb{E}^{(p)}\bigg[\sum_{v\in\cal A} E(\chi^{(p)}(v))\bigg]. \label{eq: cross term gasket decomposition}
\end{equation}
}

\begin{figure}
\bigskip
\begin{center}
\includegraphics[scale=0.9]{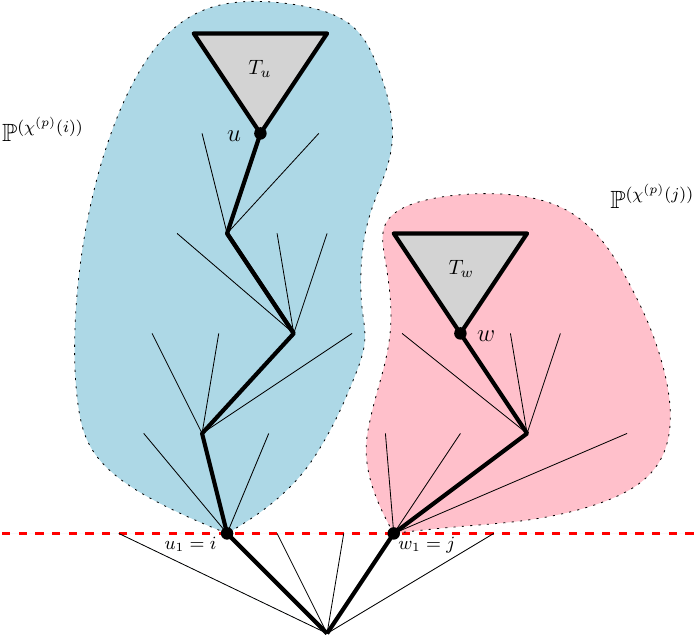}
\end{center}
\caption{{The conditional independence property of the maps inside the loops $u_1$ and $w_1$. Here $u$ and $w$ are nodes at generations $m$ and $k$ respectively, and we are calculating functionals of the subtrees $T_u$ and $T_w$ rooted at $u$ and $w$. Conditional on the first generation (red dashed line), the maps inside $u_1=i$ and $w_1=j$ (outlined by the blue and red regions) are independent with respective laws $\bb P^{(\chi^{(p)}(i))}$ and $\bb P^{(\chi^{(p)}(j))}$.}}
\label{fig: trees_idp_map}
\end{figure}

{Fix some half-perimeter $M\le q\le B$ and some integers \added{$m,k\ge 1$}.} Then using the independence of the maps inside the loops labelled by {$u_1$ and $w_1$} (\cref{prop: spatial markov gasket}) {as represented in \cref{fig: trees_idp_map}}, we get 
\[
%\bb E^{(q)} \bigg[\sum_{\abs{u}=m}\sum_{\abs{w}=k}\mathds{1}_{\{u_1\ne w_1\}}V(u) \mathds{1}_{\{u\in\cal{N}\}}V(w)\mathds{1}_{\{w\in\cal{N}\}} \mathds{1}_{G_q}\bigg]  
{E_{m,k}(q)} 
\le \bb E^{(q)} \bigg[{\mathds{1}_{G_q}\cdot}\sum_{i\ne j}  \bb E^{(\chi^{(q)}(i))}\bigg[\sum_{\abs{u}=m-1} V(u)\mathds{1}_{\{u\in\cal{N}\}}\bigg] \cdot \bb E^{(\chi^{(q)}(j))}\bigg[\sum_{\abs{w}=k-1} V(w)\mathds{1}_{\{w\in\cal{N}\}}\bigg] \bigg].
\]

{\noindent Summing over all \added{$m,k\ge 1$}, we obtain}
\[
{E(q)
\le \bb E^{(q)} \bigg[\mathds{1}_{G_q}\cdot\sum_{i\ne j} \bb E^{(\chi^{(q)}(i))}\Big[\sum_{u\in\mathcal{N}} V(u)\Big] \cdot \bb E^{(\chi^{(q)}(j))}\Big[\sum_{w\in \cal N} V(w)\Big] \bigg].}
\]
\noindent We use {\eqref{eq: expected volume branch points}} to rewrite the two inner expectations: 
\begin{align}\label{eq: cross term T_M<T_B}
{E(q)} 
&\le \bb E^{(q)} \bigg[{\mathds{1}_{G_q}\cdot}\sum_{i\ne j} \overline{V}(\chi^{(q)}(i))\overline{V}(\chi^{(q)}(j)) \tt P_{\chi^{(q)}(i)}(\tt T_M<{\tt T}_{B}^+) \cdot \tt P_{\chi^{(q)}(j)}(\tt T_M<{\tt T}_{B}^+) \bigg]\nonumber\\
&{\le \bb E^{(q)}\bigg[\mathds{1}_{G_q}\cdot\Big(\sum^{\infty}_{i=1}\overline{V}(\chi^{(q)}(i))\tt P_{\chi^{(q)}(i)}(\tt T_M<{\tt T}_{B}^+)\Big)^2\bigg]. }
\end{align}
{From here, we divide the proof into several steps, according to the three items of \cref{thm: second moment good volume}.}

{
\bigskip
\noindent \emph{\underline{Step 1}: Proof of item (i), for $n\in(0,2)$.}
%\emph{First suppose that $n\in(0,2)$.}
} 
{Bounding the probability by 1}, and using the asymptotic behaviour \eqref{eq: mean volume} of $\overline{V}$, the last inequality becomes
\begin{equation}
 \label{eq: cross term m,k} 
 {E(q)}
 %\le  C  \bb E^{(q)} \bigg[\sum_{i\ne j} \mathds{1}_{G_q} \cdot   (\chi^{(q)}(i))^{\theta_{\alpha}}  \cdot (\chi^{(q)}(j))^{\theta_{\alpha}}\bigg] \nonumber 
{\leq C  \bb E^{(q)} \bigg[ \mathds{1}_{G_q} \cdot \bigg(\sum^{\infty}_{i = 1}  (\chi^{(q)}(i))^{\theta_{\alpha}}\bigg)^2  \bigg]}
\le C \left(\frac{B}{q}\right)^{2\mu} A^2 q^{2\theta_\alpha}
\le C \left(\frac{B}{q}\right)^{2\mu} A^2 q^{\theta_\alpha}  \overline{V}(q), 
\end{equation}

\noindent where in the {second-to-last inequality we used the definition of $G_q$ by ignoring the logarithmic terms in \eqref{eq: def G_q}.} We now plug the latter inequality \eqref{eq: cross term m,k} back into \eqref{eq: cross term gasket decomposition}. Then
\[
 %\bb E^{(p)} \bigg[ \sum_{u,w\in \cal G} V(u)V(w) \mathds{1}_{\{u\ne w\}}\bigg] 
 {\times(p)}
\le
C  B^{2\mu} A^2 \mathbb{E}^{(p)}\bigg[\sum_{v\in\cal A} ( \chi^{(p)}(v))^{\theta_\alpha-2\mu}
\overline{V}(\chi^{(p)}(v))\bigg]. 
\]

\noindent By definition of $\cal A$, a node $v\in \cal A$ stays below $B$, so that
\[
 %\bb E^{(p)} \bigg[ \sum_{u,w\in \cal G} V(u)V(w) \mathds{1}_{\{u\ne w\}}\bigg] 
 {\times(p)}
\le
C  B^{2\mu} A^2 \mathbb{E}^{(p)}\bigg[\sum_{v\in\cal U} \mathds{1}_{\{T^+_B(v)>|v|\}} \cdot ( \chi^{(p)}(v))^{\theta_\alpha-2\mu}
\overline{V}(\chi^{(p)}(v))\bigg]. 
\]

\noindent By the many-to-one formula,  we can bound the right-hand side from above as
\[
%\bb E^{(p)} \bigg[ \sum_{u,w\in \cal G} V(u)V(w) \mathds{1}_{\{u\ne w\}}\bigg] 
{\times(p)}
\le 
C \overline{V}(p) B^{2\mu}A^2 {\tt E}_p\bigg[\sum_{n=0}^{{\tt T}_{B}^+-1}  {\tt S}_n^{\theta_\alpha-2\mu}\bigg],
\]
and we apply  \cref{c:sum moments} (i) to complete the proof of (i).

{
\bigskip
\noindent \emph{\underline{Step 2}: Proof of item (ii), for $n=2$ and $g = \frac{h}{2}$.}
} 
% {
% \medskip
% We turn to the proof of (ii),  hence \emph{we suppose that $n=2$ and $g = \frac{h}{2}$}.}
In this case, we use \cref{prop: hitting time S} and the asymptotic behaviour \eqref{eq: mean volume 2} of $\overline{V}$ to deduce the bound, for all $r\ge 1$, 
\[\overline{V}(r) {\tt P}_{r}({\tt T}_M<{\tt T}_{B}^+)\le c   \frac{1+\ln_+\big(\frac{B}{r}\big)}{1+\ln(B)-\ln(M)} r^2.\]

\noindent Starting from \eqref{eq: cross term T_M<T_B}, this yields
\[
%\mathbb{E}^{(q)}\bigg[\sum^{\infty}_{m=1}\sum_{\abs{u}=m}\sum^{\infty}_{k=1}\sum_{\abs{w}=k}\mathds{1}_{\{u_1\ne w_1\}}V(u) \mathds{1}_{\{u\in\cal{N}\}}V(w)\mathds{1}_{\{w\in\cal{N}\}} \mathds{1}_{G_q}\bigg] \\
{E(q)}
\le  c  \bb E^{(q)} \Bigg[{\mathds{1}_{G_q}\cdot\Bigg(\sum^{\infty}_{i=1}(\chi^{(q)}(i))^2\bigg( \frac{1+\ln_+\Big(\frac{B}{\chi^{(q)}(i)}\Big)}{1+\ln(B)-\ln(M)}\bigg)\Bigg)^2}\bigg]. 
%(  \chi^{(q)}(i) )^2  \cdot (\chi^{(q)}(j))^2 \Bigg(  \frac{1+\ln_+\Big(\frac{B}{\chi^{(q)}(i)}\Big)}{1+\ln(B)-\ln(M)} \Bigg) \Bigg(  \frac{1+\ln_+\Big(\frac{B}{\chi^{(q)}(j)}\Big)}{1+\ln(B)-\ln(M)} \Bigg)\bigg]. 
\]
In order to use the condition \eqref{eq: def G_q} in $G_q$, we now use the inequality, for $q\in [1,B]$
\begin{equation} \label{eq: inequality log Gq}
1+\ln_+\bigg(\frac{B}{\chi^{(q)}(i)}\bigg)
\le 
(1+\ln(B)-\ln q)  \left(1+\ln_+ \frac{q}{\chi^{(q)}(i)}\right).
\end{equation}
We end up with 
\begin{equation*}
%\mathbb{E}^{(q)}\bigg[\sum^{\infty}_{m=1}\sum_{\abs{u}=m}\sum^{\infty}_{k=1}\sum_{\abs{w}=k}\mathds{1}_{\{u_1\ne w_1\}}V(u) \mathds{1}_{\{u\in\cal{N}\}}V(w)\mathds{1}_{\{w\in\cal{N}\}} \mathds{1}_{G_q}\bigg] \\
{E(q)} \le \\
c  \left(\frac{1+\ln B -\ln q}{1+\ln B-\ln M}\right)^2 \bb E^{(q)} \bigg[{\mathds{1}_{G_q}\cdot\Bigg(\sum^{\infty}_{i=1}(\chi^{(q)}(i))^2\Big(1+\ln_+ \frac{q}{\chi^{(q)}(i)}\Big)\Bigg)^2}\bigg].
%\bb E^{(q)} \bigg[\sum_{i\ne j} \mathds{1}_{G_q} \cdot  (  \chi^{(q)}(i) )^2  \cdot (\chi^{(q)}(j))^2 \left(1+\ln_+ \frac{q}{\chi^{(q)}(i)}\right) \left(1+\ln_+ \frac{q}{\chi^{(q)}(j)}\right)\bigg]. 
\end{equation*}
Finally, we use the event $G_q$ to obtain the bound
\[
%\mathbb{E}^{(q)}\bigg[\sum^{\infty}_{m=1}\sum_{\abs{u}=m}\sum^{\infty}_{k=1}\sum_{\abs{w}=k}\mathds{1}_{\{u_1\ne w_1\}}V(u) \mathds{1}_{\{u\in\cal{N}\}}V(w)\mathds{1}_{\{w\in\cal{N}\}} \mathds{1}_{G_q}\bigg] \\
{E(q)}
\le c \left(\frac{B}{q}\right)^{2\mu} A^2 q^4 \left(\frac{1+\ln B -\ln q}{1+\ln B-\ln M}\right)^2 \le  C \left(\frac{B}{q}\right)^{2\mu} A^2 q^2 \left(\frac{1+\ln B -\ln q}{1+\ln B-\ln M}\right)^2 \overline{V}(q).
\]

\noindent Going back to \eqref{eq: cross term gasket decomposition}, we deduce
\[
{\times(p)}
\le
C  B^{2\mu} A^2 \mathbb{E}^{(p)}\bigg[\sum_{v\in\cal A} \left(\frac{1+\ln B -\ln  \chi^{(p)}(v)}{1+\ln B-\ln M}\right)^2 \left( \chi^{(p)}(v)\right)^{2-2\mu}
\overline{V}(\chi^{(p)}(v))\bigg].
\]

\noindent The many-to-one formula then yields the upper-bound 
\begin{align*}
%\bb E^{(p)} \bigg[ \sum_{u,w\in \cal G} V(u)V(w) \mathds{1}_{\{u\ne w\}}\bigg]
{\times(p)}
&\le
C  B^{2\mu} A^2 \overline{V}(p) {\tt E}_p\bigg[\sum_{n=0}^{\tt T_{B}^+ \wedge \tt T_M-1}
  \left(\frac{1+\ln B -\ln  \tt S_n}{1+\ln B-\ln M}\right)^2  {\tt S}_n^{2-2\mu}\bigg] \\
&\le
\frac{C  B^{2\mu} A^2 \overline{V}(p)}{(1+\ln B-\ln M)^2} {\tt E}_p\bigg[\sum_{n=0}^{ \tt T_{B}^+ \wedge \tt T_M-1} \bigg(\frac{B}{{\tt S}_n}\bigg)^\delta  {\tt S}_n^{2-2\mu}\bigg],
\end{align*}

\noindent where $\delta$ is some constant in $(0, 2-2\mu)$ and we used that $1+\ln x\le c x^{\frac{\delta}{2}}$ for $x \ge 1$. We conclude by Corollary \ref{c:sum moments} (ii) that
\begin{equation} \label{eq:cross terms final}
%\bb E^{(p)} \bigg[ \sum_{u,w\in \cal G} V(u)V(w) \mathds{1}_{\{u\ne w\}}\bigg]
{\times(p)}
\le
C  B^{2} A^2 \frac{\overline{V}(p)}{(1+\ln B-\ln M)^2},
\end{equation}
{which concludes the proof of (ii).}
%The result follows by combining \eqref{eq: square term final} and \eqref{eq:cross terms final}. 

{
\bigskip
\noindent \emph{\underline{Step 3}: Proof of item (iii), for $n=2$ and $g < \frac{h}{2}$.}
} 
% {
% \medskip
% We now prove (iii). \emph{Suppose $n = 2$ and $g < \frac{h}{2}$.}} 
Likewise, using \cref{prop: hitting time S} and \eqref{eq: mean volume 2} we deduce the bound, for all $r\ge 1$, 
\[\overline{V}(r) {\tt P}_{r}({\tt T}_M<{\tt T}_{B}^+)\le c   \frac{1+\ln_+\big(\frac{B}{r}\big)}{1+\ln(B)-\ln(M)} \cdot \frac{r^2}{1 + \ln r}.\]

\noindent Starting from \eqref{eq: cross term T_M<T_B}, this yields
\begin{equation} \label{eq: second moment bound (iii)}
%\mathbb{E}^{(q)}\bigg[\sum^{\infty}_{m=1}\sum_{\abs{u}=m}\sum^{\infty}_{k=1}\sum_{\abs{w}=k}\mathds{1}_{\{u_1\ne w_1\}}V(u) \mathds{1}_{\{u\in\cal{N}\}}V(w)\mathds{1}_{\{w\in\cal{N}\}} \mathds{1}_{G_q}\bigg] \\
{E(q)}
\le  c  \bb E^{(q)} \Bigg[{\mathds{1}_{G_q}\cdot\Bigg(\sum^{\infty}_{i=1}\frac{(\chi^{(q)}(i))^2}{1+\ln\chi^{(q)}(i)}\bigg( \frac{1+\ln_+\Big(\frac{B}{\chi^{(q)}(i)}\Big)}{1+\ln(B)-\ln(M)}\bigg)\Bigg)^2}\Bigg].
%\bb E^{(q)} \bigg[\sum_{i\ne j} \mathds{1}_{G_q} \cdot \frac{(\chi^{(q)}(i))^2}{1 + \ln \chi^{(q)}(i)} \cdot \frac{(\chi^{(q)}(j))^2}{1 + \ln \chi^{(q)}(j)} \Bigg(  \frac{1+\ln_+\Big(\frac{B}{\chi^{(q)}(i)}\Big)}{1+\ln(B)-\ln(M)} \Bigg) \Bigg(  \frac{1+\ln_+\Big(\frac{B}{\chi^{(q)}(j)}\Big)}{1+\ln(B)-\ln(M)} \Bigg)\bigg]. 
\end{equation}
In order to use the condition \eqref{eq: def G_q} in $G_q$, we first combine \eqref{eq: inequality log Gq} with the inequality
\[
\frac{1}{1 + \ln \chi^{(q)}(i)}
\le 
\frac{1 + \ln_+ \Big(q/\chi^{(q)}(i)\Big)}{1 + \ln q},
\]
to get for $q\in [1,B]$,
\begin{equation}\label{eq: second order proof inequality iii}
\frac{1+\ln_+\Big(B/\chi^{(q)}(i)\Big)}{1 + \ln \chi^{(q)}(i)}
\le 
\frac{1+\ln(B)-\ln q}{1 + \ln q}  \Big(1 + \ln_+ \Big(q/\chi^{(q)}(i)\Big)\Big)^2.
\end{equation}

\noindent {Plugging the latter} inequality \eqref{eq: second order proof inequality iii} back into \eqref{eq: second moment bound (iii)}, we have
\begin{equation*}
%\mathbb{E}^{(q)}\bigg[\sum^{\infty}_{m=1}\sum_{\abs{u}=m}\sum^{\infty}_{k=1}\sum_{\abs{w}=k}\mathds{1}_{\{u_1\ne w_1\}}V(u) \mathds{1}_{\{u\in\cal{N}\}}V(w)\mathds{1}_{\{w\in\cal{N}\}} \mathds{1}_{G_q}\bigg] \le \\
{E(q)} \le \\
c  \left(\frac{1+\ln B -\ln q}{1+\ln B-\ln M}\right)^2{\frac{1}{(1+\ln q)^2}} \bb E^{(q)}\Bigg[{\mathds{1}_{G_q}\cdot\Bigg(\sum^{\infty}_{i=1}(\chi^{(q)}(i))^2\left(1 + \ln_+ \frac{q}{\chi^{(q)}(i)}\right)^2\Bigg)^2}\Bigg].
%\bb E^{(q)} \bigg[\sum_{i\ne j} \mathds{1}_{G_q} \cdot \frac{(\chi^{(q)}(i))^2(\chi^{(q)}(j))^2}{(1 + \ln q)^2}\left(1 + \ln_+ \frac{q}{\chi^{(q)}(i)}\right)^2 \left(1+\ln_+ \frac{q}{\chi^{(q)}(j)}\right)^2 \bigg]. 
\end{equation*}
Finally, we use the event $G_q$ to obtain the bound
\begin{align*}
%\mathbb{E}^{(q)}\bigg[\sum^{\infty}_{m=1}\sum_{\abs{u}=m}\sum^{\infty}_{k=1}\sum_{\abs{w}=k}\mathds{1}_{\{u_1\ne w_1\}}V(u) \mathds{1}_{\{u\in\cal{N}\}}V(w)\mathds{1}_{\{w\in\cal{N}\}} \mathds{1}_{G_q}\bigg] \\
{E(q)}
&\le c \left(\frac{B}{q}\right)^{2\mu} A^2 \frac{q^4}{(1 + \ln q)^2} \left(\frac{1+\ln B -\ln q}{1+\ln B-\ln M}\right)^2 \\
&\le  c \left(\frac{B}{q}\right)^{2\mu} A^2 \frac{q^2}{1 + \ln q} \left(\frac{1+\ln B -\ln q}{1+\ln B-\ln M}\right)^2 \overline{V}(q).
\end{align*}

\noindent Going back to \eqref{eq: cross term gasket decomposition}, we deduce
% \begin{multline*}
%     \bb E^{(p)} \bigg[ \sum_{u,w\in \cal G} V(u)V(w) \mathds{1}_{\{u\ne w\}}\bigg] \\
% \le
% C  B^{2\mu} A^2 \mathbb{E}^{(p)}\bigg[\sum_{v\in\cal A} \left(\frac{1+\ln B -\ln  \chi^{(p)}(v)}{1+\ln B-\ln M}\right)^2 \frac{\left(\chi^{(p)}(v)\right)^{2-2\mu}}{1 + \ln \chi^{(p)}(v)}
% \overline{V}(\chi^{(p)}(v))\bigg].
% \end{multline*}
\[
{\times(p)}
\le
C  B^{2\mu} A^2 \mathbb{E}^{(p)}\bigg[\sum_{v\in\cal A} \left(\frac{1+\ln B -\ln  \chi^{(p)}(v)}{1+\ln B-\ln M}\right)^2 \frac{\left(\chi^{(p)}(v)\right)^{2-2\mu}}{1 + \ln \chi^{(p)}(v)}
\overline{V}(\chi^{(p)}(v))\bigg].
\]

\noindent The many-to-one formula (\cref{prop: many to one}) then yields the upper-bound 
\[
%\bb E^{(p)} \bigg[ \sum_{u,w\in \cal G} V(u)V(w) \mathds{1}_{\{u\ne w\}}\bigg]
{\times(p)}
\le
C  B^{2\mu} A^2 \overline{V}(p) {\tt E}_p\bigg[\sum_{n=0}^{\tt T_{B}^+ \wedge \tt T_M-1}
  \left(\frac{1+\ln B -\ln  \tt S_n}{1+\ln B-\ln M}\right)^2 \frac{{\tt S}_n^{2-2\mu}}{1 + \ln {\tt S}_n}\bigg]. 
\]
Now choose $\delta\in(0,2-2\mu)$, and further bound the above display using the two inequalities: $1+\ln x\le c x^{{\delta/3}}$ for $x\ge 1$, as well as 
\[\frac{1}{1 + \ln {\tt S}_n} = \frac{1}{1 + \ln B}\bigg(1 + \frac{\ln(B/{\tt S}_n)}{1 + \ln {\tt S_n}}\bigg)\leq \frac{c}{1+\ln B - \ln M}\bigg(\frac{B}{{\tt S}_n}\bigg)^{{\delta/3}}. \]
We end up with
\[
%\bb E^{(p)} \bigg[ \sum_{u,w\in \cal G} V(u)V(w) \mathds{1}_{\{u\ne w\}}\bigg]
{\times(p)}
\le
\frac{C  B^{2\mu} A^2 \overline{V}(p)}{(1+\ln B-\ln M)^3} {\tt E}_p\bigg[\sum_{n=0}^{ \tt T_{B}^+ \wedge \tt T_M-1} \bigg(\frac{B}{{\tt S}_n}\bigg)^{\delta} {\tt S}_n^{2-2\mu}\bigg].
\]

\noindent We conclude by Corollary \ref{c:sum moments} (ii) that
\begin{equation} \label{eq:cross terms final 2}
%\bb E^{(p)} \bigg[ \sum_{u,w\in \cal G} V(u)V(w) \mathds{1}_{\{u\ne w\}}\bigg]
{\times(p)}
\le
C  B^{2} A^2 \frac{\overline{V}(p)}{(1+\ln B-\ln M)^3}.
\end{equation}
{This proves (iii).}
%The result follows by combining again \eqref{eq: square term final} and \eqref{eq:cross terms final 2}. 
\end{proof}

%---------------------------------------------------------------------------------------------------------------%
%
%				PROOF OF THE SCALING LIMIT RESULT
%
%---------------------------------------------------------------------------------------------------------------%
\section{Proof of the scaling limit (\cref{thm: main})}
\label{sec: proof main result}

We establish the scaling limit result for the volume. The strategy is to approximate the volume by its conditional expectation at some smaller generation $\ell$, and then relate it to either the additive martingale when $n\in (0,2)$ or to the derivative martingale when $n=2$.  

\bigskip

Recall that we defined in \cref{sec: good or bad} a set of branch points $\cal N$ and good points $\cal G$ associated to some parameters $(B,M,A,L)$. {Fix a $\gamma\in (0, \theta_{\alpha} - 2\mu)$ in \cref{prop: hitting S infinite} such that \cref{thm: second moment good volume} holds. } We will write from now $B$ as $bp$ for some $b\ge 1$. {Let $V_{\mathcal G}(u)$ stand for the volume of the maps associated to the 
good points $v\in \mathcal G$ in the lineage of $u$, namely 
\[
V_{\mathcal G}(u)
:=
\sum_{w\in\mathcal{U}: \; uw\in \mathcal{G}} V(uw).
\]
} 
For $\ell\ge 1$, let 
\begin{align*}
V^\ell &:= \sum_{|u|=\ell} V(u) && V_{\mathcal G}^\ell:= \sum_{|u|=\ell} V_{\mathcal G}(u) \\ 
\overline{V}^\ell &:= \bb E^{(p)}[V^\ell | \mathcal F_\ell]  &&  \overline{V}_{\mathcal G}^\ell:= \bb E^{(p)}[V_{\mathcal G}^\ell | \mathcal F_\ell]. 
\end{align*}

\noindent By the gasket decomposition,
\[
\overline{V}^\ell = \sum_{|u|=\ell} \overline{V}(\chi^{(p)}(u)), \, \qquad \overline{V}_{\mathcal G}^\ell=\sum_{|u|=\ell, \, u\in \mathcal A} \bb E^{(\chi^{(p)}(u))}[V_{\mathcal G}]
\]

\noindent where, in agreement with the notation in the proof of \cref{thm: second moment good volume}, $\mathcal A$ denotes the set of labels $u$ which have moderate increments \eqref{eq: size increments} and   $\chi^{(p)}(u_k) \in [M,bp]$ for all $1\le k\le |u|$.  In the following theorem, we say that $f(b,M,A,L,\ell,p)$ with values in a metric space $({\mathcal E},\mathrm{d})$ converges to some element $x_f \in {\mathcal E}$ as $p,\ell,L,A,M$ and $b$ go successively to $\infty$ if 
\[
\limsup_{b\to\infty}  \limsup_{M\to\infty} \limsup_{A\to\infty} \limsup_{L\to\infty} \limsup_{\ell\to\infty} \limsup_{p\to\infty} \;  \mathrm{d}(f(b,M,A,L,\ell,p),x_f)=0.
\]

\noindent We will use that convergence in distribution is metrizable, for example with the Prohorov metric. Note that under this metric, if some r.v. $(X_n,Y_n)$ satisfy  $X_n-Y_n\overset{(\mathrm{d})}{\longrightarrow}  0$, then the distance between $\bb P_{X_n}$ and $\bb P_{Y_n}$ goes to $0$, see \cite[Section 6]{billingsley2013convergence}. Theorem \ref{thm: main} is then a direct consequence of the following theorem.

\begin{thm}\label{thm:tuning parameters}
For simplicity, write $\overline{v}(p):= \overline{V}(p)$ if $n\in (0,2)$ and $\overline{v}(p)=\frac{\overline{V}(p)}{\ln(p)}$ if $n=2$.  The following convergences hold as $p,\ell,L,A,M$ and $b$ go successively to $\infty$:

 \begin{align}
& \frac{1}{\overline{v}(p)} (V - V_{\cal G}^{\ell})  \begin{cases} \overset{(L^1)}{\longrightarrow}  0 & \textrm{if } n\in (0,2), \\
\overset{(\mathrm{d})}{\longrightarrow}  0 & \textrm{if } n=2,
\end{cases}
\label{eq: tuning 1}\\
& \frac{1}{\overline{v}(p)} (V_{\cal G}^{\ell} - \overline{V}_{\cal G}^{\ell})  \overset{(\mathrm{d})}{\longrightarrow}  0, \label{eq: tuning 2}\\
& \frac{1}{\overline{v}(p)} \overline{V}_{\cal G}^{\ell}  \overset{(\mathrm{d})}{\longrightarrow}  \begin{cases} W_\infty & \textrm{if } n\in (0,2), \\
 D_\infty & \textrm{if } n=2.
\end{cases}  \label{eq: tuning 3}
 \end{align}
\end{thm}

{
\noindent The proof of the three scaling limits are sketched in Figures~\ref{fig: proof diagram 1}, \ref{fig: proof diagram 2} and \ref{fig: proof diagram 3}, where we outline how we combine all our estimates from Sections~\ref{sec: markov chain estimates} and \ref{sec: classification}.}

\begin{proof}
{We prove the three scaling limits one after another.}

{
\bigskip
\noindent \emph{\underline{Step 1}: Proof of \eqref{eq: tuning 1}.}
} 
Recall from \cref{sec: gasket estimates} the notation $\frak g_{B,M}$ standing for the gasket outside loops exiting $[M,B]$, {and that we have set $B=bp$ with $b\ge 1$}. Observe that 
\begin{align*}
    V & =V_{\cal N}+  \sum_{u \in \cal U} V(u)\mathds{1}_{\{T_{bp}^+(u)=|u|\} \cap \{T_M(u)>|u|\}}  + |\frak{g}_{{bp},M}| \\
    &= V_{\cal G}+ V_{\cal B} +  \sum_{u\in \cal U} V(u)\mathds{1}_{\{T_{bp}^+(u)=|u|\} \cap \{T_M(u)>|u|\}} + |\frak{g}_{bp,M}|.
\end{align*}
{We now distinguish according to the cases $n\in(0,2)$ or $n=2$.}

\bigskip
{
\emph{Let us start with the case $n\in(0,2)$.} We first take a look at $\frac{1}{\overline{V}(p)}(V-V_{\cal G})$ using the above decomposition and show that it goes to $0$ in $L^1$. The first term goes to $0$ in $L^1$ by \cref{thm: volume bad}. 
So does the last term by \cref{thm: gasket estimate} item (i), since $|\frak{g}_{bp,M}| \le |\frak{g}_{M}|$. Finally, the second one can be bounded by forgetting the second condition in the indicator and using \cref{prop: EB}:
\[
\frac{1}{\overline{V}(p)} \bb E^{(p)}\Big[\sum_{u\in \cal U} V(u)\mathds{1}_{\{T_{bp}^+(u)=|u|\} \cap \{T_M(u)>|u|\}}\Big]
\leq
{\tt P}_p({\tt T}_{bp}^+<\infty).
\]
We conclude by \cref{prop: hitting S infinite} that this also goes to $0$. 
Hence $\frac{1}{\overline{V}(p)}(V-V_{\cal G})$ goes to $0$ in $L^1$.
}

Moreover,  $V_{\cal G}-V_{\cal G}^\ell \le \sum_{|u|<\ell} V(u)\mathds{1}_{\{u\in \cal N\}}$. The expectation of the latter is, by the many-to-one formula in \eqref{eq: many-to-one},
\begin{equation}\label{proof tuning expectation N ell}
\bb E^{(p)}\bigg[\sum_{|u|<\ell} V(u)\mathds{1}_{\{u\in \cal N\}} \bigg]=\overline{V}(p){\tt P}_p({\tt T}_M<\min(\ell,{\tt T}_{bp}^+)) \le \overline{V}(p){\tt P}_p({\tt T}_M<\ell).
\end{equation}

\noindent We conclude by \cref{prop: scaling limits S} that $\frac{1}{\overline{V}(p)}(V - V_{\cal G}^{\ell})$ converges to $0$ in $L^1$. 

\bigskip
{\emph{The case $n=2$}} is similar, except that we reason under ${\mathcal E}(B)={\mathcal E}(bp)$ (recall the definition of this event in \eqref{def:EB}). {Indeed,} to prove convergence in distribution, we are allowed to restrict to this event by {the second claim of} \cref{prop: EB}. On ${\mathcal E}(bp)$, we have $V=V_{\cal G}+ V_{\cal B}+|\frak{g}_{bp,M}|$ so that $\frac{\ln(p)}{\overline{V}(p)}(V-V_{\cal G})$ converges to $0$ in distribution by \cref{thm: volume bad} and \cref{thm: gasket estimate} item (ii). It remains to control the expectation of $V_{\cal G}-V_{\cal G}^\ell\le \sum_{|u|<\ell} V(u)\mathds{1}_{\{u\in \cal N\}}$. By \cref{coupling 2}, for $p$ large enough,
\[
{\tt P}_p({\tt T_M<\ell}) 
\le c p^{-\frac{1}{32}} + {\tt P}_p\Big(\{\tt T_M<\ell\} \cap \Big\{\forall 0\le k\le \sigma_{\sqrt{p}}, \; \frac{1}{2} \le \frac{\tt S_k}{Y_k} \le 2 \Big\}\Big)
\le c p^{-\frac{1}{32}} + {\tt P}_p(\sigma_{\sqrt{p}}<\ell).
\]

\noindent Moreover, by a union bound, ${\tt P}_p(\sigma_{\sqrt{p}}<\ell) \le  \ell \, \bb P(\xi < p^{-\frac{1}{2\ell}}) = \frac{2}{\pi} \ell  \arctan \big(p^{-\frac{1}{4\ell}}\big)$, hence ${\tt P}_p({\tt T_M<\ell})$ decays polynomially fast as $p\to\infty$.
\cref{proof tuning expectation N ell} then implies that

\begin{equation}\label{proof tuning N ell}
\lim_{p\to\infty} \frac{\ln(p)}{\overline{V}(p)}\bb E^{(p)}[ V_{\cal G}-V_{\cal G}^\ell]=\lim_{p\to\infty} \frac{\ln(p)}{\overline{V}(p)}\bb E^{(p)}\bigg[\sum_{|u|<\ell} V(u)\mathds{1}_{\{u\in \cal N\}} \bigg]=0.
\end{equation}

\noindent It implies the convergence in distribution of $\frac{1}{\overline{v}(p)}(V - V_{\cal G}^{\ell})$ to $0$.

\begin{figure}[t]
\bigskip
\begin{center}
\includegraphics[scale=0.8]{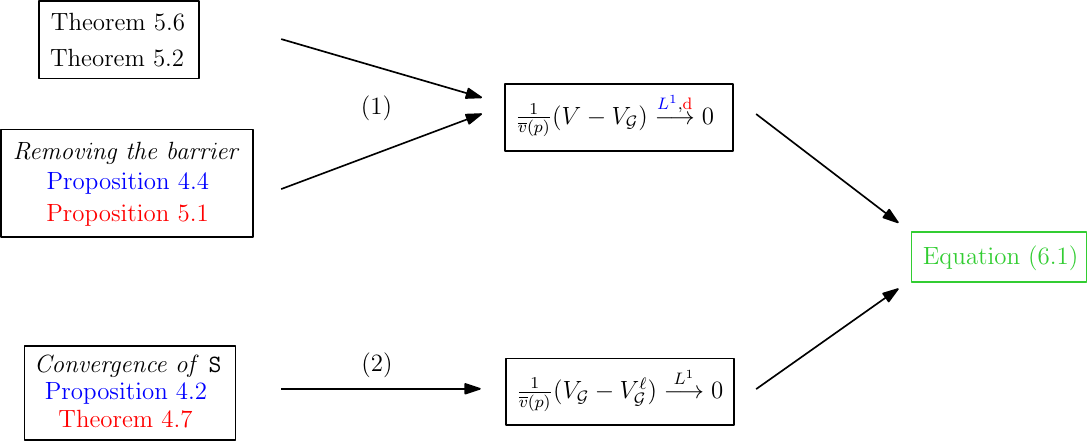}
\end{center}
\caption{
{A diagram for the proof of \eqref{eq: tuning 1}. Blue and red colours correspond to the cases $n\in(0,2)$ and $n=2$ respectively. The statement easily follows from the two boxes in the middle. 
(1) To prove the top one, we first notice that the difference $V-V_{\cal G}$ accounts for the volume of the bad region, the volume of the gasket $\mathfrak{g}_{bp,M}$ and that inside loops that hit the upper barrier $B=bp$. We handle the first two using \cref{thm: volume bad} and \cref{thm: gasket estimate}, respectively. For the latter one, we need to remove the barrier. When $n\in(0,2)$, the volume can be controlled using \cref{prop: hitting S infinite}. However, for $n=2$, because of the logarithmic asymptotics in \cref{prop: hitting time S}, we need to restrict to the nice event $\mathcal{E}(bp)$ where no loop reaches the barrier (\cref{prop: EB}), which is why we only get a convergence in distribution. 
(2) For the bottom box, we first bound the difference $V_{\cal G}-V_{\cal G}^\ell$ by the volume carried by branch points at generation smaller than $\ell$. This translates into hitting time estimates for $\tt S$ after applying the many-to-one formula, which are then handled using the convergence of $\tt S$. Note that for $n=2$ we need refined estimates, coming from our coupling in \cref{sec:hitting n=2}.}
}
\label{fig: proof diagram 1}
\end{figure}

{
\bigskip
\noindent \emph{\underline{Step 2}: Proof of \eqref{eq: tuning 2}.}
} 
It suffices to show that 
\begin{equation}\label{proof tunin 2 ne 2}
 \frac{1}{\overline{v}(p)} \bb E^{(p)}[|V_{\mathcal G}^\ell-\overline{V}^{\ell}_{\mathcal G}|\land \overline{v}(p)],
\end{equation}

\noindent goes to $0$.  By the Cauchy--Schwarz inequality, $\bb E^{(p)}[|V^\ell_{\mathcal G}-\overline{V}^{\ell}_{\mathcal G}|\land \overline{v}(p) | \mathcal{F}_{\ell}] \le \overline{v}(p)\land \bb E^{(p)}[|V^\ell_{\mathcal G}-\overline{V}^{\ell}_{\mathcal G}|^2| \mathcal{F}_{\ell}]^{1/2}$. Hence
\begin{equation}\label{eq: proof V^ell_G minus mean}
\bb E^{(p)}\big[|V^\ell_{\mathcal G}-\overline{V}^{\ell}_{\mathcal G}|\land \overline{v}(p)\big]
\le
\bb E^{(p)} \Big[ \overline{v}(p)\land \bb E^{(p)}[|V^\ell_{\mathcal G}-\overline{V}^{\ell}_{\mathcal G}|^2| \mathcal{F}_{\ell}]^{1/2} \Big].
\end{equation}

\begin{figure}
%\bigskip
\begin{center}
\includegraphics[scale=0.8]{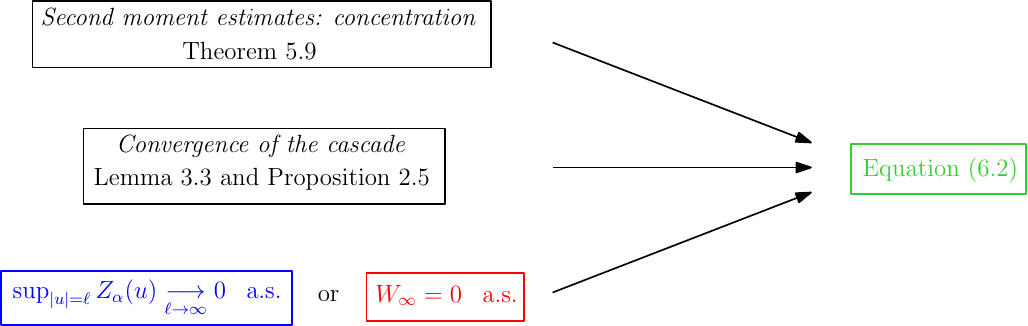}
\end{center}
\caption{
{A diagram for the proof of \eqref{eq: tuning 2}. To prove \eqref{eq: tuning 2}, we need to show that the good volume $V_{\cal G}$ concentrates around its mean. Our second moment estimates in \cref{thm: second moment good volume} allows us to bound the fluctuations around the mean. We then send $p\to \infty$ using \cref{l: passing to infinity} (and \cref{prop: CCM convergence fixed gen}) so that we are left with the continuous multiplicative cascade. The fact that $\sup_{|u|=\ell} Z_{\alpha}(u) \to 0$ ($n\in(0,2)$) or $W_\infty=0$ ($n=2$) finally shows that the upper bound goes to $0$.}
}
\label{fig: proof diagram 2}
\end{figure}

\noindent {Recall the set $\cal A$ in \cref{def:calA} listing the properties of any ancestor of two distinct good loops.} By independence of the submaps in the gasket decomposition (\cref{prop: spatial markov gasket}), 
\[
    \mathbb{E}^{(p)}[|V^\ell_{\mathcal G}-\overline{V}^{\ell}_{\mathcal G}|^2 | \mathcal{F}_{\ell}] 
 =\sum_{|u| = \ell, \, q = \chi^{(p)}(u)} \mathds{1}_{\{u\in\mathcal{A}\}} \mathbb{E}^{(q)}\Big[ \big(V_{\mathcal{G}} - {\bb E}^{(q)}[V_{\mathcal G}] \big)^2\Big] 
  \le 
  \sum_{|u| = \ell, \, q=\chi^{(p)}(u)} \mathds{1}_{\{u\in\cal A\}} \mathbb{E}^{(q)}[V_{\mathcal{G}}^2].
\]
{Given that our estimates in \cref{thm: second moment good volume} depend on the different regimes for $(n;g,h)$, we now sub-divide the proof.}
 
\medskip
{\emph{Let us first discuss the case $n\in (0,2)$.}} By \cref{thm: second moment good volume}, we have for all $q\in [M,bp]$ that  $ \mathbb{E}^{(q)}[V_{\mathcal{G}}^2]\le   \overline{V}(q) ( C A^2 b^{\theta_\alpha-\gamma} p^{\theta_\alpha} (q/p)^\gamma + L^2)$.  Hence the last display is smaller than 
\[
 \sup_{|u|=\ell,\, q = \chi^{(p)}(u)} (C_{A,b}p^{\theta_\alpha}(q/p)^\gamma  + L^2)\sum_{|u| = \ell, \, q = \chi^{(p)}(u)} \overline{V}(q),
\]

\noindent where $C_{A,b} = C A^2 b^{\theta_\alpha-\gamma}$.
{Recall from \eqref{eq: def martingale W_ell} the definition of the additive martingale $W_{\ell}$. In view of \cref{prop: CCM convergence fixed gen} and \cref{l: passing to infinity}, for some other constant $C_{A,b}'$ depending only on $A$ and $b$, we have
\begin{equation} \label{eq:cvg sup ref}
    \frac{1}{\overline{V}(p)^2}\sup_{|u|=\ell,\, q = \chi^{(p)}(u)} (C_{A,b}p^{\theta_\alpha}(q/p)^\gamma  + L^2)\sum_{|u| = \ell, \, q = \chi^{(p)}(u)} \overline{V}(q) \xrightarrow[p\to\infty]{(\mathrm{d})} C_{A,b}' W_\ell \sup_{|u|=\ell}   Z_{\alpha}(u)^\gamma. 
\end{equation}
Recalling \eqref{eq: extremal asympt}, we see that $\sup_{|u|=\ell} Z_{\alpha}(u)$ goes to $0$ almost surely when $\ell\to \infty$, and therefore so does the right-hand side in the above display.} 
{We stress once more that $p$ is sent to infinity before all other parameters.} 
Hence \eqref{proof tunin 2 ne 2} goes to $0$ by dominated convergence. 

\medskip
{\emph{The case $n=2$}} follows analogous lines. When $g=\frac{h}{2}$, \cref{thm: second moment good volume} (ii) gives that $\mathbb{E}^{(q)}[V_{\mathcal{G}}^2]\le  \overline{V}(q)(C_{A,b}  p^2 (\ln p)^{-2} + L^2)$ for $p\ge M^2$ {and all $q\in [1,B]$}. When $g<\frac{h}{2}$, \cref{thm: second moment good volume} (iii) gives that $\mathbb{E}^{(q)}[V_{\mathcal{G}}^2]\le  \overline{V}(q)(C_{A,b}  p^2 (\ln p)^{-3} + L^2)$ for $p\ge M^2$ {and all $q\in [1,B]$}. By the asymptotics \eqref{eq: mean volume 2}, in both cases, {there are constants $C''_{A, b}$ depending only on $(A, b)$ and $C_L$ depending only on $L$ such that} for $p\geq M^2$, 

\begin{equation} \label{eq: ref log corrections}
\frac{1}{\overline{v}(p)^2}\sum_{|u| = \ell, \, q=\chi^{(p)}(u)} \mathds{1}_{\{u\in\cal A\}} \mathbb{E}^{(q)}[V_{\mathcal{G}}^2]\leq \sum_{|u| = \ell, \, q=\chi^{(p)}(u)}\frac{\overline{V}(q)}{\overline{V}(p)}\bigg({C''_{A, b}} + {C_L}\frac{{(\ln p)^3}}{p^2}\bigg). 
\end{equation}

\noindent \cref{l: passing to infinity} {and \eqref{eq: proof V^ell_G minus mean}} yield that {there exists a constant $C_{A,b}$ depending only on $(A,b)$ such that} 
\[
\limsup_{p\to\infty} \frac{1}{\overline{v}(p)} \bb E^{(p)}[|V_{\mathcal G}^\ell-\overline{V}^{\ell}_{\mathcal G}|\land \overline{v}(p)] \le {C_{A,b}} \bb E\Big[ 1\land W_\ell^{1/2} \Big],
\]

\noindent which goes to $0$ when $\ell\to\infty$ {by dominated convergence,} since $W_\ell\to 0$ almost surely (see  \cref{sec: intro prev known}). 
{This again says that \eqref{proof tunin 2 ne 2} goes to $0$.}

\begin{figure}[t]
%\bigskip
\begin{center}
\includegraphics[scale=0.8]{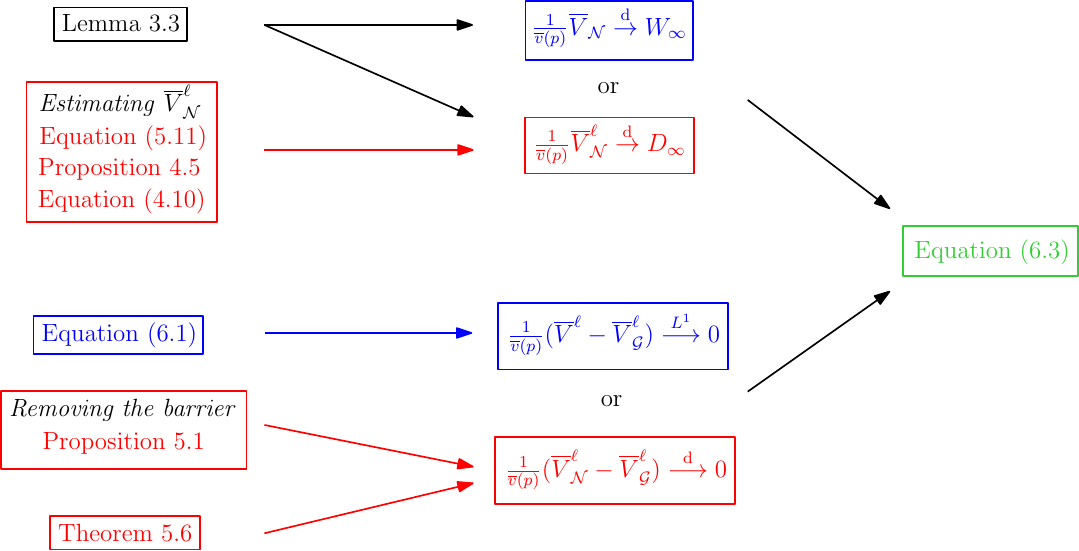}
\end{center}
\caption{
{A diagram for the proof of \eqref{eq: tuning 3}. The result easily follows provided we show the scaling limits in the middle. In the case when $n\in (0,2)$, depicted in blue, the proof is almost immediate: the top box comes for free using \cref{l: passing to infinity}, while the bottom one is a consequence of the $L^1$ convergence in \eqref{eq: tuning 1}. Because $W_\infty=0$ almost surely, this approach is not precise enough when $n=2$ (red). We introduce the volume $V_{\cal N}^{\ell}$ of the branch points \emph{inside} the maps at generation $\ell$. For the convergence towards the derivative martingale (top box), we compute its mean $\overline{V}_{\cal N}^{\ell}$ using \eqref{eq: expected volume branch points}. This involves the hitting time probability that we have estimated in \cref{prop: hitting time S}, up to the renewal asymptotics \eqref{def:c0}. From the latter, a logarithmic correction emerges, and we end up with the derivative martingale. To obtain the red box at the bottom, we first need to restrict to the nice event $\mathcal{E}(bp)$ where no loop reaches the upper barrier $B=bp$ (\cref{prop: EB}), which explains why we only have a convergence in distribution in this case. On this event, the difference $V_{\cal N}^\ell-V_{\cal G}^\ell$ is essentially given by the bad volume, that scales to $0$ in $L^1$ by \cref{thm: volume bad}.}
}
\label{fig: proof diagram 3}
\end{figure}

{
\bigskip
\noindent \emph{\underline{Step 3}: Proof of \eqref{eq: tuning 3}.}
} 
%Finally we prove \eqref{eq: tuning 3}. 
{We need to split the two cases $n\in (0,2)$ and $n=2$ right away.}

\medskip
{\emph{In the case $n\in (0,2)$}}, by \cref{l: passing to infinity}, the limit in distribution of $\frac{\overline{V}^\ell}{\overline{V}(p)}$ as $p\to\infty$ then $\ell\to\infty$ is $W_\infty$. So we want to show  that $\frac{1}{\overline{V}(p)}(\overline{V}^\ell-\overline{V}_{\cal G}^\ell)$ goes to $0$. But its expectation is
\[
\frac{1}{\overline{V}(p)}{\bb E}^{(p)}[V^\ell-V_{\cal G}^\ell]\le \frac{1}{\overline{V}(p)}{\bb E}^{(p)}[V-V_{\cal G}^\ell],
\]

\noindent which goes to $0$ by \eqref{eq: tuning 1}. 

\medskip
{\emph{We now consider the case $n=2$.}} 
For $u\in \cal U$, let ${\frak m}_u$ denote the (loop-decorated) map inside the loop labelled by $u$. We may define as in \cref{sec: good or bad} (and relative to the same constants $B$ and $M$) the set of branch points $\cal N({\frak m}_u) \subset u\cdot\cal U$ for the map $\frak m_u$: beware that $\cal N({\frak m}_u)$ may include labels which are not in ${\cal N}={\cal N}(\frak m_{\varnothing})$, for example in the case $T_M(u)<\ell$. Introduce
\[
V_{\cal N}({\frak m}_u) := \sum_{v\in \cal N(\frak m_u)} V(v),
\]
and $V_{\cal N}^\ell:=\sum_{|u|=\ell} V_{\cal N}({\frak m}_u)$. Let $\overline{V}_{\cal N}^\ell:= {\bb E}^{(p)}[V_{\cal N}^\ell | {\mathcal F}_\ell]$. We have by the gasket decomposition and \eqref{eq: expected volume branch points},
\begin{equation} \label{eq: scaled V_calN^l}
\frac{\ln(p)}{\overline{V}(p)}\overline{V}_{\cal N}^\ell = \frac{\ln(p)}{\overline{V}(p)} \sum_{|u| = \ell} \overline{V}(\chi^{(p)}(u)) {\tt P}_{\chi^{(p)}(u)}({\tt T}_M<{\tt T}_{bp}^+, \tt S_{\tt T_M} \ne 0).
\end{equation}

\noindent By \eqref{eq: expectation gasket P(S_1=0)} and \cref{thm: gasket estimate} (ii), {for all $q\ge 1$,}
\[
\overline{V}(q) {\tt P}_{q}({\tt T}_M<{\tt T}_{bp}^+, \tt S_{\tt T_M} = 0) = \bb E^{(q)}[|{\frak g}_{bp,M}|]\le C M^{-\beta_\alpha} \overline{v}_{bp,M}(q) {\mathds{1}_{\{q\le bp\}}}.
\]

\noindent Using this inequality in $\sum_{|u| = \ell} \overline{V}(\chi^{(p)}(u)) {\tt P}_{\chi^{(p)}(u)}({\tt T}_M<{\tt T}_{bp}^+, \tt S_{\tt T_M} = 0)$ and applying \cref{l: passing to infinity} with $h_p(x)=\ln(p){\frac{\overline{v}_{bp,M}(px)}{\overline{V}(px)}\mathds{1}_{\{x\le b\}}}$ %\eqref{passing to infinity 32 bis} 
shows that the limit of 
\[
\frac{\ln(p)}{\overline{V}(p)} \sum_{|u| = \ell} \overline{V}(\chi^{(p)}(u)) {\tt P}_{\chi^{(p)}(u)}({\tt T}_M<{\tt T}_{bp}^+, \tt S_{\tt T_M} = 0),
\]

\noindent is $0$. This proves that we can get rid of the event $\{\tt S_{\tt T_M}\ne 0\}$ in \eqref{eq: scaled V_calN^l}.
% by \cref{thm: gasket estimate} item (ii). 
Recall that $R$ denotes the renewal function defined in \eqref{def: renewal function R}. Using, in view of \cref{prop: hitting time S}, the convergence stated in \cref{l: passing to infinity}  with $h_p(x)= \ln(p) {\tt P}_{px}({\tt T}_M<{\tt T}_{bp}^+)$ we thus get, as $p\to \infty$,
\[
\frac{\ln(p)}{\overline{V}(p)}\overline{V}_{\cal N}^\ell 
\overset{(\mathrm{d})}{\longrightarrow}
{\frac{1}{c_0}}\sum_{|u|=\ell} R\bigg(\ln \frac{b}{Z_\alpha(u)}\bigg) Z_\alpha(u)^2.
\]
Since $\sup_{|u|=\ell} Z_\alpha(u) \to 0$ as $\ell \to 0$, we can then use the asymptotics \eqref{def:c0} and the fact that $W_\infty = 0$ to conclude that $\frac{\ln(p)}{\overline{V}(p)}\overline{V}_{\cal N}^\ell$ goes to {$D_{\infty}$} upon letting first $p\to\infty$ and then $\ell\to\infty$.
Finally, it remains to prove that $\frac{\ln(p)}{\overline{V}(p)}(\overline{V}_{\cal N}^\ell-\overline{V}_{\cal G}^\ell)$ goes to $0$ {in distribution}. For this, \cref{prop: EB} enables us to restrict to the event $\cal E(bp)$. Now on the event $\cal E(bp)$, we notice that a node $v \in \cal N(\frak m_u)$ for some $u\in \cal U$ with $|u|=\ell$ is either in $\cal N = \cal N(\frak m_{\varnothing})$ or the descendant of some branch point $w\in \cal N$ with $|w|< \ell$. Thus
\[
\bb E^{(p)}\big[\mathds{1}_{{\cal E}(bp)} (\overline{V}_{\cal N}^\ell-\overline{V}_{\cal G}^\ell)\big]
\le 
\bb E^{(p)}\bigg[V_{\cal B}+\sum_{|w|<\ell} \mathds{1}_{\{ w \in \cal N\}} V(w)\bigg],
\]
and we conclude by \cref{thm: volume bad} and \eqref{proof tuning N ell}. 
\end{proof}

\appendix

\section{Partition function of the non-generic critical $O(n)$ loop model}
\label{s:partition function O(2)}

We review results on the rigid $O(n)$ loop model on quadrangulations for $n\in (0,2)$, {referring to} \cite{borot2011recursive} and the Appendix of \cite{budd2018peeling} for details. 

\bigskip

Let $n\in (0,2)$ and $b:=\frac1\pi \arccos \frac{n}2$. Recall that the loop--$O(n)$ model depends on some parameters $(g,h)$ which satisfy \eqref{non-generic line} in the non-generic critical regime. Recall that $F_p(n;g,h)$ in \eqref{eq: weights O(n)} denotes the partition function of the model, and satisfies \eqref{eq: asymptotics partition function} {in that regime}. The resolvent function  is defined by
\begin{equation}{\label{resolvent}}
W(\xi;n) := \sum_{p\geq 0} \frac{F_p(n;g,h)}{\xi^{2p+1}}, \qquad |\xi|> \gamma:= \frac1{\sqrt{h}}.
\end{equation}

\noindent It has an analytic continuation for $\xi \notin [-\gamma,\gamma]$ and has the explicit expression $W = W_{part} + W_{hom}$ where
\[
W_{part}(\xi;n) = 
\frac{2(\xi-g\xi^3) - n(\frac{1}{h^2\xi^3} - \frac{g}{h^4\xi^5})}{4-n^2} + \frac{n}{(2+n)\xi},
\]
and
\[
W_{hom}(\xi;n) = \bigg(B(\xi) - \frac{\gamma^2}{\xi^2} B\bigg(\frac{\gamma^2}{\xi}\bigg)\bigg)\bigg(\frac{\xi-\gamma}{\xi+\gamma}\bigg)^b - \bigg(B(-\xi) - \frac{\gamma^2}{\xi^2} B\bigg(-\frac{\gamma^2}{\xi}\bigg)\bigg)\bigg(\frac{\xi+\gamma}{\xi-\gamma}\bigg)^b,
\]

\noindent with \[
B(\xi) = \frac{g}{4-n^2}\bigg(\xi^3 + 2b\gamma\xi^2 + 2b^2\gamma^2\xi + \frac{2}{3}(b+2b^3)\gamma^3\bigg) - \frac{1}{4-n^2}(\xi+2b\gamma).
\]

We introduced the gasket decomposition in \cref{sec: gasket decomposition}, where it was recalled that the gasket of a loop--$O(n)$ decorated quadrangulation  is a Boltzmann map associated to the weight sequence  $\bf{\hat{g}} = (\hat{g}_k,k\ge 1)$  given by \cref{eq: fixed point equation}. 
This weight sequence is admissible for the pointed Boltzmann planar map model, so that it gives rise to a probability distribution on the space of maps. It is equivalent to \cref{eq:admissible} having a positive solution.  It is also critical meaning that the probability distribution $\mu_{JS}$ defined in \eqref{def:muJS} has mean $1$, \textit{i.e.} \
\begin{equation}\label{critical}
\sum_{k=1}^\infty k \dbinom{2k-1}{k-1} \hat{g}_k   Z_{\hat{\bf g}}^{k-1}=1. 
\end{equation}

\noindent In general, the following result is a criterion for a weight sequence to be admissible and critical.

\begin{prop}[\cite{budd2016peeling}, {Proposition 3}]\label{p: admissible critical}
A weight sequence $({\hat{g}}_k,\,k\ge 1)$ is admissible {and critical} for a pointed Boltzmann planar map model iff there exists a probability law $\nu$ on $\mathbb{Z}$ such that ${\hat{g}}_k = (\frac{\nu(-1)}{2})^{k-1}\nu(k-1)$, and $h^{\uparrow}$ is $\nu$--harmonic on $\mathbb{Z}_{>0}$, where $h^{\uparrow}(k) = 2k\, 4^{-k}\dbinom{2k}{k}$. 
\end{prop}

\noindent In our case,  $h^{\uparrow}$ is $\nu(\, \cdot\, ;n)$--harmonic on $\mathbb{Z}_{>0}$, for the probability measure $\nu(\, \cdot\, ;n)$ defined by, see  Equation (16)  in \cite{budd2016peeling},
\begin{equation}\label{nu n}
\nu(k;n) = \begin{cases}
{\hat{g}}_{k+1}\gamma^{2k},\quad & k\geq 0, \\
2F_{-k-1}(n;g,h)\gamma^{2k}, \quad & k\leq -1.
\end{cases}
\end{equation}

\noindent The spectral density function $\rho(u;n)$, $u \in[-\gamma, \gamma]$, is given by
\begin{align}\label{rho n}
\rho(u;n) &= \frac{1}{2\pi i}(W(u-i0;n)-W(u+i0;n))  \\
&= -\frac{\sin\pi b}{\pi} \left[\bigg(B(u)-\frac{\gamma^2}{u^2} B\bigg(\frac{\gamma^2}{u}\bigg)\bigg) \bigg(\frac{\gamma-u}{\gamma+u}\bigg)^b + \bigg(B(-u)-\frac{\gamma^2}{u^2} B\bigg(-\frac{\gamma^2}{u}\bigg)\bigg) \bigg(\frac{\gamma+u}{\gamma-u}\bigg)^b\right]\nonumber.
\end{align}

\noindent {As seen from \cite[Equation (6.1)]{borot2011recursive},} we have for any $|r|<\frac{1}{4h}$,
$$
\sum^{\infty}_{k=1}  k \dbinom{2k}{k} F_k(n;g,h) r^{k-1} =  \int_{-\gamma}^\gamma \rho(u;n) \frac{u^2}{\left(1-4r u^2\right)^{3/2}}  \mathrm{d} u.
$$

\noindent Take the limit $r\to \frac{1}{4h}$ to see that equation \eqref{critical} now reads
\begin{equation}\label{critical n}
    \frac{nh^2}{2} \int_{-\gamma}^\gamma \rho(u) \frac{u^2}{\left(1-h u^2\right)^{3/2}} \mathrm{d} u=1-\frac32 h^3g.
\end{equation}

\section{Construction of a non-generic critical $O(2)$ loop model}

\label{s:construction O(2)}
We suppose that the parameters $(g,h)$ satisfy \eqref{n=2 line}. The parameter $h$ then satisfies
$\frac{4}{3\pi^2} \leq h \leq \frac{2}{\pi^2}$, and we can check that for any such $h$, for any $n<2$, the parameter $g_n$ obtained from  \eqref{non-generic line}  exists. Therefore, we are allowed to apply the results of the previous section to any such $h$. From now on, we fix $h\in [\frac{4}{3\pi^2},\frac{2}{\pi^2}]$, $\gamma=\frac{1}{\sqrt{h}}$ and $g$ the solution of \eqref{n=2 line}. Beware that the parameter $g$, call it $g_n$, of the previous section depends on $n$ and satisfies $\lim_{n\to 2} g_n=g$.  The next proposition follows from computations since $W(\xi;n)$ and $\rho(u;n)$ have explicit expressions.

\begin{prop}\label{p:conv_resolvent}
 Uniformly over compact sets of $\mathbb{C}\backslash[-\gamma,\gamma]$,  $W(\xi; n)$  converges as $n\to 2$ to 
\begin{align*}
W(\xi;2):=& \frac1{4\pi^2} \bigg(\ln \bigg(\frac{\xi-\gamma}{\xi+\gamma}\bigg)\bigg)^2    \bigg(g\xi^3 - \frac{g}{h^4\xi^5} - \xi+\frac{1}{h^2\xi^3} \bigg) \\
&+
\frac{\gamma}{\pi^2} \ln \bigg(\frac{\xi-\gamma}{\xi+\gamma}\bigg) \bigg( g \bigg(\xi^2 - \frac{1}{h^3 \xi^4} \bigg)+ \Big(\frac{1}{3h} -1\Big)\bigg(1-\frac{\gamma^2}{\xi^2}\bigg)  \bigg)\\
& + \frac{g}{\pi^2 h} \left(\xi- \frac{1}{h^2\xi^3}\right)
+ \frac{1}{4}\left( \frac1{h^2\xi^3} - \frac{g}{h^4\xi^5}\right) +\frac{1}{2\xi}.
\end{align*}
Similarly,  $\rho(u;n)$ converges pointwise on $[-\gamma,\gamma]$ to some continuous function $\rho(u;2)$ and there exists a constant $c=c(\gamma)>0$ such that $\sup_{u\in [-\gamma,\gamma]} \frac{|\rho(u,n)|}{(\gamma^2-u^2)}\le c$ for all $n$ close enough to $2$.
\end{prop}

\bigskip

\noindent Taking the limit $n\to 2$ in \cref{critical n}, we obtain by the dominated convergence theorem,
\begin{equation}
  h^2 \int_{-\gamma}^\gamma \rho(u;2) \left(1-h u^2\right)^{-3/2} u^2 \mathrm{d} u  =1-\frac32 h^3g. \label{critical n=2}
\end{equation}

\noindent Let $(F_k(2;g,h),\,k\ge 1)$ be the coefficients of the Taylor expansion of $W(\, \cdot\, ;2)$ at $\infty$, \textit{i.e.}
\begin{equation}\label{n=2 Fk}
  W(\xi;2) = \sum_{k\ge 0} \frac{F_k(2;g,h)}{\xi^{2k+1}},\, |\xi|>\gamma.
\end{equation}

\noindent  By Proposition \ref{p:conv_resolvent} and the Cauchy integral formula, $F_k(2;g,h)$ is the limit of $F_k(n;g,h)$, which shows that $F_k(2;g,h)\ge 0$ for all $k\ge 0$. The expression of $F_k(2;g,h)$ can be computed explicitly
\[F_1 = \frac{13}{30}\gamma^2 + \Big(\frac{1}{4} - \frac{43}{15}\frac{1}{\pi^2}\Big)\gamma^4, \]
\[F_2 = \Big(\frac{433}{90\pi^2}-\frac{3}{8}\Big)\gamma^6 + \Big(\frac{\pi^2}{8}-\frac{10}{9}\Big)\gamma^4. \]

\noindent For $F_k$, $k \geq 3$, we have
\begin{eqnarray*}
4\pi^2 F_k &=& \bigg(\frac{4}{2k+4}g\gamma^{2k+4}c_{2k+4} - \frac{8}{2k+3}g\gamma^{2k+4}-\frac{1}{2k+1}\frac{8}{3}g\gamma^{2k+4} \\
& &
-\frac{4}{2k-4}g\gamma^{2k+4}c_{2k-4} + \frac{8}{2k-3}g\gamma^{2k-4}+\frac{1}{2k-1}\frac{8}{3}g\gamma^{2k+4}\bigg)\\
& &+\bigg(-\frac{4}{2k+2}\gamma^{2k+2}c_{2k+2} + \frac{4}{2k-2}\gamma^{2k+2}c_{2k-2} + \frac{8}{2k+1}\gamma^{2k+2}-\frac{8}{2k-1}\gamma^{2k+2}\bigg).
\end{eqnarray*}

\noindent Here $c_{2m} = 2(1 + \frac{1}{3} + \frac{1}{5} +\cdots+ \frac{1}{2m-1}) \sim \ln m$. We can check that \eqref{asymptotics} holds. Define
\begin{equation}\label{weightseq}
{\hat{g}}_k := g\delta_{2,k} + 2h^{2k}F_k(2;g,h),\, k\ge 1,
\end{equation}

\noindent and the measure $\nu(\, \cdot\, ;2)$ on $\mathbb{Z}$ by
\begin{equation}\label{nu n=2}
\nu(k;2) = \begin{cases}
{\hat{g}}_{k+1}\gamma^{2k},\quad & k\geq 0, \\
2F_{-k-1}(2;g,h)\gamma^{2k}, \quad & k\leq -1.
\end{cases}
\end{equation}

\noindent {By monotone convergence, one sees that $W(\gamma;2)$ is well-defined as a limit along real values of $\xi$. Moreover} we observe that $\sum_{k\in \mathbb{Z}} \nu(k;2) = g\gamma^2-2\gamma^{-2}+\frac{4}{\gamma}W(\gamma;2)=1$ from the expression of $W(\cdot;2)$ in Proposition \ref{p:conv_resolvent}, hence $\nu(\, \cdot\, ;2)$ is a probability measure. Recall the notation $h^{\uparrow}$ in \cref{p: admissible critical}.

\begin{lem}
For $\ell>0$, we have $\sum^{\infty}_{k=-\ell+1} h^{\uparrow}(\ell+k)\nu(k;2) = h^{\uparrow}(\ell)$.
\end{lem}

\begin{proof}
We know that for all $n<2$,
\begin{equation}\label{harmonic n}
\sum^{\infty}_{k=-\ell+1} h^{\uparrow}(\ell+k)\nu(k; n) = h^{\uparrow}(\ell). 
\end{equation}
\noindent We want to show it also holds for $n=2$. The case $\ell=1$ is  \eqref{critical n=2}.  Since $\lim_{n\to 2} \nu(k;n)=\nu(k;2)$, Scheff\'e's lemma implies that
$$
\sum_{k=0}^\infty h^{\uparrow}(1+k) |\nu(k;n)-\nu(k;2)|\to 0,\quad n\to 2.
$$

\noindent Hence, using that $h^{\uparrow}(\ell+k)\le 4^\ell h^{\uparrow}(k)$ {for $k\ge 1$} we are allowed to take the limit $n\to 2$ again in \eqref{harmonic n} for any $\ell\ge 1$.
\end{proof}

\bigskip
\noindent By Proposition \ref{p: admissible critical}, the weight sequence $({\hat{g}}_k,\, k\ge 1)$ is admissible and critical.  Theorem 1 in \cite{budd2018peeling} implies that $(g, h)$ defines a {valid} loop--$O(2)$ model on quadrangulations as in the setting of \cite{borot2011recursive}. The asymptotics \eqref{asymptotics} show that it is  non-generic critical. The expected volume $\overline{V}(p)$ of the $O(2)$ planar map with half-perimeter $p$ is 
\begin{equation}\label{expected volume Budd}
\overline{V}(p) = \frac{\gamma^{2p}}{F_p},
\end{equation}
\noindent as given by {\cite[Equation (42)]{budd2018peeling}}. In view of \eqref{asymptotics}, as $p\to \infty$, 
\begin{equation}\label{expected volume O(2)}
\overline{V}(p) \sim \Lambda \begin{cases}
  p^2, & \text{if } g=\frac{h}{2}, \\
 \frac{p^2}{\ln(p)}, & \text{if }g< \frac{h}{2}.
\end{cases}
\end{equation}

\noindent For the gasket of loop--$O(2)$ quadrangulations, the pointed partition function is given in \cite[Theorem 6]{curien2019peeling} by 
\[\dbinom{2k-1}{k-1} Z_{\hat{g}}^{k}\sim Ch^{-k}k^{-\frac12}, \quad \text{as } k\to\infty.\]

\noindent Since the expected volume is the ratio of the pointed partition function over the non-pointed partition function $F_p{(2;g,h)}$, we obtain the asymptotics \eqref{eq: mean volume boltzmann n=2}.

{Finally, we discuss properties of the measure} $\mu_{\mathrm{JS}}$ introduced in (\ref{def:muJS}). Since $Z_{\hat{g}} = \frac{1}{4h}$,  $\mu_{\mathrm{JS}}(k) = \dbinom{2k}{k}4^{-k+1}h^{k+1}F_k{(2;g,h)}$. Note that $\sqrt{k} \dbinom{2k}{k} 4^{-k} \to \frac{1}{\sqrt{\pi}}$ and that
\begin{multline*}
\sqrt{n+1}{\dbinom{2n+2}{n+1}} 4^{-(n+1)} - \sqrt{n} {\dbinom{2n}{n}} 4^{-n} \\
= \bigg(\frac{2n+1}{2\sqrt{n}\sqrt{n+1}} - 1\bigg)\sqrt{n} {\dbinom{2n}{n}} 4^{-n}
= \frac{\sqrt{n} 4^{-n}}{2\sqrt{n}\sqrt{n+1}(\sqrt{n+1}+\sqrt{n})^2}{\dbinom{2n}{n}} = O(n^{-2}).
\end{multline*}

\noindent Hence
\[\dbinom{2k}{k}4^{-k} = \frac{1}{\sqrt{\pi}\sqrt{k}} + O(k^{-\frac32}). \]

\noindent Together with the asymptotics of $F_k(2; g, h)$ {in \eqref{asymptotics}} and $h = \frac{1}{\gamma^2}$, we deduce that
\begin{equation}\label{tail muJS}
\mu_{\text{JS}}(k)  \sim C_{\text{JS}} \begin{cases}
  k^{-\frac52}, & \text{if } g=\frac{h}{2}, \\
  k^{-\frac52}\ln k, & \text{if }g< \frac{h}{2},
\end{cases}
\end{equation}
for some constant $C_{\text{JS}}>0$.

\section{Tail distribution of the Markov chain ${\tt S}$ in the case $n=2$}\label{s:tail distribution S}
{We fix $n=2$ throughout this section. We first gather some preliminary estimates on the tail asymptotics of $T_p$ and $L_p$ (recall \eqref{eq: def Tp and Lp}). The following statement is an extension of \cite[Lemma 5]{chen2020perimeter}. We will use the function $f(p)$ defined in \eqref{def: fp}.}
{
\begin{lem}\label{lem: exp tail T_p small}
There exists $c>0$ such that
\begin{equation}
\mathbb{P}(T_p\leq \varepsilon f(p))\leq \mathbb{P}(L_p\leq \varepsilon f(p))\leq 
\begin{cases}
    \exp(-c\varepsilon^{-2}), & \text{ \textnormal{in Case~\ref{caseA}},}\\
    \exp(-c\varepsilon^{-2}(|\ln\varepsilon|+1)^{-2}), & \text{ \textnormal{in Case~\ref{caseB}}.}
\end{cases}
\end{equation}
\end{lem}
}

\begin{proof}
{
We feel free to omit the proof as it is the same as in \cite[Lemma 5]{chen2020perimeter}. The only difference is that we are now in the regime where either $\mathbb{P}(X_1\geq x)\sim Cx^{-3/2}$ (in Case~\ref{caseA}) or $\mathbb{P}(X_1\geq x)\sim Cx^{-3/2}\ln x$ (in Case~\ref{caseB}), but the standard Abelian theorems still apply in both cases.
}
\end{proof}

\begin{lem}\label{lem: Tp/Lp}
Let $r > 1$. There exists some constant $C>0$ such that for all $p\ge 1$
\begin{equation}
\mathbb{E}\left[\abs{\mu_{\textnormal{JS}}(0)\frac{T_p-1}{1+L_p}-1}^r\right] \leq {Cp^{-2r}f(p)^r}. 
\end{equation}
\end{lem}

\begin{proof}
Set {$F(p) = p^{-2}f(p)$}. 
{
Note that since $\alpha>1$, for $p\ge 2$ large enough, we have $F(p)\ge \frac{1}{p-1}$ and thus
\[
\abs{\frac{1+L_p}{T_p-1} - \mu_{\textnormal{JS}}(0)}
\le 
\abs{\frac{L_p}{T_p} - \mu_{\textnormal{JS}}(0)}+ \frac{2}{p-1}
\le
\abs{\frac{L_p}{T_p} - \mu_{\textnormal{JS}}(0)} + 2F(p).
\]
Therefore, by a union bound,
\begin{align*}
&\mathbb{P}\left(\abs{\frac{1+L_p}{T_p-1} - \mu_{\textnormal{JS}}(0)}\geq 3F(p)\right) \\
&\leq  \mathbb{P}(T_p<p^{5/4})+
\mathbb{P}\left(\abs{\frac{L_p}{T_p} - \mu_{\textnormal{JS}}(0)}\geq F(p),T_p\geq p^{5/4} \right)\\
&\leq \mathbb{P}(T_p<p^{5/4})+
\sum^{\infty}_{n=p^{5/4}} \mathbb{P}\left(\abs{\frac1{n} \sum^n_{i=1}\mathds{1}_{\{X_i = -1\}} -\mu_{\text{JS}}(0)} \geq F(p)\right).
\end{align*}
}
By Hoeffding's inequality, {the second term can be further bounded as
\[
\sum^{\infty}_{n=p^{5/4}} \mathbb{P}\left(\abs{\frac1{n} \sum^n_{i=1}\mathds{1}_{\{X_i = -1\}} -\mu_{\text{JS}}(0)} \geq F(p)\right)
\leq \sum^{\infty}_{n=p^{5/4}} 2\mathrm{e}^{-2nF(p)^2}
\leq 4F(p)^{-2}\mathrm{e}^{-2f(p)^2p^{-11/4}}.
\]
}
{
We conclude by definition of $f(p)$ in \eqref{def: fp} and \cref{lem: exp tail T_p small} that there exists a constant $C>0$ such that for all $p$,}
\begin{equation*}
\mathbb{P}\left(\abs{\frac{1+L_p}{T_p-1} - \mu_{\textnormal{JS}}(0)}\geq 3{F(p)}\right) \leq C\mathrm{e}^{-2p^{{1/8}}}.
\end{equation*}

\noindent By \cref{eq: exponential tail Tp/Lp}, for any $r'>1$,  
\[\sup_{p\ge 1} \mathbb{E}\left[ \bigg| \mu_{\textnormal{JS}}(0) \frac{T_p-1}{1+L_p}-1 \bigg|^{r'}\right]<\infty. \]

\noindent {The Cauchy-Schwarz inequality} implies that {there is a constant $C>0$ such that for all $p$,}
\[\mathbb{E}\left[ \bigg|\mu_{\textnormal{JS}}(0)\frac{T_p-1}{1+L_p}-1\bigg|^r\mathds{1}_{\{|{(1+L_p)/(T_p-1)} - {\mu_{\textnormal{JS}}(0)}|\geq 3{F(p)}\}}\right]\leq C\mathrm{e}^{-p^{{1/8}}}.\]

\noindent We finally observe that {on the event that $|{(1+L_p)/(T_p-1)} - \mu_{\textnormal{JS}}(0)|< 3F(p)$,
\[
\abs{\mu_{\textnormal{JS}}(0)\frac{T_p-1}{1+L_p}-1}
<
\frac{3 F(p)}{\mu_{\mathrm{JS}}(0)-3F(p)}.
\]
}
{Hence for $p$ large enough so that $3F(p)<\mu_{\mathrm{JS}}(0)/2$, } 
\[\mathbb{E}\left[ \abs{\mu_{\textnormal{JS}}(0)\frac{T_p-1}{1+L_p}-1}^r\mathds{1}_{\{|{(1+L_p)/(T_p-1)} - {\mu_{\textnormal{JS}}(0)}|< 3{F(p)}\}}\right]\leq { \frac{3^r F(p)^{r}}{(\mu_{\mathrm{JS}}(0)-3F(p))^r}\leq 6^r\mu_{\mathrm{JS}}(0)^{-r}F(p)^r},\]

\noindent which completes the proof.

\end{proof}

{We now extend the convergence of the perimeter cascade to the general case.}
\begin{prop}[{Extension of \cite[Proposition 3]{chen2020perimeter}}]
\label{prop: CCM convergence gen 1}
For any $\varphi: \ell^{\infty}(\mathbb{N}^*) \to \mathbb{R}$ bounded and continuous, we have {as $p\to\infty$,}
\[\mathbb{E}^{(p)}[\varphi(p^{-1}\chi^{(p)}(i), i\geq 1)]\longrightarrow\mathbb{E}[\varphi(Z_{\alpha}(i), i\geq 1)].\]
\end{prop}

\begin{proof}
Recall that faces of the gasket correspond to loops in $(\frak{q}, \boldsymbol{\ell})$, except for some faces of degree $4$. By \eqref{eq: key formula maps to RW}, we thus have as $p\to \infty$,
\[\mathbb{E}^{(p)}[\varphi(p^{-1}\chi^{(p)}(i), i\geq 1)] = \frac{1}{\mathbb{E}[1/L_p]}\mathbb{E}\bigg[\frac{\varphi(p^{-1} \mathbf{X}^{(p)})}{1+L_p}\bigg] + o(1).\]

\noindent {Recall the function $f(p)$ defined in \eqref{def: fp}.} The standard invariance principle gives that 
\[\bigg(\frac{S_{t f(n)}}{n}\bigg)_{t\geq 0} \xrightarrow{\text{(d)}} (\zeta_t)_{t\geq 0}. \]

\noindent Hence 
\[f(p)^{-1} T_p \xrightarrow{\text{(d)}} \tau, \quad f(p)^{-1} L_p \xrightarrow{\text{(d)}}{\mu_{\text{JS}}(0)}\tau \quad  \textnormal{and} \quad p^{-1}\mathbf{X}^{(p)} \xrightarrow{\text{(d)}} (\Delta \zeta_t)_{t\leq \tau}. \]

\noindent 
{\cref{eq: L1 convergence Lp}} 
indicates that $\mathbb{E}[f(p)/L_p] \to \mu_{\textnormal{JS}}(0)^{-1}\mathbb{E}[1/\tau]$ and 
\[f(p)\mathbb{E}\left[\frac{\varphi(p^{-1} \mathbf{X}^{(p)})}{1+L_p}\right]\to \mu_{\textnormal{JS}}(0)^{-1} \mathbb{E}\left[\frac{\varphi(\Delta\zeta_t, t\leq \tau)}{\tau}\right]. \]

\noindent We conclude the desired convergence by definition of $(Z_{\alpha}(i), i\leq 1)$. 
\end{proof}

Now we turn to the speed of convergence of the Markov Chain $\tt S$ relative to the $O(2)$ model, {as defined in \cref{sec: markov chain}.}
\begin{lem}\label{l:convergence speed S1}
There exists a constant $C>0$ such that, for all large enough $p$, and for all $n\geq 0$,
\begin{equation} \label{eq: distribution fn n=2}
\abs{\mathbb{P}_p({\tt S_1} \leq n) - \frac{2}{\pi}\arctan\sqrt\frac{n}{p}} \leq \frac{C}{\sqrt{p}}.
\end{equation}
\end{lem}

\begin{proof}
In this proof the $O(\cdot)$ estimates will always be independent of $n$. Note that in the gasket $\frak g$, a face of degree $2k$ when $k\ne 2$ will correspond to a loop. Also note that all faces of degree $4$ of the gasket will independently have probability $a = \frac{g}{\hat{g}_2}$ to be a plain quadrangle and $1-a$ to stem from a loop. Let {$g(k) = \overline{V}(k)(1-a\mathds{1}_{\{k=2\}})$}. By definition of ${\tt S}$, (\ref{eq: key formula maps to RW}) and (\ref{eq: volume ratio}), for any $n\geq 3$, 
\begin{eqnarray*}
{\tt P}_p({\tt S}_1\leq n) 
&=& \frac{1}{\overline{V}(p)}\mathbb{E}^{(p)}\left[\sum^{\infty}_{i=1} \overline{V}(\chi^{(p)}(i))\mathds{1}_{\{\chi^{(p)}(i)\leq n\}}\right] + \frac{1}{\overline{V}(p)}\mathbb{E}^{(p)}\left[|\frak g| \right]\\
&=& \frac{1}{\overline{V}(p)} \frac{1}{\mathbb{E}[\frac{1}{1+L_p}]} \mathbb{E}\bigg[\frac{1}{1+L_p} \sum_{i=1}^{T_p} g(X_i + 1)\mathds{1}_{\{X_i + 1\leq n\}}\bigg]+ O(p^{-1/2}).
\end{eqnarray*}

\noindent By the same arguments on the faces of degree $4$ and the gasket decomposition we have 
\begin{eqnarray*}
\overline{V}(p) &=& \mathbb{E}^{(p)}[|\frak g|] + \mathbb{E}^{(p)}\bigg[\sum^{\infty}_{i=1}\overline{V}(\chi^{(p)}(i))\bigg]\\
&=& \frac{1}{\mathbb{E}[\frac1{1+L_p}]} + \frac{1}{\mathbb{E}[\frac1{1+L_p}]}\mathbb{E}\bigg[\frac{1}{1+L_p} \sum_{i=1}^{T_p} g(X_i + 1)\bigg]. 
\end{eqnarray*}

\noindent We may then replace $\overline{V}(p)\mathbb{E}[\frac{1}{1+L_p}]$ using the display above:
\[{\tt P}_p({\tt S}_1 \leq n) = \frac{\mathbb{E}\left[\frac{1}{1+L_p} \sum_{i=1}^{T_p} g(X_i+1)\mathds{1}_{\{X_i+1} \leq n\}\right]}{1+\mathbb{E}\left[\frac{1}{1+L_p} \sum_{i=1}^{T_p} g(X_i + 1)\right]} + O(p^{-1/2}).\]

\noindent We will see that both expectations are of order $\sqrt{p}$ later. 

To apply \cref{prop: kemperman}, we first need to replace $1/(L_p+1)$ by $1/(T_p-1)$ inside the expectation. By H\"older's inequality with coefficients $\frac1r+\frac1{r'} = 1$, we have 
\begin{eqnarray*}
&\abs{\mathbb{E}\left[\frac{1}{1+L_p} \sum_{i=1}^{T_p} g(X_i + 1)\right] - \mathbb{E}\left[\frac{1}{(T_p-1)\mu_{\textnormal{JS}}(0)} \sum_{i=1}^{T_p} g(X_i + 1)\right] } \\
\leq &\mathbb{E}\left[\abs{\mu_{\textnormal{JS}}(0)\frac{T_p-1}{1+L_p}-1}^r\right]^{\frac1r}\mathbb{E}\Big[\Big(\frac{1}{(T_p - 1)\mu_{\textnormal{JS}}(0)}\sum^{T_p}_{i=1} g(X_i+1)\Big)^{r'}\Big]^\frac{1}{r'}. 
\end{eqnarray*}

\noindent Fix $r>1$ large enough such that $r'-1<\gamma_0$ in \cref{thm: tail biggins}. {Taking another set of coefficients $\frac{1}{s}+\frac{1}{s'} = 1$ with $s'r'-1<\gamma_0$, by H\"older's inequality again, there exists a constant $C>0$ such that, for all $p$,
\[\mathbb{E}\bigg[\bigg(\frac{1}{(T_p - 1)\mu_{\textnormal{JS}}(0)}\sum^{T_p}_{i=1} g(X_i+1)\bigg)^{r'}\bigg]^\frac{1}{r'}\leq C\mathbb{E}\bigg[\frac{1}{(T_p-1)^{sr'-s+1}}\bigg]^{\frac{1}{sr'}}\mathbb{E}\bigg[\frac{1}{T_p - 1}\bigg(\sum^{T_p}_{i=1} g(X_i+1)\bigg)^{s'r'}\bigg]^\frac{1}{s'r'}. \]

\noindent By \eqref{eq:tail biggins proof eta}, the second term is bounded by $Cp^2f(p)^{-1/s'r'}$. Since $f(p)/T_p\to 1/\tau$ in any $L^r$ for $r>1$ (see \cref{sec: estimtate r.w.}), the first term is bounded by $Cf(p)^{-(1-1/r'+1/sr')}$. We conclude that 
\[\mathbb{E}\bigg[\bigg(\frac{1}{(T_p - 1)\mu_{\textnormal{JS}}(0)}\sum^{T_p}_{i=1} g(X_i+1)\bigg)^{r'}\bigg]^\frac{1}{r'}\leq Cp^2f(p)^{-1}. \]

\noindent Moreover, using }\cref{lem: Tp/Lp}, 
\[\mathbb{E}\left[\abs{\mu_{\textnormal{JS}}(0)\frac{T_p-1}{1+L_p}-1}^r\right]^{\frac1r} \leq Cp^{-2}f(p). \]
{
\noindent Hence we have shown that} 
\[\bigg|\mathbb{E}\bigg[\frac{1}{1+L_p} \sum_{i=1}^{T_p} g(X_i + 1)\bigg] - \mathbb{E}\bigg[\frac{1}{(T_p-1)\mu_{\textnormal{JS}}(0)} \sum_{i=1}^{T_p} g(X_i + 1)\bigg] \bigg| \leq {C}. \]

\noindent The inequality still holds when adding the indicator $\mathds{1}_{\{X_i+1\leq n\}}$, by the same argument. We have now replaced $1/(L_p+1)$ by $1/(T_p-1)$ inside the expectation {with an error term $O(1)$.} By \cref{prop: kemperman}, 
\[
 \mathbb{E}\bigg[\frac{1}{T_p-1} \sum_{i=1}^{T_p} g(X_i+1)\bigg]
=
\bb E\bigg[ \frac{p}{p+X_1} g(X_1+1)]\bigg].
\]

\noindent It remains to show that 
\[
\Bigg|
\frac{\mathbb{E}\Big[\frac{p}{p + X_1}g(X_1 + 1)\mathds{1}_{\{X_i + 1\leq n\}}\Big] {+O(1)}}
{1+\mathbb{E}\Big[\frac{p}{p+X_1} g(X_1 + 1)\Big]{+O(1)}}
-
\frac{2}{\pi}\arctan\sqrt{\frac{n}{p}}
\Bigg|
\leq \frac{C}{\sqrt{p}}. 
\]

\noindent Recall from \cref{s:construction O(2)} that $\overline{V}(p) = \frac{\gamma^{2p}}{F_p}$ with $\gamma=1/\sqrt{h}$, and $\mu_{\mathrm{JS}}(k) = \dbinom{2k}{k}4^{-k+1}h^{k+1}F_k$. Therefore, using $\dbinom{2k}{k}4^{-k} = \frac{1}{\sqrt{\pi}\sqrt{k}} + O(k^{-\frac32})$ again,
\begin{equation} \label{eq: vol*mu}
\overline{V}(k)\mu_{\mathrm{JS}}(k) = h \cdot \dbinom{2k}{k} 4^{-k+1}  = \frac{4h}{\sqrt{\pi}\sqrt{k}} + O(k^{-3/2}). 
\end{equation}

\noindent From \eqref{eq: vol*mu}, we get to
\begin{eqnarray*}
& & \bb E\bigg[ \frac{p}{p+X_1} g(X_1 + 1)\mathds{1}_{\{X_i + 1 \leq n\}}\bigg] = \sum^{n}_{k=3} \overline{V}(k)\mu_{\textnormal{JS}}(k)\frac{p}{p+k-1} + O(1)\\
&=& 4h\sum^{n}_{k=3} \frac{1}{\sqrt{\pi}\sqrt{k}}\frac{p}{p+k-1} + O(1) = \sqrt{p}\frac{4h}{\sqrt{\pi}}\frac{1}{p}\sum^{n}_{k=2}\sqrt{\frac{p}{k}}\frac{p}{p+k-1} + O(1). 
\end{eqnarray*}

\noindent For each $k$, 
\[\int^{\frac{k+1}{p}}_{\frac{k}{p}}\frac{\mathrm{d}x}{\sqrt{x}(1+x)}\leq \frac{1}{p}\sqrt{\frac{p}{k}}\frac{1}{1 + \frac{k-1}{p}}\leq \int^{\frac{k-1}{p}}_{\frac{k-2}{p}}\frac{\mathrm{d}x}{\sqrt{x}(1+x)}.\]

\noindent Summing over $2\leq k\leq n-1$ (or $2\leq k\leq n$) leads to 
\[\int^{\frac{n}{p}}_{\frac{2}{p}} \frac{\mathrm{d} x}{\sqrt{x}(1+x)} + \frac{1}{\sqrt{np}}\frac{p}{p+n-1} \leq \sum^{n}_{k=2}\frac{1}{p}\sqrt{\frac{p}{k}}\frac{1}{1+\frac{k-1}{p}}
\leq \int^{\frac{n-1}{p}}_{0} \frac{\mathrm{d}x}{\sqrt{x}(1+x)}.\]

\noindent The difference between $\int^{\frac{n}{p}}_{0} \frac{\mathrm{d} x}{\sqrt{x}(1+x)}$ and the series is of order $\frac{1}{\sqrt{p}}$. Thus we conclude that
\[\bb E\bigg[ \frac{p}{p+X_1} g(X_1 + 1)\mathds{1}_{\{X_i + 1\leq n\}}\bigg] = \frac{4h}{\sqrt{\pi}}\sqrt{p}\int^{\frac{n}{p}}_{0}\frac{\mathrm{d}x}{\sqrt{x}(1+x)} + O(1) 
= \frac{8h}{\sqrt{\pi}} \sqrt{p} \arctan \sqrt{\frac{n}{p}} + O(1).\]
The same argument leads to
\[\mathbb{E}\bigg[\frac{p}{p+X_1} g(X_1 + 1)\bigg] = \frac{4h}{\sqrt{\pi}}\sqrt{p}\int^{\infty}_0 \frac{\mathrm{d}x}{\sqrt{x}(1+x)} + O(1)
= \frac{\pi}{2} \frac{8h}{\sqrt{\pi}} \sqrt{p}  + O(1).\]
\noindent This completes the proof. 

\end{proof}

%\bibliographystyle{plain}
%\bibliography{../ml}

%-------------------------------------------------------------------%
%               BIBLIOGRAPHY
%-------------------------------------------------------------------%

\bibliographystyle{alpha}
\bibliography{biblio}

@article{eynard1995exact,
  title={Exact solution of the {O}(n) model on a random lattice},
  author={Eynard, B. and Kristjansen, C.},
  journal={Nuclear {P}hysics {B}},
  volume={455},
  number={3},
  pages={577--618},
  year={1995}
}

@article{eynard1992n,
  title={The {O}(n) model on a random surface: critical points and large-order behaviour},
  author={Eynard, B. and Zinn-Justin, J.},
  journal={Nuclear Physics B},
  volume={386},
  number={3},
  pages={558--591},
  year={1992}
}

@article{kostov1992multicritical,
  title={Multicritical phases of the {O}(n) model on a random lattice},
  author={Kostov, I. K. and Staudacher, M.},
  journal={Nuclear {P}hysics {B}},
  volume={384},
  number={3},
  pages={459--483},
  year={1992}
}

@article{kostov1989n,
  title={O(n) vector model on a planar random lattice: spectrum of anomalous dimensions},
  author={Kostov, I. K},
  journal={Modern {P}hysics {L}etters {A}},
  volume={4},
  number={03},
  pages={217--226},
  year={1989}
}

@article{di19952d,
  title={2{D} gravity and random matrices},
  author={Di Francesco, P. and Ginsparg, P. and Zinn-Justin, J.},
  journal={Physics {R}eports},
  volume={254},
  number={1-2},
  pages={1--133},
  year={1995}
}

@article{le2005randomtrees,
  title={Random trees and applications},
  author={Le Gall, J.-F.},
  journal={Probability {S}urveys},
  volume={2},
  pages={245--311},
  year={2005}
}

@article{chauvin1991prod,
  title={Product Martingales and Stopping Lines for Branching {B}rownian Motion},
  author={Chauvin, B.},
  journal={The {A}nnals of {P}robability},
  volume={19},
  number={3},
  pages={1195--1205},
  year={1991}
}

@article{jagers1989general,
  title={General branching processes as Markov fields},
  author={Jagers, P.},
  journal={Stochastic {P}rocesses and their {A}pplications},
  volume={32},
  number={2},
  pages={183--212},
  year={1989}
}

@article{AHS24,
  title={Boundedness of discounted tree sums},
  author={Aïdékon, E. and Hu, Y. and Shi, Z.},
  journal={arXiv:2409.01048},
  note={\url{https://arxiv.org/abs/2409.01048}},
  year={2024}
}

@book{Feller1971,
  author = {Feller, W.},
  publisher = {Wiley, New York},
  title = {An Introduction to Probability Theory and its
                 Applications},
  volume = {II},
  year = {1971}
}

@article{doney1982exact,
  title={On the exact asymptotic behaviour of the distribution of ladder epochs},
  author={Doney, R. A.},
  journal={Stochastic Processes and their Applications},
  volume={12},
  number={2},
  pages={203--214},
  year={1982}
}

@article{tanaka,
author = {Tanaka, H.},
title = {{Time Reversal of Random Walks in One-Dimension}},
volume = {12},
journal = {Tokyo Journal of Mathematics},
number = {1},
publisher = {Publication Committee for the Tokyo Journal of Mathematics},
pages = {159 -- 174},
year = {1989},
doi = {10.3836/tjm/1270133555},
URL = {https://doi.org/10.3836/tjm/1270133555}
}

@article{BigginsJ.D.2005Fpot,
  journal = {Electronic Journal of Probability},
  pages = {609--631},
  volume = {10},
  publisher = {Univ Washington, Dept Mathematics},
  number = {none},
  year = {2005},
  title = {Fixed points of the smoothing transform: The boundary case},
  author = {Biggins, J. D. and Kyprianou, A. E.},
}

@article{AfanasyevV.I.2005CfBP,
journal = {The Annals of Probability},
pages = {645--673},
volume = {33},
publisher = {Institute of Mathematical Statistics},
number = {2},
year = {2005},
title = {Criticality for Branching Processes in Random Environment},
author = {Afanasyev, V. I. and Geiger, J. and Kersting, G. and Vatutin, V. A.},
}

@article{AlsmeyerGerold2022ASMt,
  journal = {Journal of Theoretical Probability},
  pages = {2569--2599},
  volume = {35},
  publisher = {Springer US},
  number = {4},
  year = {2022},
  title = {A Simple Method to Find All Solutions to the Functional Equation of the Smoothing Transform},
  author = {Alsmeyer, G. and Mallein, B.},
}

@article{ChenXinxin2015Anas, 
  title = {A necessary and sufficient condition for the nontrivial limit of the derivative martingale in a branching random walk}, 
  journal = {Advances in Applied Probability},
  pages = {741--760},
  volume = {47},
  publisher = {Cambridge University Press},
  number = {3},
  year = {2015},
  author = {Chen, X.},
}

@book{pitman2006combinatorial,
  title={Combinatorial stochastic processes: {E}cole d'{\'e}t{\'e} de probabilit{\'e}s de {S}aint-{F}lour {XXXII}},
  author={Pitman, J.},
  year={2006},
  note={With a foreword by J. Picard},
  publisher={Springer}
}

@article{kager2004guide,
  title={A guide to stochastic {L}{\"o}wner evolution and its applications},
  author={Kager, W. and Nienhuis, B.},
  journal={Journal of statistical physics},
  volume={115},
  pages={1149--1229},
  year={2004},
  publisher={Springer}
}

@article{lyons1997simple,
  title={A simple path to {B}iggins’ martingale convergence for branching random walk},
  author={Lyons, R.},
  journal={Classical and modern branching processes},
  pages={217--221},
  year={1997},
  publisher={Springer}
}

@book{shi2016branching,
  title={Branching {R}andom {W}alks: {\'E}cole d'{\'E}t{\'e} de {P}robabilit{\'e}s de {S}aint-{F}lour {XLII}--2012},
  author={Shi, Z.},
  volume={2151},
  year={2016},
  publisher={Springer}
}

@article{biggins2004measure,
  title={Measure change in multitype branching},
  author={Biggins, J. D.  and Kyprianou, A. E.},
  journal={Advances in {A}pplied {P}robability},
  volume={36},
  number={2},
  pages={544--581},
  year={2004},
  publisher={Cambridge University Press}
}

@article{kahane1976certaines,
  title={Sur certaines martingales de {B}enoît {M}andelbrot},
  author={Kahane, J.-P. and Peyri{\`e}re, J.},
  journal={Advances in {M}athematics},
  volume={22},
  number={2},
  pages={131--145},
  year={1976}
}

@book{lawler-limic,
  title={Random walk: a modern introduction},
  author={Lawler, G. and Limic, V.},
  volume={123},
  year={2010},
  publisher={Cambridge University Press}
}

@book{billingsley2013convergence,
  title={Convergence of probability measures},
  author={Billingsley, P.},
  year={2013},
  publisher={John Wiley \& Sons}
}

@Article{lalleysellke,
 Author = {Lalley, S. P. and Sellke, T.},
 Title = {A conditional limit theorem for the frontier of a branching {Brownian} motion},
 Journal = {The Annals of Probability},
 ISSN = {0091-1798},
 Volume = {15},
 Pages = {1052--1061},
 Year = {1987},
 Language = {English},
 DOI = {10.1214/aop/1176992080},
 Keywords = {60J60,60F05},
 zbMATH = {4009482},
 Zbl = {0622.60085}
}

@Misc{neveuderivative,
 Author = {Neveu, J.},
 Title = {Multiplicative martingales for spatial branching processes},
 Year = {1988},
 Language = {English},
 HowPublished = {Stochastic processes, {Semin}., {Princeton}/{New} {Jersey} 1987, {Prog}. {Probab}. {Stat}. 15, 223-242 (1988).},
 Keywords = {60J80,60G44,60J60},
 zbMATH = {4064209},
 Zbl = {0652.60089}
}

@Article{kyprianouKPP,
 Author = {Kyprianou, A. E.},
 Title = {Travelling wave solutions to the {K}-{P}-{P} equation: alternatives to {Simon} {Harris}' probabilistic analysis.},
 Journal = {Annales de l'Institut Henri Poincar{\'e} D},
 ISSN = {0246-0203},
 Volume = {40},
 Number = {1},
 Pages = {53--72},
 Year = {2004},
 Language = {English},
 DOI = {10.1016/S0246-0203(03)00055-4},
 Keywords = {60J80,60J65},
 zbMATH = {2057146},
 Zbl = {1042.60057}
}

@Article{Liuderivative,
 Author = {Liu, Q.},
 Title = {On generalized multiplicative cascades},
 Journal = {Stochastic Processes and their Applications},
 ISSN = {0304-4149},
 Volume = {86},
 Number = {2},
 Pages = {263--286},
 Year = {2000},
 Language = {English},
 DOI = {10.1016/S0304-4149(99)00097-6},
 Keywords = {60J80,60G57,28A78,28A80,60G42},
 zbMATH = {2000172},
 Zbl = {1028.60087}
}

@Article{Kyprianouderivative,
 Author = {Kyprianou, A. E.},
 Title = {Slow variation and uniqueness of solutions to the functional equation in the branching random walk},
 Journal = {Journal Appl. Probab.},
 ISSN = {0021-9002},
 Volume = {35},
 Number = {4},
 Pages = {795--801},
 Year = {1998},
 Language = {English},
 DOI = {10.1239/jap/1032438375},
 Keywords = {60J80},
 URL = {semanticscholar.org/paper/8f1263b57f75c2d737a62da60ad9beeb18333cea},
 zbMATH = {1295284},
 Zbl = {0930.60066}
}

@Article{aidekonminimum,
 Author = {A{\"{\i}}d{\'e}kon, E.},
 Title = {Convergence in law of the minimum of a branching random walk},
 Journal = {The Annals of Probability},
 ISSN = {0091-1798},
 Volume = {41},
 Number = {3A},
 Pages = {1362--1426},
 Year = {2013},
 Language = {English},
 DOI = {10.1214/12-AOP750},
 Keywords = {60J80,60F05},
 zbMATH = {6216112},
 Zbl = {1285.60086}
}

@Article{Buraczewskitailderivative,
 Author = {Buraczewski, D. and Iksanov, A. and Mallein, B.},
 Title = {On the derivative martingale in a branching random walk},
 Journal = {The Annals of Probability},
 ISSN = {0091-1798},
 Volume = {49},
 Number = {3},
 Pages = {1164--1204},
 Year = {2021},
 Language = {English},
 DOI = {10.1214/20-AOP1474},
 Keywords = {60G50,60J80,60F05,60G42},
 zbMATH = {7367505},
 Zbl = {1504.60073}
}

@article{curien2019peeling,
  title={Peeling random planar maps},
  author={Curien, N.},
  journal={Saint-Flour lecture notes},
  year={2019}
}

@article{angel2003growth,
  title={Growth and percolation on the uniform infinite planar triangulation},
  author={Angel, O.},
  journal={Geometric And {F}unctional {A}nalysis},
  volume={13},
  pages={935--974},
  year={2003},
  publisher={Springer}
}

@article{curien2017scaling,
  title={Scaling limits for the peeling process on random maps},
  author={Curien, N. and Le Gall, J.-F.},
  journal={Annales de l’{I}nstitut {H}enri {P}oincar{\'e} D},
  volume={53},
  number={1},
  pages={322--357},
  year={2017}
}

@article{tutte1962census,
  title={A census of planar triangulations},
  author={Tutte, W.},
  journal={Canadian {J}ournal of {M}athematics},
  volume={14},
  pages={21--38},
  year={1962},
  publisher={Cambridge University Press}
}

@article{borot2011recursive,
  title={A recursive approach to the {O}(n) model on random maps via nested loops},
  author={Borot, G. and Bouttier, J. and Guitter, E.},
  journal={Journal of Physics A: Mathematical and Theoretical},
  volume={45},
  number={4},
  year={2011}
}

@article{chen2020perimeter,
  title={The perimeter cascade in critical {B}oltzmann quadrangulations decorated by an {O}(n) loop model},
  author={Chen, L. and Curien, N. and Maillard, P.},
  journal={Annales de l’Institut Henri Poincar{\'e} D},
  volume={7},
  number={4},
  pages={535--584},
  year={2020}
}

@article{watabiki1995construction,
  title={Construction of non-critical string field theory by transfer matrix formalism in dynamical triangulation},
  author={Watabiki, Y.},
  journal={Nuclear {P}hysics {B}},
  volume={441},
  number={1--2},
  pages={119--163},
  year={1995},
  publisher={Elsevier}
}

@article{budd2017geometry,
author = {Budd, T.  and Curien, N.},
title = {{Geometry of infinite planar maps with high degrees}},
volume = {22},
journal = {Electronic Journal of Probability},
pages = {1--37},
year = {2017}
}

@article{budd2016peeling,
  title={The Peeling Process of Infinite {B}oltzmann Planar Maps},
  author={Budd, T.},
  journal={The {E}lectronic {J}ournal of {C}ombinatorics},
  volume={23},
  number={1},
  pages={1--28},
  year={2016}
}

@article{budd2018infinite,
  title={Infinite random planar maps related to {C}auchy processes},
  author={Budd, T. and Curien, N. and Marzouk, C.},
  journal={Journal de l’{\'E}cole polytechnique-Math{\'e}matiques},
  volume={5},
  pages={749--791},
  year={2018}
}

@article{budd2018peeling,
  title={The peeling process on random planar maps coupled to an {O}(n) loop model (with an appendix by {L}inxiao {C}hen)},
  author={Budd, T.},
  journal={arXiv:1809.02012},
  note={\url{https://arxiv.org/abs/1809.02012}},
  year={2018}
}

@article{bertoin2018martingales,
  title={Martingales in self-similar growth-fragmentations and their connections with random planar maps},
  author={Bertoin, J. and Budd, T. and Curien, N. and Kortchemski, I.},
  journal={Probability Theory and Related Fields},
  volume={172},
  pages={663--724},
  year={2018},
  publisher={Springer}
}

@article{albenque2022geometric,
  title={Geometric properties of spin clusters in random triangulations coupled with an {I}sing model},
  author={Albenque, M. and M{\'e}nard, L.},
  journal={arXiv:2201.11922},
  note={\url{https://arxiv.org/abs/2201.11922}},
  year={2022}
}

@article{legall2011scaling,
author = {Le Gall, J.-F. and Miermont, G.},
title = {{Scaling limits of random planar maps with large faces}},
volume = {39},
journal = {The Annals of Probability},
number = {1},
pages = {1--69},
year = {2011}
}

@article{bouttier2004planar,
  title={Planar maps as Labeled Mobiles},
  author={Bouttier, J. and Di Francesco, P. and Guitter, E.},
  journal={The Electronic Journal of Combinatorics},
  pages={R69--R69},
  volume={11},
  number={1},
  year={2004}
}

@article{marckert2005invariance,
author = {Marckert, J.-F. and Miermont, G.},
year = {2005},
pages={1642--1705},
title = {Invariance principles for random bipartite planar maps},
volume = {35},
number={5},
journal = {The Annals of Probability}
}

@article{bettinelli2017compact,
  title={Compact brownian surfaces {I}: {B}rownian disks},
  author={Bettinelli, J. and Miermont, G.},
  journal={{Probability Theory and Related Fields}},
  volume={167},
  pages={555--614},
  year={2017},
  publisher={Springer}
}

@article{janson2015scaling,
author = {Janson,S. and Stef{\'a}nsson, S.O.},
title = {{Scaling limits of random planar maps with a unique large face}},
volume = {43},
journal = {The Annals of Probability},
number = {3},
pages = {1045 -- 1081},
year = {2015}
}

@article{curien2015percolation,
  title={Percolation on random triangulations and stable looptrees},
  author={Curien, N. and Kortchemski, I.},
  journal={Probability {T}heory and {R}elated {F}ields},
  volume={163},
  number={1-2},
  pages={303--337},
  year={2015},
  publisher={Springer}
}

@article{KammererEmmanuel2023Dotm,
  year = {2023},
  title = {Distances on the $\mathrm{CLE}_4$, critical {L}iouville quantum gravity and $3/2$--stable maps},
  author = {Kammerer, E.},
  journal = {arXiv:2311.08571},
  note = {\url{https://arxiv.org/pdf/2311.08571}},
}

@article{KammererEmmanuel2023Ol3m,
  year = {2023},
  title = {On large $3/2$-stable maps},
  author = {Kammerer, E.},
  journal = {arXiv:2304.01167},
  note = {\url{https://arxiv.org/pdf/2304.01167}},
}

@article{ang2021liouville,
  title={Liouville quantum gravity surfaces with boundary as matings of trees},
  author={Ang, M. and Gwynne, E.},
  journal={Annales de l’{I}nstitut {H}enri {P}oincar{\'e} D},
  volume={57},
  number={1},
  pages={1--53},
  year={2021}
}

@article{miller2017cle,
  title={{CLE} percolations},
  author={Miller, J. and Sheffield, S. and Werner, W.},
  journal={Forum of {M}athematics, {P}i},
  volume={5},
  year={2017},
  publisher={Cambridge {U}niversity {P}ress}
}

@article{aru2023brownian,
  title={Brownian half-plane excursion and critical {L}iouville quantum gravity},
  author={Aru, J. and Holden, N. and Powell, E. and Sun, X.},
  journal={Journal of the {L}ondon {M}athematical {S}ociety},
  volume={107},
  number={1},
  pages={441--509},
  year={2023},
  publisher={Wiley Online Library}
}

@article{aidekon2022growth,
  title={Growth-fragmentation process embedded in a planar {B}rownian excursion},
  author={A{\"\i}d{\'e}kon, E. and Da Silva, W.},
  journal={Probability {T}heory and {R}elated {F}ields},
  volume={183},
  number={1},
  pages={125--166},
  year={2022},
  publisher={Springer}
}

\end{document}